\def\kk{\mathbf k}
\newcommand{\g}{\mathfrak{g}}
\newcommand{\rr}{\mathfrak{r}}
\newcommand{\so}{\mathfrak{so}}
\newcommand{\ssp}{\mathfrak{sp}}
\newcommand{\ssl}{\mathfrak{sl}}
\def\Z{\mathbb Z}
\def \O{\mathbb O}
\def\half{\frac12}
\def\End{\mathrm{End}}
\def\Rad{\mathrm{Rad}}
\newcommand{\R}{\mathcal{R}}
\newcommand{\cJ}{J}
\newcommand{\I}{\mathcal{I}}
\newcommand{\Hom}{\operatorname{Hom}}
\newcommand{\rad}{\rm {rad}}
\newcommand{\cS}{\mathcal{S}}
\newcommand{\HH}{\ensuremath{\mathfrak{h}}}
\newcommand{\Jhalf}{\cJ{\rm-mod}_{\frac 12}}
\newcommand{\Jone}{\cJ{\rm-mod}_1}
\newcommand{\Jzero}{\cJ{\rm-mod}_0}
\newcommand{\Jmod}{\cJ{\rm-mod}}
\newtheorem{thm}{Theorem}[section]
\newtheorem{prop}[thm]{Proposition}
\newtheorem{lem}[thm]{Lemma}
\newtheorem{Cor}[thm]{Corollary}
\newtheorem{Rem}[thm]{Remark}
\newtheorem{ex}[thm]{Example}
\begin{document}

\title[Special modules over Jordan algebras]
{Special modules over Jordan algebras}

\author{Iryna Kashuba}
\address{Shenzhen International Center for Mathematics,
Southern University of Science and Technology, China}
          \email{kashuba@sustech.edu.cn}
\author{Vera Serganova}
\address{Department of Mathematics, University of California at Berkeley, Berkeley, CA 94720, USA}
          \email{serganov@math.berkeley.edu}
\begin{abstract}

In this paper we study special representations of finite-dimensional Jordan algebra $J$ 
whose $\Rad^2 J=0$. For each Jordan algebra $J$ of this class we consider its 
Tits-Kantor-Koecher construction $TKK(J)$ and then associate to the latter a quiver 
with relations $Q$ such that the category of representations 
of $Q$ is isomorphic to the category of special representations of $J$. 

\end{abstract}
\subjclass{14J10, 17C55, 17C10.}

\maketitle

\section{Introduction}

Jordan algebras arose in early 1930s from the search for an “exceptional” setting for quantum mechanics to replace  the Copenhagen model, where the physical observables are represented by infinite Hermitian matrices. Extracting their formal algebraic properties without reference to the underlying matrix algebra, Pascual Jordan  considered a commutative algebra $J$ which satisfies some partial associativity identity
\begin{equation}\label{jor_alg}
\begin{array}{c}
((a\circ a)\circ b)\circ a= (a\circ a)\circ (b\circ a), \quad a,b\in J.
\end{array}
\end{equation}
Such algebras now carry his name. Due to its origin one can obtain a Jordan algebra $A^+$ from  any associative algebra $A$ substituting the original product by symmetric product
\begin{equation}\label{special}
a\circ b=ab+ba \quad a,b\in A.
\end{equation} A Jordan algebra is called {\it special} if it can be embedded into some $A^+$, otherwise it is called 
{\it exceptional}. 

Special and exceptional algebras are quite different in nature thus the speciality problem became the cornerstone question in Jordan theory. It is reflected in the representation theory as well.
The formal definition of a (bi)module over a Jordan algebra $J$ due to Eilenberg says that it is a vector space 
$M$ endowed with a mapping $a\to \rho(a)$ of  $J$ to the associative algebra $ \operatorname{End}(M)$ such that the split extension algebra $J\oplus M$ is Jordan algebra as well. Then $M$ or equivalently  (bi)representation $\rho$ is characterized by a list of quite hefty identities \eqref{representation_relation} of degree three deduced from \eqref{jor_alg}. Denote by $\Jmod$ a category of Jordan modules over $J$.

Another natural source of mappings of Jordan algebra $J$ into associative algebra $\operatorname{End}(M)$, suggested by \eqref{special}, are Jordan homomorphism of $J$ into $\operatorname{End}(M)^+$.  
Interestingly for a Lie algebra $\g$ these two types of mappings coincide, since representations of $\g$ are given by Lie homomorphisms of $\g$ into 
$\operatorname{End}(M)^-$. The reason for this is that any Lie algebra $\g$ is special, as it can be embedded into $A^-$ for some associative algebra $A$. For Jordan algebras it is straightforward to check that any Jordan homomorphism $\sigma: J\to \operatorname{End}(M)^+$ satisfies  
\eqref{representation_relation}, thus $M$ has the structure of a $J$-module. We will call such module $M$ and representation $\sigma$ {\it special} or {\it one-sided}. 

For unital Jordan algebras one has another characterisation of special modules. Let $e$ be the identity element of $J$ and 
$\rho$ be a representation of $J$ corresponding to Jordan module $M$. Then 
relation \eqref{representation_relation} when all entries are equal to $e$,
$\rho(e)(\rho(e)-1)(2\rho(e)-1)=0$, implies existence of the Peirce decomposition 
\begin{equation}\label{decomposition-module}
M=M_0\oplus M_{\frac12}\oplus M_1,
\end{equation}
where $M_i=\{m_i\,|\, m_i\rho(e)=im_i\}$, $i=0,\frac12,1$.  Denote by $\Jhalf$ (respectively by $\Jone$) a category of modules over $J$ on which $e$ acts as $\frac12$ (respectively as $1$) and by $\Jzero$ a category of trivial modules over $J$. Then \eqref{decomposition-module} rewrites
as 
$$
\Jmod=\Jzero\oplus\Jhalf\oplus\Jone.
$$
Let $\rho(e)=0$ then applying \eqref{representation_relation}  for $b=c=e$ one obtains $\rho(a)=0$ for any $a\in J$,
thus $M_0$ is the trivial module. Let $\rho(e)=\frac12$ then \eqref{representation_relation} for $c=e$ implies that $\rho(a\circ b)=\rho(a)\rho(b)+\rho(b)\rho(a)$, therefore $\rho: J\to \operatorname{End}(M_{\frac12})$ is a special representation of $J$ in $M_{\frac12}$. 
More precisely $\Jhalf\oplus \Jzero$ is the category of all special modules over $J$. For each of these categories one can introduce the universal objects, associative algebras $U(J)$ and $U_i(J)$ with $U(J){\rm-mod}\simeq \cJ{\rm-mod}$ and $U_i(J){\rm-mod}\simeq \cJ{\rm-mod}_i$,
 $i=0,\frac12, 1$, see next Section for more details.

In \cite{Jac2}  N.~Jacobson proved that if $J$ is a finite dimensional 
semisimple Jordan algebra then both $\Jhalf$ and $\Jone$ are semisimple.  Moreover, if $J$ is a finite dimensional Jordan algebra then $U(J)$ is finite-dimensional as well.  These observations suggest that the representation theory of finite-dimensional Jordan algebras goes in parallel with the representation theory of finite-dimensional associative algebras, in particular, to study representations of non-semisimple algebras one can determine their representation type. Due to \cite{dr} all finite dimensional associative algebras can be
divided into the following three classes: algebras of {\it finite
type} which have only finite number of indecomposable
finite-dimensional modules,  algebras of {\it tame type} for which there  is a finite number of one-parameter families
of indecomposable modules in every dimension, and
algebras of {\it wild type}. A powerful method for determining the representation type of a certain abelian category  is to reduce the question to the category of representations of some pointed algebra, or equivalently, to a quiver with relations.

In \cite{KOSh} authors suggested to study representation type of Jordan algebra $J$ with $\operatorname{Rad}^2 J=0$, where $\operatorname{Rad} J$ is the Jacobson radical of $J$, an algebra which is in some sense the closest to being a semi-simple algebra. The analogous class of associative algebras was a starting point in classifying associative algebras of finite representation type.  A remarkable theorem of Gabriel provides a beautiful combinatorial characterization through the renown Dynkin diagrams  for algebras of finite type and the extended Dynkin diagrams for algebras of tame type, \cite{Ga2}.

It turns out that very few Jordan algebras (even with $\operatorname{Rad}^2 J=0$) have tame unital representation type. In \cite{KS} we show that there are no finite type quivers and only six tame quivers for Jordan algebras of Clifford type, and all  $J$ of Clifford type with  $\operatorname{Rad}^2 J\neq 0$
are wild. Moreover, we believe that there are only finitely many tame quivers for $U_1(J)$ in general.

For special representations of Jordan algebras the situation is quite the opposite. In \cite{KOSh} the authors considered the case when $J$ is a Jordan algebra of matrix type with $\operatorname{Rad}^2J=0$. It was shown that $\operatorname{Rad}^2 U_{\frac{1}{2}}(J)= 0$. Hence one can apply the Dynkin diagram criterium. In the general case, for example for Jordan algebras of Clifford type, $\operatorname{Rad}^2 J=0$ does not imply
$\operatorname{Rad}^2 U_{\frac{1}{2}}(J)\neq 0$. It appears that the algebras $U_{\frac12}(J)$ form a new beautiful class of associative algebras which is interesting beyond the representation theory of Jordan algebras.
In this paper we describe quivers with relations for $U_{\frac{1}{2}}(J)$ of any finite-dimensional Jordan algebra $J$ with $\operatorname{Rad}^2 \cJ=0$ and show that $U_{\frac{1}{2}}(J)$ is Koszul. In the forthcoming paper \cite{BKS} we  show that this class of algebras is a generalization of zigzag algebras and determine when $J$ has finite or tame representation type.

Our main tool is the Tits-Kantor-Koecher construction which allows us to reduce the problem to certain representations of Lie algebras. 
We developed this approach in~\cite{KS2} and review these results in Section~\ref{section-tits-kantor-koecher}.

\section{Jordan algebras and their
representations}\label{section-Jordan-algebra-representation}
We require the field $\,{\bf k}\,$ in addition to being
algebraically closed be of characteristic zero.
In this section we give some basics about finite-dimensional Jordan algebras and their representations,
finishing it with the description of Jordan algebras with $\Rad^2 J=0$.

Two classical examples of special Jordan algebras are the
following. If $(A,j)$ is an associative algebra with involution
$j$ then the set of symmetric elements $H(A,j)$ is a Jordan subalgebra of $A^+$. 
If $V$ is
a $\kk$-vector space $V$ of dimension greater then one equipped
with bilinear form $f(x,y)$, then $\cJ(V,f)=\kk\cdot 1+V$, where
$1$ is a unit and $v\circ u=f(x,y)\cdot1$ for $v,u\in V$ defines a {\it Jordan algebra of a bilinear form}. 
Denote by $C(V,f)$ the
Clifford algebra of $V$ relative to $f$, then
$\cJ(V,f)$ is special since it is a subalgebra of $C(V,f)^+$. 

\begin{thm}\label{albert-theorem}[\cite{jj}, VII.6.13, V.6.2] 
Any finite dimensional Jordan algebra $\cJ$ contains a
unique maximal nilpotent ideal $\R=\Rad \cJ$, which is called the radical of
$\cJ$. A semi-simple algebra $\cJ/\R$  is a direct some of simple
ideals $\I$, where $\I$ is
\begin{enumerate} \item basic field $\kk$,  \item
$\cJ(V,f)$ a Jordan algebra of non-degenerate bilinear form $f$
over $\kk$-vector space $V$,  \item {\it Hermitian algebra}
$H_n(C_i)=H(M_n(C_i,*),j)$, $n\geq 3$, where $(C_i,*)$ is a composition
algebra of dimension $i$, $i=1,2,4$, over $\kk$, and $j$ is an involution of $M_n(C)$
 $j(\{a_{ij}\})=\{({a_{ji}})^*\}$,  \item the
Albert algebra $Alb=H_3(\O)$, where $\O$ is $8$-dimensional
octonion algebra over $\kk$ and $j$ is involution introduced in (3).
\end{enumerate}
\end{thm}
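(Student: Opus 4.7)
The plan is to follow the classical approach, splitting the theorem into two essentially independent parts: (i) existence and uniqueness of the nilpotent radical, and (ii) the classification of finite-dimensional simple Jordan algebras over an algebraically closed field of characteristic zero. Both pieces are due to Albert and Jacobson, so the sketch below is really an outline of the principal ingredients rather than a self-contained argument.

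For part (i), I would begin by establishing the Jordan-algebraic analogue of the fact that a sum of nilpotent ideals is nilpotent. The subtlety compared with the associative case is that the linearized Jordan identity intertwines products nontrivially, so one cannot simply multiply ideals. The standard workaround is to introduce the Penico solvable radical---the sum of all solvable ideals---and then invoke Albert's theorem that in characteristic zero the solvable and the maximal nilpotent radicals coincide. The inductive step uses the linearization of \eqref{jor_alg} together with finite dimensionality to convert solvability into nilpotency. Uniqueness of the maximal nilpotent ideal is then immediate.

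For part (ii), after a Wedderburn-style decomposition of $\cJ/\R$ into simple ideals, the classification of a simple $\cJ$ proceeds via the Peirce decomposition with respect to a maximal family $\{e_1,\dots,e_n\}$ of orthogonal primitive idempotents,
\[
\cJ=\bigoplus_{i\leqs j}\cJ_{ij},\qquad \cJ_{ii}=\kk e_i,\qquad \cJ_{ij}\circ \cJ_{jk}\subseteq \cJ_{ik}.
\]
The integer $n$, called the capacity, dictates the outcome: capacity $1$ yields the ground field and case (1); capacity $2$ reduces to a Jordan algebra of a non-degenerate symmetric bilinear form on $\cJ_{12}$ and case (2); for capacity $n\geqs 3$, Jacobson's coordinatization theorem identifies $\cJ$ with $H_n(C,*)$ for some composition algebra $(C,*)$. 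The Hurwitz theorem over an algebraically closed $\kk$ of characteristic zero leaves exactly four possibilities $C=\kk,\,\kk\oplus\kk,\,M_2(\kk),\,\O$ of dimensions $1,2,4,8$, producing case (3), while the non-associativity of $\O$ forces $n=3$ and yields case (4).

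The genuinely hard step is the coordinatization theorem together with the rigidity at capacity $3$. One must show that the Peirce relations force the off-diagonal pieces $\cJ_{ij}$ to assemble coherently into a composition algebra with involution, and then rule out octonion coefficients for $n\geqs 4$ by combining Artin's theorem (any two elements of $\O$ generate an associative subalgebra) with an associator computation on triples of Peirce components $\cJ_{ij},\cJ_{jk},\cJ_{k\ell}$. In a presentation aimed at the present paper I would cite Hurwitz's theorem and Albert's non-coordinatization result as black boxes, since their proofs are unrelated to the representation-theoretic questions at hand, and focus the exposition on the structural output of Theorem~\ref{albert-theorem} that feeds into the later analysis of $\Rad^2\cJ=0$.
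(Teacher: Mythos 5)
The paper does not prove this theorem: it is stated with an explicit citation to Jacobson's monograph (\cite{jj}, VII.6.13, V.6.2) and is used as a structural black box throughout. There is therefore no internal proof to compare against, and the sensible question is only whether your outline is a correct summary of the classical Albert--Jacobson argument. It is. The two-part split (radical, then classification of simple algebras), the passage through the Penico solvable radical and the characteristic-zero coincidence of solvable and nilpotent radicals, the Wedderburn-type splitting of a semisimple Jordan algebra into simple ideals, the Peirce decomposition relative to a maximal orthogonal family of primitive idempotents, the case analysis on the capacity $n$, the coordinatization theorem for $n\geqs 3$, Hurwitz's classification of composition algebras over an algebraically closed field, and the exclusion of octonion coordinates for $n\geqs 4$ are exactly the ingredients of the standard proof.

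Two small precisions are worth recording. First, the identification $\cJ_{ii}=\kk e_i$ in your Peirce decomposition uses that $\kk$ is algebraically closed; over a general field the diagonal blocks are Jordan division algebras and the classification acquires extra cases. Second, the capacity-$2$ case genuinely sits outside the coordinatization machinery: one must argue directly that $\cJ_{12}$ carries a nondegenerate symmetric bilinear form and that this recovers $\cJ(V,f)$, rather than trying to extract a coordinate algebra as for $n\geqs 3$. Both points are consistent with the hypotheses of the paper (algebraically closed $\kk$, characteristic zero), so your sketch matches what the cited reference delivers.
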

Clearly in Theorem \ref{albert-theorem} algebras $(1)-(3)$ are special, as for $Alb$, in 
1934 Albert showed that it is exceptional.

Let $M$ be a $\kk$-vector space endowed with a pair of linear
mappings $\cJ\otimes_{\kk} M\to M$, $(a,m)\to a\cdot m$ and
$M\otimes_{\kk}\cJ\to M$, $(m,a)\to m\cdot a$, define an algebra
$\Omega=\cJ\oplus M$ with $\kk$-linear product
\begin{equation}
(a_1+m_1)\star(a_2+m_2)=a_1\circ a_2+a_1\cdot m_2+m_1\cdot a_2,
\end{equation}
where $a_1,a_2\in \cJ$, $m_1,m_2\in M$. Then $M$ is called a {\it
Jordan (bi)module} over $\cJ$ if $(\Omega,\star)$ is Jordan algebra.
Equivalently one can consider a mapping $\rho\,:\cJ\to
\End_{\kk}M$, $\rho(a)m=a\cdot m$, $a\in\cJ$, $m\in M$, which
satisfies the following identities $[\rho(a),\rho(a\circ a)]=0$ and 
\begin{equation}\label{representation_relation}
\rho(a)\rho(b)\rho(c)+\rho(c)\rho(b)\rho(a)+\rho((a\circ c)\circ b)=\rho(a)\rho(b\circ
c)+\rho(b)\rho(c\circ a)+\rho(c)\rho(a\circ b),
\end{equation}
$a,b,c\in J$. Such mapping is called a {\it (bi)representation} of $\cJ$. Analogously to the Lie algebra case we define the universal
multiplication envelope $U(J)$ as quotient of tensor algebra
$T\langle \rho(a), a\in\cJ\rangle$ by the ideal $I$ generated by
identities \eqref{representation_relation}. Since $\rho$ endows $M$ with the structure of $J$-module, the category of left
$U(J)$-modules $U(J)-{\rm mod}$ is isomorphic to the category $\Jmod$.

Recall that we call a $J$-module $M$ special (or one-sided) if $(a\circ b)\cdot m=a\cdot(b\cdot m)+b\cdot (a\cdot m)$, $a,b\in J$, $m\in M$. Then
the special universal envelope is a rehash of \eqref{special}:
$$
S(J)=T\langle\cJ\rangle/\langle a\otimes b+b\otimes a-a\circ b,\ a,b\in\cJ\rangle.
$$
There are two homomorphisms relating $U(J)$ and $S(J)$, the epimorphism coming from the universal property of $U(J)$:
$U(J)\twoheadrightarrow S(J)$, consequently $S(J)$ is a quotient of $U(J)$.  The other one $S(J)\otimes S(J)\to U(J)$ originates from a tensor product of two special modules. Let $\sigma_1: J\to \operatorname{End}(M)^+$ and $\sigma_2: J\to  \operatorname{End}(N)^+$ be two special representations  
of $J$ then 
\begin{equation}\label{tensor-product}
\rho(a)=\sigma_1(a)\otimes 1+1\otimes\sigma_2(a), \quad a\in J
\end{equation} defines a representation of $J$ in $ \operatorname{End}(M\otimes N)$ and endows  $M\otimes N$ with a structure of $J$-module
which is called a {\it tensor product of two special modules}.  

Let $J$ be unital algebra, $e$ be the identity element of $J$ and $\rho$ be a representation of $\cJ$ corresponding to Jordan module $M$. 
Then in $U(J)$ one has $2e^3+e=3e^2$, what leads to the existence of three orthogonal idempotents $E_0=(e-1)(2e-1)$,
$E_1=e(2e-1)$ and $E_{\frac12}=-4e(e-1)$ in $U(J)$ such that $E_0+E_1+E_{\frac12}=1$. Which in turn implies the Peirce decomposition
$$
U(J)=U_0(J)\oplus U_{\frac12}(J)\oplus U_1(J),
$$
where $U_i(J)=U(J)E_i,\ i=0,\frac12,1$, is an ideal in $U(J)$. Moreover denote by $a_i$
the component of $a\in J\hookrightarrow U(J)$ in $U_i(J)$ then $a\to a_0+a_{\frac12}$
coincides with a special universal envelope for $J$, equivalently $U_0(J)\oplus U_{\frac12}\simeq S(J)$, see Theorem II.11.15, \cite{jj}.

As we stated in the introduction we want to explore special representations of
finite-dimensional Jordan algebra $\cJ$. Without loss of generality 
we may assume that $J$ has identity element. If $J$ does not contain identity then let $J^{\#}=\cJ\oplus \kk 1$ be the
algebra obtained by the formal adjoining of an identity element to
$\cJ$. Then the category of $S(J)$-mod is isomorphic
to the category $U_{\frac12}(J^\#)$-mod, for proof see Theorem 2.3 in
\cite{KOSh}. By Theorem \ref{albert-theorem} any unital algebra $J$  with $\Rad^2\cJ=0$ can be written
as $\I(J)\ltimes \Rad J$, where $\I(J)$ is a semi-simple Jordan algebra and  $\Rad J$ is 
a unital $\I(J)$-bimodule. By Theorem 19.3, \cite{Jac2}  $\Rad J$ is completely reducible. 
The following lemma finishes the descriptions of $\I(J)\ltimes \Rad J$.
\begin{lem}{\cite{Jac2}}\label{simple_module_over_semi_simple_algebra}
Let $\I=\I_1\oplus\I_2\oplus\dots\oplus\I_r$ be a semi-simple
Jordan algebra and $M$ be irreducible unital $\I$-module then we have one
of the following cases
\begin{enumerate}
\item $M$ is a unital $\I_i$-module for some $1\leq i\leq r$,
\item $M$ is a unital $\I_i\oplus \I_j$ module which is a tensor product of two
special modules over $\I_i$ and $\I_j$ respectively, $1\leq i\neq j\leq r$. 
\end{enumerate}
\end{lem}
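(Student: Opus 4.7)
My plan is to classify $M$ via the simultaneous Peirce decomposition associated to the pairwise orthogonal idempotents $e_1, \dots, e_r$ which are the identities of the simple summands $\I_1, \dots, \I_r$. Since $M$ is unital we have $\rho(e) = \sum_k \rho(e_k) = 1$, and specialising the representation identity \eqref{representation_relation} at $a = b = c = e_k$ gives $\rho(e_k)(\rho(e_k) - 1)(2\rho(e_k) - 1) = 0$, so the spectrum of each $\rho(e_k)$ lies in $\{0, \frac12, 1\}$. Together with the constraint $\sum_k \rho(e_k) = 1$, the only compatible joint eigenvalues produce
\[
M = \bigoplus_{1 \leq i \leq r} M^{(i)} \,\oplus\, \bigoplus_{1 \leq i < j \leq r} M^{(ij)},
\]
where on $M^{(i)}$ we have $\rho(e_i) = 1$ and $\rho(e_k) = 0$ for $k \neq i$, while on $M^{(ij)}$ we have $\rho(e_i) = \rho(e_j) = \frac12$ and $\rho(e_k) = 0$ for $k \notin \{i, j\}$.

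For $a \in \I_m$ I would then plug the triples $\{a, e_k, e_l\}$ into \eqref{representation_relation}, using $a \circ e_k = \delta_{mk} a$ and $e_k \circ e_l = \delta_{kl} e_k$, to derive the standard Peirce-multiplication rules: $\rho(a)$ preserves $M^{(i)}$ when $m = i$ and annihilates it otherwise, and preserves $M^{(ij)}$ when $m \in \{i, j\}$ and annihilates it otherwise. Each summand is therefore a submodule, so by irreducibility $M$ equals exactly one of them. If $M = M^{(i)}$, only $\I_i$ acts nontrivially and $\rho(e_i) = 1$, so $M$ is a unital $\I_i$-module and case (1) holds.

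Suppose instead $M = M^{(ij)}$. Since $\rho(e_i) = \frac12$ on $M$ and $e_i$ is the identity of $\I_i$, the same $c = e_i$ substitution used for \eqref{decomposition-module} shows that $\rho|_{\I_i}$ is a special representation of $\I_i$; likewise for $\I_j$. Plugging $a \in \I_i$, $b \in \I_j$, $c = e_j$ into \eqref{representation_relation}, using $a \circ b = 0$, $a \circ e_j = 0$ and $b \circ e_j = b$, yields $\rho(a)\rho(b)\rho(e_j) + \rho(e_j)\rho(b)\rho(a) = \rho(a)\rho(b)$, and since $\rho(e_j) = \frac12$ on $M^{(ij)}$ this immediately produces $[\rho(a), \rho(b)] = 0$. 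Thus the $\I_i$- and $\I_j$-actions on $M$ commute, so $M$ becomes a module over $U_{\frac12}(\I_i) \otimes U_{\frac12}(\I_j)$; because $\kk$ is algebraically closed and these factors are finite-dimensional, any irreducible module over the tensor product splits as an outer tensor product $V \otimes W$ of irreducibles over the factors, and via \eqref{tensor-product} (with $\sigma_1, \sigma_2$ extended by zero from $\I_i$ and $\I_j$ to $\I_i \oplus \I_j$) this realises $M$ as case (2). The one step that requires genuine bookkeeping is the derivation of the Peirce-multiplication rules from the cubic identity \eqref{representation_relation}; the commutation calculation and the tensor-product splitting are then essentially formal.
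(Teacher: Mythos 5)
The paper gives no proof of this lemma: it is imported verbatim from Jacobson \cite{Jac2}, and your argument reconstructs the standard Jordan-theoretic proof that Jacobson's treatment is based on (Peirce decomposition relative to the complete system of orthogonal idempotents $e_1,\dots,e_r$, followed by the eigenvalue count and the identification of the Peirce components). The skeleton is sound: the cubic relation $\rho(e_k)(\rho(e_k)-1)(2\rho(e_k)-1)=0$ together with $\sum_k\rho(e_k)=1$ does force exactly the two eigenvalue patterns you list; the Peirce rules then show each summand is a submodule, and irreducibility picks out one; the $c=e_i$ computation gives speciality of the restricted action on $M^{(ij)}$; the $c=e_j$ computation gives commutativity of the $\I_i$- and $\I_j$-actions; and since $\kk$ is algebraically closed this upgrades $M^{(ij)}$ to a simple $U_{\frac12}(\I_i)\otimes U_{\frac12}(\I_j)$-module, hence an outer tensor product. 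So the route is correct and is, as far as one can tell, the same route the cited source takes.

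One step is silently assumed that deserves a line of justification: before you may speak of \emph{joint} eigenvalues and a simultaneous Peirce decomposition, you need the operators $\rho(e_1),\dots,\rho(e_r)$ to commute (and to be diagonalizable, which the cubic with distinct roots does give). Commutativity does not obviously fall out of \eqref{representation_relation} at idempotents alone; the cleanest source is the other defining identity of a Jordan bimodule, $[\rho(a),\rho(a\circ a)]=0$. Applying it to $a=e_i-e_j$, so that $a\circ a=e_i+e_j$, yields $[\rho(e_i)-\rho(e_j),\rho(e_i)+\rho(e_j)]=2[\rho(e_i),\rho(e_j)]=0$; linearizing the same identity at $a=e_k$, $b\in\I_m$ with $m\neq k$ similarly gives $[\rho(e_k),\rho(b)]=0$, which is exactly what you need to see that $\rho(a)$ preserves the Peirce component for $a$ outside $\I_i$ (resp.\ $\I_i\oplus\I_j$) before concluding it acts by zero there. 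With those two lines supplied, the bookkeeping you defer is genuinely routine and the proof is complete.
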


In \cite{Jac2} N. Jacobson described both $U_{\frac12}(\I)$ and $U_1(\I)$ 
for any finite-dimensional simple Jordan algebra $\I$ and thus obtained 
full description of $\I$-mod$_{\frac12}$ and $\I$-mod$_1$. In the next section we 
will provide the list of irreducible special unital modules for corresponding Tits-Kantor-Koecher
construction $TKK(\I)$. For Jordan modules over
Hermitian algebras (including Albert algebra) see Theorem 19, \cite{Jac2} and also detailed exposition 
in subsection 4.3.1, \cite{KOSh}, for Jordan algebras of bilinear form see 
Theorem 14.1 for special modules and Theorem 16.1 and 16.2 for unital case, all in \cite{Jac2}.
For the sake of completeness, the list of non-trivial irreducible modules  over field $\kk$ is exhausted by 
one-dimensional special module and 
one-dimensional until module.

\section{The Tits-Kantor-Koecher construction for Jordan algebras and
their  special and unital representations}\label{section-tits-kantor-koecher}
The Tits-Kantor-Koecher construction was introduced independently in \cite{Tits},
\cite{Kantor} and \cite{Koecher}. It associate to any unital Jordan algebra $\cJ$
a Lie algebra $Lie(\cJ)$ with a short grading defined by ${\mathfrak{sl}}_2$ 
subalgebra. A {\em short grading} of an algebra $\g$ is a $\Z$-grading of the
form $\g=\g_{-1}\oplus\g_0\oplus\g_1$. Let $P$ be the commutative
bilinear map on $\cJ$ defined by $\,P(x,y)=x\circ y$. Then we
associate to $\cJ$ a Lie algebra with short grading
$Lie(\cJ)=\g_{-1}\oplus\g_0\oplus\g_1$. Namely
we put $\g_{-1}=\cJ$, $\g_0=\langle
L_a,[L_a,L_b]\,|\,a,b\in\cJ\rangle\subset\End(\g_{-1})$ and $\g_1=\langle
P,[L_a,P]\,|\,a\in\cJ\rangle$ with the following multiplication
\begin{itemize}
\item $[x,y]=0$ for $x,\,y\in \g_{-1}$ or $x,\,y\in \g_1$; \item
$[L,x]=L(x)$ for $x\in\g_{-1}$, $L\in\g_0$; \item
$[B,x](y)=B(x,y)$ for $B\in\g_1$ and $x,y\in\g_{-1}$; \item
$[L,B](x,y)=L(B(x,y))-B(L(x),y)+B(x,L(y))$ for any $B\in\g_1$,
$L\in\g_0$ and $x,y\in\g_{-1}.$
\end{itemize}
Then $\g=Lie(\cJ)$ is a Lie algebra which is called the Tits-Kantor-Koecher (TKK)
construction for $\cJ$, see \cite{Kac}.

A {\em short subalgebra} of a Lie algebra $\g$ is an $\mathfrak{sl}_2$
subalgebra spanned by elements $e,h,f$ such that the eigenspace
decomposition of $ad\,h$ defines a short grading on $\g$. For any
Jordan algebra $\cJ$ with unit element $e$ consider elements of
$Lie(\cJ)$ $h_{\cJ}=-L_e$ and $f_{\cJ}=P$, then $\alpha_{\cJ}$
spanned by $e,h_{\cJ}$ and $f_{\cJ}$ defines short subalgebra of
$Lie(\cJ)$.  If  $\g$ is a Lie algebra and $\alpha$
its short subalgebra we will call a pair $(\g,\alpha)$ a short pair.
For any short pair  $(\g,\alpha)$ and any $x,y\in\g_{-1}$ set 
\begin{equation}\label{jordan_product_tkk}
x\circ y=\left[[f_{\alpha},x],y\right]
\end{equation}
then $Jor(\g):=(\g_{-1},\circ)$ is a Jordan algebra.

\begin{lem}\label{center_of_lie}
Let $(\HH,\alpha)$ be a short pair such that $[\HH_{-1},\HH_1]=\HH_0$. Then there 
is an epimorphism $\theta\,: \HH\to Lie(Jor(\HH))$.
\end{lem}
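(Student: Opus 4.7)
Set $J = Jor(\HH)$ and identify $J_{-1} = \HH_{-1}$, so that $Lie(J) = J_{-1} \oplus J_0 \oplus J_1$ with $J_0 \subseteq \End(J_{-1})$ and $J_1 \subseteq \Hom(J_{-1} \otimes J_{-1}, J_{-1})$. The plan is to construct $\theta$ componentwise through the adjoint action: let $\theta|_{\HH_{-1}}$ be the identity, set $\theta|_{\HH_0}(x) := \mathrm{ad}_x|_{\HH_{-1}}$, and define $\theta|_{\HH_1}(y) := B_y$ where $B_y(a,b) := [[y,a],b]$ (which is symmetric in $a, b$ by Jacobi and $[\HH_{-1}, \HH_{-1}] = 0$). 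The key preliminary observation is that under the hypotheses the $\alpha$-module $\HH$ decomposes into three-dimensional irreducibles only: the short grading with weights $\{-1, 0, 1\}$ allows only trivial or three-dimensional $\alpha$-constituents, and the condition $[\HH_{-1}, \HH_1] = \HH_0$ forbids trivial ones inside $\HH_0$. Consequently $\mathrm{ad}_{f_\alpha} : \HH_0 \to \HH_1$ is a bijection, and $\HH$ is generated as a Lie algebra by $\HH_{-1}$ together with $f_\alpha$.

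To verify that $\theta$ lands inside $Lie(J)$, start with $\theta([f_\alpha, a])(b) = [[f_\alpha, a], b] = a \circ b = L_a(b)$, so $\theta([f_\alpha, a]) = L_a \in J_0$. For a general $x = \sum_i [a_i, y_i] \in \HH_0$ (available via the hypothesis), the Jacobi identity gives $\theta(x)(b) = -\sum_i B_{y_i}(a_i, b) = \sum_i [a_i, B_{y_i}](b)$, which lies in $J_0$ once we know each $B_{y_i} \in J_1$. For $\theta|_{\HH_1}$: $\theta(f_\alpha) = P$ is immediate, and for $y = [f_\alpha, z]$ with $z \in \HH_0$, expanding $B_y(a, b) = [[[f_\alpha, z], a], b]$ via Jacobi matches the TKK bracket $[\theta(z), P]$, exhibiting $B_y \in [J_0, P] \subseteq J_1$. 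The fact that $\theta$ is a Lie algebra homomorphism is then checked pair by pair: brackets inside $\HH_{-1}$ or $\HH_1$ are abelian on both sides; the pairs involving $\HH_0$ follow from the functoriality of $\mathrm{ad}$ and the very definition of $\theta|_{\HH_0}$; and the case $(\HH_0, \HH_1)$ reduces to Jacobi through $B_{[x, y]}(a, b) = \mathrm{ad}_x(B_y(a, b)) - B_y(\mathrm{ad}_x(a), b) - B_y(a, \mathrm{ad}_x(b))$.

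Surjectivity is then immediate: $\theta(\HH_{-1}) = J_{-1}$, $\theta(\HH_0)$ contains $L_a = \theta([f_\alpha, a])$ and $[L_a, L_b] = \theta([[f_\alpha, a], [f_\alpha, b]])$ which generate $J_0$, and $\theta(\HH_1)$ contains $P = \theta(f_\alpha)$ and $[L_a, P] = \theta([[f_\alpha, a], f_\alpha])$ which generate $J_1$. The main obstacle I anticipate is the containment $\theta(\HH_1) \subseteq J_1$: unlike the analogous statement for $J_0$ it does not follow from Jacobi alone, and the argument above leans on the $\alpha$-isotypic decomposition of $\HH$, which in turn requires both the short-grading axiom and the non-degeneracy hypothesis $[\HH_{-1}, \HH_1] = \HH_0$ in full.
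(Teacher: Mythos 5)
Your overall approach — defining $\theta$ on $\HH_0$ via $\mathrm{ad}|_{\HH_{-1}}$ and on $\HH_1$ via the symmetric bilinear maps $B_y$ — is close in spirit to the paper's, which likewise sends $[x,f]\mapsto L_x$ and $f\mapsto P$ and then extends. However, the ``key preliminary observation'' you lean on is false: the hypothesis $[\HH_{-1},\HH_1]=\HH_0$ does \emph{not} forbid trivial $\alpha$-constituents inside $\HH_0$, and $\mathrm{ad}_{f_\alpha}:\HH_0\to\HH_1$ need not be injective. A concrete counterexample is the central extension appearing in Lemma~\ref{Clifford example}: take $\hat\g=\g_s\oplus(W\otimes V)\oplus\Lambda^2(W)$ with $\dim W\geq 2$ and bracket $[w\otimes v,\,w'\otimes v']=(v,v')\,w\wedge w'$. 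The central, hence $\alpha$-trivial, subspace $\Lambda^2(W)\subset\hat\g_0$ is itself produced by brackets of elements of $\hat\g_{-1}$ with elements of $\hat\g_1$, so $[\hat\g_{-1},\hat\g_1]=\hat\g_0$ holds even though ${\HH_0}^{f}\neq 0$. The paper never claims such constituents are absent — they simply end up inside ${\rm Ker}\,\theta\subseteq Z(\HH)$, which is exactly what the epimorphism statement permits.

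The same gap causes a circularity in your ``$\theta$ lands in $Lie(J)$'' step: you deduce $\theta(\HH_0)\subseteq J_0$ ``once each $B_{y_i}\in J_1$'', and then deduce $B_y\in J_1$ by writing $y=[f_\alpha,z]$ and invoking $\theta(z)\in J_0$ for an \emph{arbitrary} $z\in\HH_0$. To break this, use the decompositions that the hypothesis actually forces (as the paper does):
$$\HH_0=[f,\HH_{-1}]+\bigl[[f,\HH_{-1}],[f,\HH_{-1}]\bigr],\qquad \HH_1=\bigl[f,[f,\HH_{-1}]\bigr],$$
and bootstrap: first $\theta([f,a])=L_a\in J_0$ directly from the defining formula $x\circ y=[[f_\alpha,x],y]$; then $\theta$ of the piece $[f,[f,\HH_{-1}]]$ lands in $[J_0,P]\subseteq J_1$ using only that; then $\theta$ of the double-bracket piece lands in $[J_0,J_0]\subseteq J_0$. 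Surjectivity onto $Lie(J)$ then drops out of the same computation, while whatever trivial constituents exist sit harmlessly in ${\rm Ker}\,\theta$ — which is precisely how the paper closes the argument.
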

\begin{proof}
Let $\cJ=Jor(\HH)$ and $(\g,\beta)=Lie(\cJ)$. We set  $\theta:\,\HH_{-1}\simeq \cJ\simeq \g_{-1}$ to be the 
identity map. The condition $[\HH_{-1},\HH_1]=\HH_0$ and $\HH_0=[f,\HH_{-1}]+{\HH_0}^f$ implies that
$$\HH_0=[f,\HH_{-1}]+\left[ [f,\HH_{-1}],[f,\HH_{-1}]\right].$$ 
Now we set for any $x\in\HH_{-1}$ $\theta([x,f])=L_x$, and extend $\theta$ to the homomorphism $\HH_0\to\g_0$.
Finally to define $\theta:\,\HH_{1}\to\g_1$ we use $\theta(f)=P$ and $\HH_1=\left[f,[f,\HH_{-1}]\right]$.
We leave to the reader to check that $\theta$ defines an homomorphism of Lie algebras, see Lemma 5.15, \cite{Kac}.

It is clear from the construction that ${\rm Ker}\theta$
consists of all $x\in\HH_0+\HH_1$ such that $[x,\HH_{-1}]=0$. 
If $x\in\HH_1$ then $[e,x]=[f,x]=0$, hence $[h,x]=0$, thus $x=0$. 
If $x\in\HH_0$ then $[e,x]=0$ implies $[f,x]=0$ and 
we have $\left[x,[f,\HH_{-1}]\right]=0$ and 
$\left[x,[f,[f,\HH_{-1}]]\right]=0$, i.e. $x\in Z(\HH)$. 
\end{proof}

A short pair
$(\g,\alpha)$ is called {\em minimal} if any
 if
$[\g_{-1},\g_{1}]=\g_0$ and $Z(\g)=0$. It is easy to see that for any Jordan algebra $J$, $Lie(J)$ is always minimal.

\begin{thm} [Theorem 5.15, \cite{Kac}]\label{Lie-Jor-alg} The functors $Lie$ and $Jor$ establish the equivalence 
of the categories of unital Jordan algebras and 
of minimal pairs $(\g,\alpha)$, with  morphisms in both categories being
epimorphism.
\end{thm}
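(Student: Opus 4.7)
The plan is to establish the equivalence by verifying that the two functors are mutually quasi-inverse, using Lemma \ref{center_of_lie} for one direction and a direct construction for the other.

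First, I would check that the functor $Lie$ lands in minimal pairs. For any unital Jordan algebra $\cJ$, the construction of $Lie(\cJ)$ makes $\g_0$ equal to the span of $L_a$ and $[L_a,L_b]$; since $[L_a,P](b)=L_a(P(b))-P(L_a(b))+P(b,L_a)$ produces all $L_{a\circ b}$ and brackets thereof, one gets $[\g_{-1},\g_1]=\g_0$. For the centerless condition, if $z\in Z(Lie(\cJ))$ then $[z,\g_{-1}]=0$ forces $z$'s components in $\g_0$ and $\g_1$ to vanish by the faithful action on $\cJ$ (elements of $\g_0$ act as multiplication/bracket operators on $\cJ$, and $P$ together with $[L_a,P]$ are nonzero operators encoding $\circ$ and $L$). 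So $Lie(\cJ)$ is minimal.

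Next, I would verify $Jor(Lie(\cJ))\simeq \cJ$. By construction $Lie(\cJ)_{-1}=\cJ$ as a vector space, and applying \eqref{jordan_product_tkk} gives $x\circ_{\text{new}} y = [[f_\alpha,x],y]=[P,x](y)=P(x,y)=x\circ y$ by definition of $P$. So this isomorphism is tautological and clearly natural in $\cJ$.

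For the other composition, I would invoke Lemma \ref{center_of_lie}: given a minimal short pair $(\g,\alpha)$, the hypothesis $[\g_{-1},\g_1]=\g_0$ applies, producing an epimorphism $\theta\colon \g\to Lie(Jor(\g))$. The lemma further identifies $\ker\theta$ with elements of $\g_0\oplus\g_1$ centralizing $\g_{-1}$, which by the analysis in the lemma's proof lies in $Z(\g)$; minimality gives $Z(\g)=0$, so $\theta$ is an isomorphism. Naturality is forced by the fact that $\theta$ is the identity on $\g_{-1}$ and the extension to $\g_0$ and $\g_1$ is uniquely determined by the relations $\theta([x,f])=L_x$ and $\theta(f)=P$.

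Finally, I would handle morphisms. A homomorphism $\varphi\colon \cJ_1\to \cJ_2$ of unital Jordan algebras induces, via $x\mapsto\varphi(x)$ on $\g_{-1}$, $L_a\mapsto L_{\varphi(a)}$ on $\g_0$, and $P_1\mapsto P_2$ on $\g_1$, a graded Lie homomorphism $Lie(\varphi)$ respecting the short subalgebra; conversely, any graded morphism of short pairs restricts to a morphism of the Jordan algebras on $\g_{-1}$. These assignments are mutually inverse on $\Hom$-sets. Surjectivity of $\varphi$ is equivalent to surjectivity of $Lie(\varphi)$ on $\g_{-1}$, which in turn forces surjectivity on $\g_0=[\g_{-1},\g_1]$ and on $\g_1$ (since $\g_1=[f,[f,\g_{-1}]]$ as exploited in the lemma's proof). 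The main technical obstacle is precisely this bookkeeping: making sure the epimorphism property transfers cleanly in both directions without leaking out of the class of minimal pairs, and that the components of morphisms in degrees $0$ and $1$ are determined by the degree $-1$ component so no extra data is lost.
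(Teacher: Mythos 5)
The paper does not prove this theorem; it is stated as a citation to [Kac], Theorem 5.15, with only the supporting Lemma~\ref{center_of_lie} and the brief remark that $Lie(J)$ is always minimal given nearby. Your reconstruction is essentially the right one and uses exactly those ingredients: $Jor(Lie(\cJ))\simeq\cJ$ is tautological from \eqref{jordan_product_tkk} once one notes $[P,x]=L_x$, and for a minimal pair $(\g,\alpha)$ Lemma~\ref{center_of_lie} yields an epimorphism $\theta:\g\to Lie(Jor(\g))$ whose kernel lies in $Z(\g)=0$, hence an isomorphism. A few small corrections. In checking $Z(Lie(\cJ))=0$ you should first observe that any central $z$ lies in $\g_0$ (bracketing with the grading element $h_\cJ=-L_e$ kills its $\g_{\pm1}$-components), and then invoke faithfulness of $\g_0\subset\End(\g_{-1})$. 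The displayed formula for $[L_a,P](b)$ is garbled: $P$ is bilinear, and the computation you want for $[\g_{-1},\g_1]=\g_0$ is $[P,a]=L_a$ together with $[[L_b,P],a]=[L_b,L_a]-L_{b\circ a}$. Finally, the restriction to epimorphisms is more than bookkeeping: for a non-surjective $\varphi:\cJ_1\to\cJ_2$ the assignment $L_a\mapsto L_{\varphi(a)}$ is not even well-defined on $\g_0\subset\End(\cJ_1)$, because a relation $\sum c_i L_{a_i}=0$ in $\End(\cJ_1)$ only transfers to $\End(\cJ_2)$ when every element of $\cJ_2$ has a $\varphi$-preimage. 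That is the actual reason $Lie$ is a functor only on epimorphisms, and is worth saying explicitly rather than leaving under the umbrella of ``making sure the epimorphism property transfers cleanly.''
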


It is possible to extend these functors to the categories of representations.  
Denote by $\hat{\g}$ the universal central extension of $\g$. Observe that $\hat{\g}$
has a short grading, the center of $\hat{\g}$ is in $\hat{\g}_0$, and $\hat{\g}_{\pm 1}=\g_{\pm 1}$.
Denote by $\mathcal{S}_{\frac12}$  the category of finite-dimensional $\hat{\g}$-modules $V$ such that as $\mathfrak{sl}_2$-modules they are direct sum of standard representations, denote by $\mathcal{S}_1$ the category of finite-dimensional $\hat{\g}$-modules $N$ such that the action of $\alpha_{\cJ}$ induces a short grading on $N$. In \cite{KS} functors $Lie$ and $Jor$ were extended to functors between categories $\Jhalf$ and $\mathcal{S}_{\frac12}$, and $\Jone$ and $\mathcal{S}_1$. 
\begin{thm}[Proposition 2.1,\cite{KS2}]\label{Lie-Jor-one}
The functors $Lie$ and $Jor$ define an equivalence of the categories $\Jhalf$ and $\mathcal{S}_{\frac12}$. 
\end{thm}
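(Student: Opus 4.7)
I would construct explicit quasi-inverse functors. Given $(M,\rho)\in\Jhalf$, I set $V:=M\oplus M$ and interpret the two summands as the weight spaces $V_{\pm\frac12}$ of $h_J$, so that $\alpha_J\simeq\mathfrak{sl}_2$ acts on $V$ as a direct sum of standard representations; in this picture $f_J$ acts as the identification $V_{-\frac12}\xrightarrow{\sim}V_{\frac12}$, and more generally an arbitrary $a\in\g_{-1}$ acts off-diagonally by (a signed multiple of) $\rho(a)$, adjusted so that the identity $e\in\g_{-1}$ gives $\rho(e)=\tfrac12$. In the reverse direction, given $V\in\mathcal{S}_{\frac12}$, I set $M:=V_{-\frac12}$ and define $\rho(a):=L_a|_{V_{-\frac12}}$, where $L_a=[f_J,a]\in\hat\g_0$; since $L_e=-h_J$, this yields $\rho(e)=\tfrac12$ automatically. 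Morphisms transport componentwise in the first direction and by restriction to $V_{-\frac12}$ in the second.

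The essential compatibility in both directions is the bracket identity $[[f_J,a],b]=a\circ b$ in $\g$, which is the defining identity of the Jordan product via \eqref{jordan_product_tkk}. Computing the associative commutator $[L_a,b]$ as operators on $V$ and projecting to the relevant weight space yields, up to sign, the operator $\rho(a)\rho(b)+\rho(b)\rho(a)$ on $M$, while the direct action of $a\circ b\in\g_{-1}$ produces $\rho(a\circ b)$. Equating these forces precisely the special-representation axiom $\rho(a\circ b)=\rho(a)\rho(b)+\rho(b)\rho(a)$, showing both that $Jor$ lands in $\Jhalf$ and that the $\hat\g$-module $V$ built by $Lie$ is internally consistent. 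The compositions $Jor\circ Lie$ and $Lie\circ Jor$ are naturally isomorphic to the identity functors, since $V\in\mathcal{S}_{\frac12}$ is determined as an $\mathfrak{sl}_2$-module by $V_{-\frac12}$, and the remainder of the $\hat\g$-action is pinned down by $\rho$.

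The main obstacle I expect is arguing that the partially-specified action on $V$ extends to a genuine action of $\hat\g$ rather than merely of $\g$. The Lie subalgebra of $\End(V)$ generated by the prescribed operators is in general only a central extension of $\g$: relations among the $L_a\in\g_0$ coming from the Jordan identity need not descend to operators on $V$ without central corrections. This is exactly where $\hat\g$ enters --- by its universal property as the universal central extension it surjects onto the generated Lie subalgebra, endowing $V$ with a canonical $\hat\g$-module structure. Lemma~\ref{center_of_lie} supplies the control of the center needed to execute this step cleanly, thereby closing the equivalence.
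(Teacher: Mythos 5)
The paper gives no internal proof of this theorem: it simply cites Proposition 2.1 of \cite{KS2} (proved there in the super setting) and notes that the even case follows, so there is nothing in the text to compare against line by line. Your reconstruction of the quasi-inverse functors is the natural route, and the mechanism you isolate --- that the defining identity $[[f_J,a],b]=a\circ b$ from \eqref{jordan_product_tkk} translates into the special-representation axiom $\rho(a\circ b)=\rho(a)\rho(b)+\rho(b)\rho(a)$ on $M=V_{-\frac12}$ --- is the right one. Two places in the sketch need tightening before it is a proof. First, ``projecting $[L_a,b]$ to the relevant weight space'' does not literally produce an endomorphism of $M$: as an operator $[L_a,b]$ carries $V_{\frac12}$ to $V_{-\frac12}$, whereas $\rho(a)\rho(b)+\rho(b)\rho(a)$ acts on $V_{-\frac12}$, so one must still precompose with $f_J\colon V_{-\frac12}\to V_{\frac12}$. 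Concretely $\rho(a):=L_a|_{V_{-\frac12}}=-(a\,f_J)|_{V_{-\frac12}}$, and expanding $\rho(a\circ b)=-((a\circ b)f_J)|_{V_{-\frac12}}$ with $a\circ b=[L_a,b]$ gives $\rho(a)\rho(b)+\rho(b)\rho(a)$ directly; the ``signed projection'' phrasing skips this composition. Second, the appeal to $\hat\g$ should be phrased as ``the assigned operators on $V$ extend to a Lie algebra homomorphism $\hat\g\to\mathfrak{gl}(V)$,'' not as $\hat\g$ surjecting onto the generated subalgebra: since $\rho$ need not be faithful, the Lie subalgebra of $\End(V)$ generated by the prescribed operators may be a central extension of a proper quotient of $\g$ rather than of $\g$, and Lemma~\ref{center_of_lie} then identifies it as a central extension of $Lie(J/\ker\rho)$. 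The universal property of $\hat\g$ still supplies the required homomorphism (no surjectivity is needed), but the statement must be made carefully so as not to presuppose faithfulness. Neither point defeats your strategy; both are details that a complete proof has to settle.
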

It was proven in \cite{KS2} for superalgebras, which implies trivially the even case. For the other pair of 
categories $\Jone$ and $\mathcal{S}_1$ the relation is more intricate and can be found in Section 3 of \cite{KS}.
Since in this paper we focus on $\mathcal{S}_{\frac12}$, for the unital case we only mention 
\begin{thm} [\cite{KS}]\label{Lie-Jor-mod}
Functors $Lie: \Jone\to \mathcal{S}_1$ and $Jor: \mathcal{S}_1\to\Jone$ have the following properties
\begin{enumerate}
\item $Jor\circ Lie$ is isomorphic to the identity functor in $\Jone$.
\item If $N$ is simple non-trivial module in $\mathcal{S}_1$ then $Jor(L)$ is simple in $\Jone$.
\end{enumerate}
\end{thm}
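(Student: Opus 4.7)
My plan is to treat (1) and (2) separately. For (1) I rely on the construction of $Lie$ on unital Jordan modules developed in \cite{KS}; for (2) I give a direct argument lifting a Jordan submodule to a $\hat{\g}$-submodule.

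For claim (1), given $M\in\Jone$, the functor $Lie$ produces a short-graded $\hat{\g}$-module $N=Lie(M)$ whose $(-1)$-piece is canonically $M$ and whose $\hat{\g}$-action is arranged so that
\[
[[f_{\cJ},a],m]=a\cdot m\qquad\text{for all }a\in\cJ=\g_{-1},\ m\in M=N_{-1}.
\]
This is the defining property of the construction, which mimics the TKK procedure applied to the split extension $\cJ\oplus M$. Applying $Jor$ recovers $N_{-1}$ with the action $a\cdot v=[[f,a],v]$, and the displayed identity says this agrees with the original action on $M$. Naturality in $M$ follows because a $\cJ$-module morphism $\varphi\colon M\to M'$ lifts canonically to a bidegree-zero morphism $Lie(\varphi)\colon Lie(M)\to Lie(M')$ whose restriction to degree $-1$ is $\varphi$.

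For claim (2), let $N\in\mathcal{S}_1$ be simple and non-trivial; in particular $N_{-1}\neq 0$. Let $V\subset N_{-1}$ be a non-zero Jordan submodule under $a\cdot v=[[f,a],v]$. By the proof of Lemma \ref{center_of_lie},
\[
\g_0=[f,\g_{-1}]+\bigl[[f,\g_{-1}],[f,\g_{-1}]\bigr].
\]
The first summand acts on $V$ as Jordan operators, preserving it; for the second, Jacobi gives
\[
[[[f,b],[f,a]],v]=[[f,b],[[f,a],v]]-[[f,a],[[f,b],v]]\in V,
\]
each term being a double Jordan action on $v\in V$. Hence $\g_0\cdot V\subset V$. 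Schur's lemma, applicable since $N$ is simple finite-dimensional over the algebraically closed field $\kk$, forces the center $Z\subset\hat{\g}_0$ to act on $N$ by scalars, so $Z\cdot V\subset V$ as well, and altogether $\hat{\g}_0\cdot V\subset V$. Now set
\[
\widetilde V:=V\oplus\hat{\g}_1\cdot V\oplus\hat{\g}_1\hat{\g}_1\cdot V\subset N,
\]
which is direct by the short grading. Using $N_{\geqs 2}=0$ together with Jacobi one checks that $\widetilde V$ is $\hat{\g}$-stable; for instance, $\hat{\g}_{-1}\cdot(\hat{\g}_1\cdot V)\subset[\hat{\g}_{-1},\hat{\g}_1]\cdot V\subset\hat{\g}_0\cdot V\subset V$, and the other verifications are analogous. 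Since $\widetilde V\neq 0$, simplicity of $N$ forces $\widetilde V=N$; comparing degree $-1$ components gives $V=N_{-1}$, so $Jor(N)=N_{-1}$ has no proper non-zero Jordan submodule and is simple.

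The main obstacle I anticipate is in (1): one must unpack the module-level construction of $Lie$ from \cite{KS} and verify that the Jordan action recovered on the $(-1)$-piece really does agree with the original action on $M$, with no extraneous contribution coming from the universal central extension $\hat{\g}\to\g$. For half-modules Theorem \ref{Lie-Jor-one} supplies a clean equivalence of categories, but here the correspondence is only one-sided, so more care is needed with the central piece. By contrast, (2) is essentially a short Jacobi-plus-Schur calculation once $\hat{\g}_0$-stability of $V$ is established.
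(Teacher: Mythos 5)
The paper does not prove this theorem; it cites it from \cite{KS}, so there is no argument in the paper itself to compare against. Assessing your proposal on its own terms: your proof of part (2) is correct. The key point, $\hat\g_0$-stability of $V$, holds because $\hat\g_0$ is spanned, modulo the centre $Z$, by elements $[f,a]$ and $[[f,a],[f,b]]$ with $a,b\in\g_{-1}$, and these act on $V$ as single and iterated Jordan multiplications, which preserve $V$ by the submodule hypothesis; $Z$ acts by scalars by Schur, so it preserves $V$ as well. Then $\widetilde V=V\oplus\hat\g_1 V\oplus\hat\g_1\hat\g_1 V$ is $\hat\g$-stable by the grading-plus-Jacobi checks you indicate (one also uses $N_{\pm 2}=0$), and comparing degree-$(-1)$ pieces after invoking simplicity of $N$ gives $V=N_{-1}$. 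One omission worth filling: you assert but do not justify that non-triviality of $N$ implies $N_{-1}\neq 0$; this follows since $N_{-1}=0$ together with the short grading forces $N=N_0$, whence $\alpha$ acts trivially and then so does all of $\g$ because $\g_0=[\g_{-1},\g_1]$, so $N$ is (one-dimensional) trivial.

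Part (1), however, is not actually a proof. You state that $Lie(M)$ is constructed so that its degree-$(-1)$ component is $M$ with the action $[[f_{\cJ},a],m]=a\cdot m$, and then observe that $Jor$ reads off exactly this action. That is a correct description of what the theorem asserts, but it defers all of the substance — the construction of $Lie(M)$ as a $\hat\g$-module (including how the universal central extension enters and why trivially on this module), the verification that it lands in $\mathcal S_1$, and functoriality — to \cite{KS}, exactly as you acknowledge at the end. As written, (1) is a placeholder, not an argument.
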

Next we describe irreducible modules in both $\mathcal{S}_{\frac12}$ and  $\mathcal{S}_1$ for $Lie(\I)$, $\I$ a Jordan algebra
from Theorem \ref{albert-theorem}.  
\begin{enumerate}
\item[A.] $\cJ=\kk$, $Lie(\kk)=\ssl(2)$: there is only one (irreducible module) in $\mathcal{S}_{\frac12}$ -- the two-dimensional simple
  $\mathfrak{sl}(2)$-module $L$; and only one irreducible module in $\mathcal{S}_1$ -- the adjoint $\mathfrak{sl}(2)$-module $ad$.
\item[B1.] $\cJ=H_n(C_1)=Sym_n^+(\kk)$, $n\geq 3$; $Lie(Sym_n^+(\kk))=\ssp(2n)$:  there is only one module with grading of length $2$, standard $2n$-dimensional $V$; and two $\ssp(2n)$-modules with short grading, $n(2n+1)$-dimensional adjoint representation $ad=S^2 V$, and $n(2n-1)-1$-dimensional $\overline{\Lambda^2 V}=\Lambda^2 V/{\kk}$. 
\item[B2.] $\cJ=H_n(C_2)=M_n^+(\kk)$, $n\neq 3$; $Lie(M_n^+(\kk))=\ssl(2n)$: there are two modules in $\mathcal{S}_{\frac12}$ both of dimension $2n$, standard $V$ and its dual $V^*$; there are five modules
in $\mathcal{S}_1$, $4n^2-1$-dimensional adjoint module $ad$, two $n(2n+1)$-dimensional modules 
$S^2(V)$ and $S^2(V^*)$, and  two $n(2n-1)$-dimensional modules 
$\Lambda^2(V)$ and $\Lambda^2(V^*)$.
\item[B3.] $\cJ=H_n(C_3)=Symp^+_n(M_2(\kk))$, $n\geq 3$, $Lie(Symp^+_n(M_2(\kk)))=\so(4n)$: there is only one module in $\mathcal{S}_{\frac12}$ -- the standard $4n$-dimensional module $V$; and two in $\mathcal S_1$ -- the  $2n(4n-1)$-dimensional adjoint representation $ad\simeq\Lambda^2 V$, and the  $2n(4n+1)-1$-dimensional $\overline{S^2 V}=S^2 V/{\kk}$. For $n=3$, $\so(12)$ has one more module with short grading--
$32$-dimensional spinor module $\Gamma^+$.
\item[B4.] $\cJ=Alb=H_3(C_4)$, $Lie(Alb)=E_7$; there are no modules in $\mathcal{S}_{\frac12}$,
there is only one (irreducible) module in $\mathcal{S}_1$-- $133$-dimensional adjoint representation $ad$.
\item[C1.] $\cJ=\cJ(V,f)$, $\dim\cJ=2m-1$, $m\geq 2$, $Lie(\cJ(V,f))=\so(2m+1)$: there is only 
one module in $\mathcal{S}_{\frac12}$-- the $2^{m}$-dimensional spinor representation $\Gamma$; and $m$ modules in $\mathcal S_1$ -- the exterior powers of the standard module $\Lambda^r V$, $r=1,\dots,m$.
\item[C2.] $\cJ=\cJ(V,f)$, $\dim\cJ=2m-2$, $m\geq 3$, $Lie(\cJ(V,f))=\so(2m)$: 
there are two modules in $\mathcal{S}_{\frac12}$,-- the $2^{m-1}$-dimensional spinor representations 
$\Gamma^+$ and $\Gamma^-$; and $m+1$ modules in $\mathcal S_1$ -- the exterior powers of the standard module $\Lambda^r V$, $r=1,\dots,m-1$,
and $\Lambda^+$, $\Lambda^-$ (two irreducible components of the reducible $\Lambda^m V=\Lambda^+\oplus \Lambda^-$).
\end{enumerate}

Note that for a simple Lie algebra the category $\mathcal{S}_{\frac12}$ has at most two simple objects. With exception of $E_7$ any simple module  in $\mathcal S_1$ is a simple component of a tensor product of two simple modules from $\mathcal{S}_{\frac12}$.
 
\section{Category $\mathcal{S}_{\frac12}$ for Jordan algebras with radical squared zero}\label{section_representation_algebras}

Let $\cJ$ be a finite-dimensional Jordan algebra with $\Rad^2\cJ=0$, then Theorem~\ref{albert-theorem}
implies that $\cJ=\I+\R$, where $\I=\oplus_{i=1}^k \I_i$ and $\R=\oplus_{j=1}^r M_j$, and $M_j$ is an irreducible unital $\I$-module from (2) and (3) Lemma~\ref{simple_module_over_semi_simple_algebra}. 
Theorems \ref{Lie-Jor-alg}, \ref{Lie-Jor-one}  and \ref{Lie-Jor-mod} imply 
\begin{equation}\label{Lie-g}
\g=Lie(\cJ)=Lie(\I)+Lie(\R), \quad Lie(\I)=\oplus_{i=1}^k Lie(\I_i), \quad Lie(\R)=\oplus_{j=1}^r Lie(M_j),
\end{equation}
where $Lie(\R)^2=0$, $Lie(\I_j)$ is one of the Lie algebras in the A-C list of the last section and
$M_j$ is either $\I_i$-module in $\mathcal{S}_1$ from that list or tensor product of two modules in 
 $\mathcal{S}_{\frac12}$ (over different simple Lie algebras $Lie(I_i)$ and $Lie(I_J)$) from the list.
By Theorem~\ref{Lie-Jor-one} the categories $\Jhalf$ and $\mathcal{S}_{\frac12}$ are isomorphic. 
Starting from this point we forget Jordan algebras and deal with the corresponding Lie algebra modules.

Let us recall the following well-known observation usually
attributed to Gabriel, \cite{Gab2}. Let $\mathcal C$ be an abelian category
with finitely many simple modules such that every object has
finite length and every simple object has a projective cover. Let
$P$ be a projective generator of $\mathcal C$ and
$A=\operatorname{End}P$, then $\mathcal C$ is equivalent to the
category of finite-dimensional $A$-modules. If $L_1,\dots,L_r$ is
the set of all up to isomorphism simple objects in $\mathcal C$
and $P_1,\dots,P_r$ are their projective covers, then $A$ is a
pointed algebra which is usually realized as the path algebra of a
certain quiver $Q$ with relations. The vertices of $Q$ correspond
to simple (resp. projective modules) and the number of arrows from
vertex $i$ to vertex $j$ equals
$\operatorname{dim}\operatorname{Ext}^1(L_j,L_i)$ (resp.
$\operatorname{dim}\operatorname{Hom}(P_i,\operatorname{rad}P_j/\operatorname{rad}^2 P_j$).

We apply this approach to the case when $\mathcal C$ is $\cS_{\frac12}$
for a Lie algebra $\g=\g_s\ltimes\rr$ with semisimple subalgebra $\g_s$ and an abelian radical $\rr$.

\begin{lem}\label{centext} Let $\g=\g_s\ltimes\rr$, where $\rr$ is an abelian
  radical. Then $\hat\g=\g_s\oplus\rr\oplus \Lambda^2(\rr)^{\g_s}$ is a direct sum of $\g_s$-modules
  with $\mathfrak z=\Lambda^2(\rr)^{\g_s}$. The
\end{lem}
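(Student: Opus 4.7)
My plan is to identify $\hat{\g}$ by computing the second Lie algebra homology $H_2(\g, \kk)$, which by the universal property of the universal central extension coincides with the kernel of $\hat{\g} \twoheadrightarrow \g$, and then to use semisimplicity of $\g_s$ to split the resulting extension as a $\g_s$-module.

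The main tool will be the Hochschild--Serre spectral sequence for the abelian ideal $\rr$ of $\g$ with quotient $\g/\rr \simeq \g_s$:
\[
E_2^{p,q} = H^p(\g_s, H^q(\rr, \kk)) \Longrightarrow H^{p+q}(\g, \kk).
\]
Since $\rr$ is abelian, $H^q(\rr, \kk) = \Lambda^q \rr^*$ as a $\g_s$-module, with $\g_s$ acting as the contragredient of its adjoint action on $\rr$. Applying Whitehead's first and second lemmas gives $H^p(\g_s, V) = 0$ for $p \in \{1, 2\}$ and any finite-dimensional $\g_s$-module $V$ (using $\g_s$ semisimple and $\operatorname{char}\kk = 0$). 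This kills all $E_2^{p, q}$ with $p \in \{1, 2\}$, and all outgoing differentials from $E_r^{0, 2}$ for $r \geq 2$ land in zero terms. Hence only $E_2^{0,2}$ contributes in total degree two:
\[
H^2(\g, \kk) \simeq (\Lambda^2 \rr^*)^{\g_s}.
\]

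Next, since $\g_s$ is reductive, the functor of $\g_s$-invariants is exact and commutes with duality on finite-dimensional modules, giving the canonical isomorphism $(\Lambda^2 \rr^*)^{\g_s} \simeq \bigl((\Lambda^2 \rr)^{\g_s}\bigr)^*$ induced by the natural pairing. Dualizing yields $\mathfrak{z} := H_2(\g, \kk) \simeq (\Lambda^2 \rr)^{\g_s}$. For this to equal the kernel $\ker(\hat{\g} \twoheadrightarrow \g)$ in the universal sense, I need $\g$ to be perfect; in the setting of~\eqref{Lie-g}, each irreducible summand of $\rr$ is a non-trivial $\g_s$-module, which forces $[\g_s, \rr] = \rr$ and hence $[\g, \g] = \g$. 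Finally, $\mathfrak{z}$ is central in $\hat{\g}$, hence a trivial $\g_s$-module, and the short exact sequence $0 \to \mathfrak{z} \to \hat{\g} \to \g \to 0$ splits as $\g_s$-modules by semisimplicity of $\g_s$; combined with the $\g_s$-decomposition $\g = \g_s \oplus \rr$, this yields the stated $\hat{\g} = \g_s \oplus \rr \oplus \mathfrak{z}$.

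The main obstacle is the clean degeneration of the spectral sequence, which rests crucially on Whitehead's lemmas; without them, $E_2^{1,1}$ (carrying extension data between $\g_s$ and $\rr^*$) and $E_2^{2,0}$ could contribute. A secondary subtlety is confirming perfectness of $\g$, which legitimizes speaking of \emph{the} universal central extension; this is automatic in the ambient setting of the paper. The remaining steps reduce to routine bookkeeping with the $\g_s$-action.
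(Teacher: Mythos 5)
Your route through the Hochschild--Serre spectral sequence is genuinely different from the paper's one-line proof (which invokes the Levi decomposition $\hat\g=\g_s\oplus\hat\rr$ and reads off $\mathfrak z=[\hat\rr,\hat\rr]$ and $\hat\rr=\rr\oplus\mathfrak z$ by complete reducibility), but it has a gap in the degeneration step. You assert that all outgoing differentials from $E_r^{0,2}$ for $r\geq 2$ land in zero terms. This is fine for $d_2$ (whose target $E_2^{2,1}=H^2(\g_s,\rr^*)$ vanishes by Whitehead) and for $r\geq 4$ (negative $q$), but $d_3\colon E_3^{0,2}\to E_3^{3,0}$ lands in $E_3^{3,0}=E_2^{3,0}=H^3(\g_s,\kk)$, which is \emph{not} zero for semisimple $\g_s$: it is isomorphic to $\kk^k$, $k$ the number of simple factors, generated by the Cartan three-forms $\langle[\cdot,\cdot],\cdot\rangle$. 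Whitehead's lemmas give vanishing only in degrees one and two, which you used, but nothing about $H^3$ with trivial coefficients.

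The $d_3$ does vanish, but for a reason you did not supply. Since $\g=\g_s\ltimes\rr$ is a \emph{split} extension, the inflation $H^*(\g_s,\kk)\to H^*(\g,\kk)$ has a retraction given by restriction along the section $\g_s\hookrightarrow\g$, hence is injective; it factors as $E_2^{p,0}\twoheadrightarrow E_\infty^{p,0}\hookrightarrow H^p(\g,\kk)$, so the whole bottom row consists of permanent cycles and cannot receive a nonzero differential, forcing $d_3=0$. Alternatively, one can sidestep the spectral sequence: any $\omega\in(\Lambda^2\rr^*)^{\g_s}$, extended by zero to $\Lambda^2\g^*$, is a Chevalley--Eilenberg cocycle (the only nontrivial check, for $x,y\in\rr$, $z\in\g_s$, is exactly the $\g_s$-invariance of $\omega$), and such a cocycle is exact only if it vanishes on $\Lambda^2\rr$, i.e.\ only if $\omega=0$; this gives the injection $(\Lambda^2\rr^*)^{\g_s}\hookrightarrow H^2(\g,\kk)$ directly, and combined with the vanishing of $E_2^{1,1}$ and $E_2^{2,0}$ it pins down $H^2(\g,\kk)$. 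With this repair the rest of your argument (perfectness in the ambient setting, duality $H_2\cong (H^2)^*$, and the $\g_s$-module splitting of $0\to\mathfrak z\to\hat\g\to\g\to 0$ by Weyl's theorem) is correct.
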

\begin{proof} Since by the Levi theorem $\hat\g=\g_s\oplus \hat\rr$,
  we have that $\mathfrak z=[\rr,\rr]$ and $\hat\rr=\rr\oplus\mathfrak z$ as a
$\g_s$-module. The statement follows.
\end{proof} 

Observe that by minimality of $(\g,\alpha)$ the radical $\rr$ does not contain trivial 
$\g_s$-submodules. Therefore $\rr$ is 
generated by $\rr_{-1}$ or by $\rr_1$
as a $\g_s$-module, where $\rr_{i}=\g_i\cap\rr$, $i=\pm 1$.

The simple objects in $\cS_{\frac12}$ are simple $\g_s$-modules with
grading of length 2. The projective cover $P(L)$ of a simple module
$L$ is a unique maximal quotient of the induced module
$$I(L)=U(\hat\g)\otimes _{U(\g_s)}L,$$
lying in $\cS_{\frac12}$. To construct this quotient consider a
$\hat\g$-submodule $N\subset I(L)$ generated by all vectors of degree
greater than $\frac12$ and less than $-\frac12$ and then set $P(L):=I(L)/N$.

It is now clear how to describe the quiver $Q$.

\begin{lem}\label{quiver} Let $L$ and $L'$ be two simple
  $\g_s$-modules.
Then $\operatorname{dim}\operatorname{Ext}^1_{\hat\g}(L,L')$ equals the
multiplicity of $L'$ in $L\otimes \rr$.
\end{lem}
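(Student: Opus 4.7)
The plan is to classify short exact sequences $0\to L'\to E\to L\to 0$ in $\cS_{\frac12}$ directly. Since $\g_s$ is semisimple, every such $E$ admits a $\g_s$-equivariant splitting $E\cong L'\oplus L$. By the description of simple objects in $\cS_{\frac12}$ recalled just before the lemma, $\rr$ acts as $0$ on both $L$ and $L'$. For $r\in\rr$ and $\ell\in L\subset E$, the element $r\cdot\ell\in E$ projects to $r\cdot\ell=0$ in the quotient $L$, so $r\cdot\ell\in L'$; this defines a $\g_s$-equivariant map $\phi:\rr\otimes L\to L'$. Compatibility with the short $\mathfrak{sl}_2$-grading is automatic from $\g_s$-equivariance, because $L'$ has no $\mathfrak{sl}_2$-weights outside $\pm\frac12$ and so the forbidden $\pm\frac32$-weight components of $\rr\otimes L$ are forced to be killed. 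In particular $E\in\cS_{\frac12}$.

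Next I would verify that $\phi$ determines the whole $\hat\g$-action, the only remaining ingredient being $\mathfrak z$. By Lemma~\ref{centext}, $\mathfrak z=[\rr,\rr]$ inside $\hat\g$, and for $z=[r_1,r_2]$ and $e\in E$
\[
z\cdot e=r_1(r_2 e)-r_2(r_1 e)\in\rr\cdot L'=0,
\]
so $\mathfrak z$ is automatically forced to act trivially on $E$. This is the one point at which passing from $\g$ to the universal central extension $\hat\g$ could a priori produce additional extensions --- it does not. The map $\phi$ is also independent of the $\g_s$-splitting, since replacing $\ell\mapsto\ell+\sigma(\ell)$ for some $\sigma\in\Hom_{\g_s}(L,L')$ changes $r\cdot\ell$ by $r\cdot\sigma(\ell)=0$.

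Conversely, every $\phi\in\Hom_{\g_s}(\rr\otimes L,L')$ manifestly defines a $\hat\g$-module structure on $L'\oplus L$ lying in $\cS_{\frac12}$, because all Jacobi relations to check involve at least two $\rr$-factors acting on an element of $L$, the first landing in $L'$ which the next annihilates. Together these observations give a linear bijection
\[
\operatorname{Ext}^1_{\hat\g}(L,L')\;\cong\;\Hom_{\g_s}(\rr\otimes L,L'),
\]
whose dimension is the multiplicity of the simple $\g_s$-module $L'$ in $L\otimes\rr$. The one subtle point in the argument is the forced vanishing of the central action of $\mathfrak z$; once that has been extracted from the bracket relation in $\hat\g$, the remainder is routine linear-algebra bookkeeping inside the semisimple category of $\g_s$-modules.
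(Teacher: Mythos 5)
Your argument is correct and arrives at the same answer, but it replaces the paper's cohomological computation with an explicit classification of extensions. The paper first observes (as you do) that $\rr M\subset L'$ forces $\rr^2M=0$, so $\mathfrak z=[\rr,\rr]$ acts trivially and $\operatorname{Ext}^1_{\hat\g}(L,L')=\operatorname{Ext}^1_{\g}(L,L')$; it then identifies this with $H^1(\g,L^*\otimes L')$ and collapses it via the Hochschild--Serre spectral sequence for the ideal $\rr$ to $H^0(\g_s,H^1(\rr,L^*\otimes L'))=\Hom_{\g_s}(\rr,L^*\otimes L')$. You instead use Weyl's theorem to split $E$ as a $\g_s$-module, read off the $\g_s$-map $\phi\colon\rr\otimes L\to L'$ directly, and check by hand that $\phi$ both determines the extension (independently of splitting) and conversely defines a valid $\hat\g$-action. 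The two proofs share the one essential insight --- that passing to $\hat\g$ contributes nothing new because the center is forced to act trivially --- and then diverge: the spectral-sequence route is shorter once the machinery is invoked, while your bare-hands route is more elementary and makes visible exactly where the Jacobi identity is used. One small imprecision: the case $X\in\g_s$, $Y\in\rr$ of the Jacobi identity is not settled by ``two $\rr$-factors annihilate'' but precisely by the $\g_s$-equivariance of $\phi$; it is worth stating that explicitly. Also, your remark about the $\pm\tfrac32$-weight components being killed is a consequence of $\phi$ being $\g_s$-equivariant, not an additional condition needing verification, and membership $E\in\cS_{\frac12}$ follows immediately from the $\g_s$-splitting $E\cong L'\oplus L$.
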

\begin{proof} Consider a non-trivial extension
$$0\to L'\to M\to L\to 0.$$ Then $\rr M\subset L'$, therefore $\rr^2 M=0$ and $\mathfrak z=[\rr,\rr]$ acts trivially on $M$, thus $M$ is a $\g$-module and we have:
$$
\operatorname{Ext}^1_{\hat\g}(L,L')=\operatorname{Ext}^1_{\g}(L,L')= H^1(\g,L^*\otimes L')=
H^0(\g_s, H^1(\rr,L^*\otimes L'))=\Hom_{\g_s}(\rr,L^*\otimes L'),
$$
where the third equality follows from the spectral sequence with respect to the ideal $\rr$.
Therefore $\operatorname{Ext}_{\hat\g}^1(L,L')=\operatorname{Hom}_{\g_s}( \rr\otimes L, L')$.
\end{proof}

\begin{Cor}\label{mult1} Suppose $\rr$ is a sum of $m$ irreducible $\g_s$-modules then 
for any vertices $j,k$ of $Q$ the number of arrows from $j$ to $k$ is at most $m$.
\end{Cor}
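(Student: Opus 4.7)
The plan is to apply Lemma~\ref{quiver} and then reduce the desired bound to a multiplicity-freeness statement for tensor products of simple $\g_s$-modules. By Lemma~\ref{quiver}, the number of arrows from $j$ to $k$ equals $\dim\Hom_{\g_s}(L_k\otimes\rr, L_j)$. Writing $\rr=\bigoplus_{i=1}^m R_i$ as a direct sum of irreducible $\g_s$-modules and using adjunction gives
\begin{equation*}
\dim\Hom_{\g_s}(L_k\otimes\rr, L_j)\;=\;\sum_{i=1}^m\dim\Hom_{\g_s}(R_i, L_k^*\otimes L_j)\;=\;\sum_{i=1}^m\bigl[L_k^*\otimes L_j:R_i\bigr].
\end{equation*}
So it suffices to show each multiplicity $[L_k^*\otimes L_j:R_i]$ is at most $1$, i.e.\ that $L_k^*\otimes L_j$ is multiplicity-free as a $\g_s$-module.

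To establish multiplicity-freeness I would argue factor by factor. Decompose $\g_s=\bigoplus_p Lie(\I_p)$. A simple object of $\cS_{\frac12}$ has $\alpha$-weights in $\{\pm\frac12\}$, so it must be supported on a single simple component $Lie(\I_p)$ with the trivial module on the remaining factors; otherwise weights $0$ or $\pm 1$ would appear, placing the module in $\cS_1$ rather than in $\cS_{\frac12}$. Consequently, if $L_j$ and $L_k$ are supported on different simple components of $\g_s$, then $L_k^*\otimes L_j$ is already irreducible over $\g_s$ and there is nothing to prove. If both are supported on the same $Lie(\I_p)$, the claim reduces to multiplicity-freeness of the corresponding tensor product over a single simple Lie algebra.

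The remaining and only substantive step, which is the main obstacle, is the case-by-case verification against the list in Section~\ref{section-tits-kantor-koecher}. This is classical in every case: $V\otimes V=S^2V\oplus\Lambda^2V$ for the vector representation of $\ssp(2n)$, $\ssl(2n)$, $\so(4n)$; $V\otimes V^*=\kk\oplus ad$ for $\ssl(2n)$; $\Gamma\otimes\Gamma=\bigoplus_{r=0}^m\Lambda^r V$ for the spinor of $\so(2m+1)$; and the analogous formulas for $\Gamma^{\pm}\otimes\Gamma^{\pm}$ and $\Gamma^{\pm}\otimes\Gamma^{\mp}$ for $\so(2m)$. Each decomposition is multiplicity-free. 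Thus $[L_k^*\otimes L_j:R_i]\leq 1$ for every $i$, and summing over the $m$ summands of $\rr$ yields the bound of at most $m$ arrows from $j$ to $k$.
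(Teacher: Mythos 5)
Your argument is correct and follows the paper's overall framework — reduce to Lemma~\ref{quiver} and then establish a multiplicity-freeness statement — but the closing step is genuinely different. You pass through adjunction to write the arrow count as $\sum_i\bigl[L_k^*\otimes L_j:R_i\bigr]$ and then check case by case, against the list in Section~\ref{section-tits-kantor-koecher}, that $L_k^*\otimes L_j$ is multiplicity-free (after the correct reduction to a single simple factor of $\g_s$). The paper instead makes a uniform observation: every simple object $L$ of $\cS_{\frac12}$ in the list is weight-multiplicity-free (all its weight spaces are one-dimensional, i.e.\ it is minuscule), and the multiplicity of any simple summand of $M\otimes L$ is bounded by a weight multiplicity of $L$; hence $\rr'\otimes L$ is automatically multiplicity-free for \emph{every} irreducible $\rr'$, with no need to know what $\rr'$ is or to enumerate tensor-product decompositions. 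Your route buys concreteness — the relevant decompositions are classical and you could read the arrow multiplicities off directly — at the cost of a case check, while the paper's route buys uniformity by extracting the single structural feature (weight multiplicity one) that underlies all of your cases. Both are short, and your case verification is complete and correct.
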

\begin{proof} Let $L$ be a simple $\g_s$-module $L$ with grading of length 2. It follows from the list (A)--(C) that  
all weights of $L$ have multiplicities one, hence for any irreducible component $\rr'$ of $\rr $ the tensor product
  $\rr'\otimes L$ is a multiplicity free $\g_s$-module. Therefore $\dim\operatorname{Hom}_{\g_s}(L_k,L_j\otimes\rr)\leq m$.
\end{proof}

Now we consider decompositions
$$\g_s=\bigoplus_{i=1}^k \g_s^{(i)},\,\, \rr=\bigoplus_{j=1}^m\rr^{(j)},$$
where each $\g_s^{(i)}$ is a simple ideal of $\g_s$ and each $\rr^{(j)}$ is a
simple $\g_s$-module.
This decomposition completely defines the quiver $Q$. Indeed, to each 
$\g_s^{(i)}$ we associate one vertex in $Q$ if $\g_s^{(i)}=\mathfrak{sp}(2n)$,
$\mathfrak{so}(2n)$ with the first short grading or $\mathfrak{so}(2n+1)$
and two vertices if $\g_s^{(i)}=\mathfrak{sl}(2n)$ or $\mathfrak{so}(2n)$
with the second short grading. Each vertex corresponds to a simple
$\g_s^{(i)}$-module lying in $\cS_{\frac12}$. Next, for each  $\rr^{(q)}$ we define the
arrow $i\to j$ if $\operatorname{Hom}_{\g_s}(L_i, L_j\otimes \rr^{(q)})\neq 0$.
Thus, all vertices can be colored into $k$ colors and there is at most
two vertices for each color. In the similar way all arrows can be
colored into $m$ colors and as we will see below there is at most two
arrows of the same color as well.

\begin{lem}\label{wild3} Assume that for some $j_1<j_2<j_3\leq m$  
$\rr^{(j_1)}$, $\rr^{(j_2)}$ and $\rr^{(j_3)}$ are isomorphic as
  $\g_s$-modules.
Then  $\cS_{\frac12}$ is wild.
\end{lem}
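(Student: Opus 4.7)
The idea is to use the three isomorphic radical summands to embed the representation category of the $3$-Kronecker quiver (two vertices joined by three parallel arrows), which is classically wild, as a full subcategory of $\cS_{\frac12}$. Write $M\cong\rr^{(j_1)}\cong\rr^{(j_2)}\cong\rr^{(j_3)}$ for the common $\g_s$-module.

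First, I would locate vertices $L_s,L_t$ of $Q$ and a nonzero $\g_s$-equivariant map $\phi\colon L_s\otimes M\to L_t$. If $M$ comes from case~(1) of Lemma~\ref{simple_module_over_semi_simple_algebra}, so that $M$ is a simple module for one ideal $\g_s^{(i)}$ drawn from the $\mathcal S_1$-list of Section~\ref{section-tits-kantor-koecher}, then the pair is obtained by going through the list A--C and inspecting tensor products $L\otimes M$ for $L$ in the $\mathcal S_{\frac12}$-list for $\g_s^{(i)}$. If $M=L^{(i_1)}\otimes L^{(i_2)}$ comes from case~(2) of the lemma, one may take $L_s=(L^{(i_1)})^{*}$ (which also lies in $\mathcal S_{\frac12}$, hence is a vertex of $Q$): then $L_s\otimes L^{(i_1)}$ contains the trivial $\g_s^{(i_1)}$-summand, so $L_t=L^{(i_2)}$ appears in $L_s\otimes M$.

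Next, for finite-dimensional vector spaces $V_s,V_t$ and a triple $A=(A_1,A_2,A_3)$ of linear maps $V_s\to V_t$, I set
\[
W(A)=(V_s\otimes L_s)\oplus(V_t\otimes L_t)
\]
with $\g_s$ acting diagonally, each $\rr^{(j_k)}\cong M$ $(k=1,2,3)$ acting on $V_s\otimes L_s$ via $A_k\otimes\phi$ (whose image lies in $V_t\otimes L_t$) and as zero on $V_t\otimes L_t$, every other simple summand $\rr^{(q)}$ of $\rr$ with $q\notin\{j_1,j_2,j_3\}$ acting as zero, and $\mathfrak z$ acting as zero. Since the image of the $\rr$-action lies entirely in $V_t\otimes L_t$ where $\rr$ vanishes, $\rr\cdot\rr$ acts as zero on $W(A)$ and the relations $[\rr,\rr]=\mathfrak z$ are automatically satisfied; as the $\alpha$-weights on $W(A)$ are $\pm\tfrac12$, one has $W(A)\in\cS_{\frac12}$.

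A direct computation, using that $V_t\otimes L_t=\rr\cdot W(A)$ is an intrinsic submodule of $W(A)$, shows that an isomorphism $W(A)\stackrel{\sim}{\to}W(A')$ is given by a pair $(f_s,f_t)\in\mathrm{GL}(V_s)\times\mathrm{GL}(V_t)$ satisfying $A'_k f_s=f_t A_k$ for $k=1,2,3$. Thus the assignment $A\mapsto W(A)$ sends indecomposables to indecomposables and respects isomorphism, and since the $3$-Kronecker quiver is wild, so is $\cS_{\frac12}$. The main obstacle in this plan is the first step, the existence of $(L_s,L_t,\phi)$, which requires a short case-by-case verification using the A--C list; once this is settled, the remaining construction is purely formal because the $\rr$-action on $W(A)$ squares to zero.
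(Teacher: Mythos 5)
Your approach is genuinely different from the paper's. The paper's proof is a one-liner: since each simple summand $\rr^{(q)}$ contributes at least one arrow to $Q$ (this is established in the preceding discussion of type~I and type~II components) and by Corollary~\ref{mult1} the multiplicity of any simple $L'$ in $L\otimes\rr^{(q)}$ is at most one, the three isomorphic copies $\rr^{(j_1)}\cong\rr^{(j_2)}\cong\rr^{(j_3)}$ produce three parallel arrows between the same pair of vertices of $Q$. Since relations live in $\rad^2$, the quotient $\mathcal A/\rad^2\mathcal A$ already sees these three arrows, its separated quiver contains a graph outside the Dynkin/Euclidean list, and wildness follows from the standard criterion. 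You instead build an explicit functor from $3$-Kronecker representations into $\cS_{\frac12}$; this is more concrete and does not require invoking the separated-quiver machinery, but it also means you must verify by hand what the quiver-theoretic argument gives for free.

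Two points in your construction need repair. First, the assertion that $V_t\otimes L_t=\rr\cdot W(A)$ is intrinsic is false in general: $\rr\cdot W(A)=\bigl(\sum_k\operatorname{Im}A_k\bigr)\otimes L_t$, which equals $V_t\otimes L_t$ only when the maps $A_k$ jointly surject onto $V_t$. You would need to restrict to a suitable full wild subcategory of $3$-Kronecker representations (say, those with no simple direct summand, so that $\sum_k\operatorname{Im}A_k=V_t$ and $\bigcap_k\ker A_k=0$) for this identification to hold; this is harmless but must be said. Second, your argument that an isomorphism $W(A)\to W(A')$ decomposes as $(f_s,f_t)$ tacitly uses $L_s\not\cong L_t$ to separate the two $\g_s$-isotypic blocks. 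When $\rr^{(j_k)}$ is of type~II and $\g_s^{(i)}$ has a unique simple in $\cS_{\frac12}$ (e.g.\ $\ssp(2n)$, $\so^1(4n)$, $\so^2(2m+1)$), one has $L_s\cong L_t$, and a $\hat\g$-isomorphism is a priori an arbitrary $g\in\mathrm{GL}(V_s\oplus V_t)$ tensored with $\mathrm{id}_L$; one must then argue that compatibility with the $\rr$-action forces $g$ to be block lower-triangular with the required intertwining, which again uses the sincerity restriction above. Finally, the existence of $(L_s,L_t,\phi)$ that you flag as the main obstacle is in fact already established in the text preceding the lemma: by minimality of $(\g,\alpha)$ the component $\rr^{(q)}$ has no trivial $\g_s$-summand, and the discussion of type~I and type~II components shows each $\rr^{(q)}$ contributes at least one arrow, so no further case-checking is needed.
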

\begin{proof} $Q$ contains a Kronecker quiver with the arrow of
  multiplicity $3$.
\end{proof}

We say $\rr^{(q)}$ is of type I if $\rr^{(q)}=L_1\otimes L_2$ where $L_1$ and 
$L_2$ are simple $\g_s^{(i)}$ and $\g_s^{(j)}$-modules respectively for some
$i\neq j$. In this case $L_1$ and $L_2$ are simple modules in  $\cS_{\frac12}$
and $\rr^{(q)}$ defines exactly two arrows $[L_1]\to [L_2^*]$ and
$[L_2]\to [L_1^*]$, where by $[L]$  we denote the vertex
corresponding to a simple module $[L]$.

We say $\rr^{(q)}$ is of type II if $\rr^{(q)}$ is a simple  $\g_s^{(i)}$-module
for some $i\leq k$. In this case $\rr^{(q)}$ defines either one or two arrows in
$Q$ between two vertices of the same color $i$.

It is very important that in our situation for any simple $L$ and a
component $\rr^{(p)}\subset\rr$, $\Hom_{\g_s} (S,\rr^{(p)}\otimes L)\neq 0$ for at most
one simple $S$
and similarly $\Hom_{\g_s} (L,\rr^{(p)}\otimes S')\neq 0$ for at most one
$S'$. 

Thus, our coloring of $Q$ satisfies the following conditions:
\begin{enumerate}
\item There are at most two vertices of the same color.
\item There are at most two arrows of the same color.
\item Two arrows of the same color do not have common head or tail.
\end{enumerate}

We say that $\rr^{(q)}$ is {\it singular} if
$\operatorname{dim}\rr^{(q)}_1=1$. One can see from the list (A)--(C)
if $\rr^{(q)}$ is singular of type I,
then $\rr^{(q)}\simeq E_1\otimes E_2$, where $E_1,E_2$ are the standard
two-dimensional modules over two distinct copies of $\mathfrak{sl}(2)$,
and if $\rr^{(q)}$ is singular of type II, then  $\rr^{(q)}$ is the
standard $\mathfrak{so}(n)$-module, and  $\mathfrak{so}(n)$ itself
has the second short grading.

Our next goal is to determine the relations in the quiver $Q$.
\begin{lem}\label{radical filtration} Let $L$ be a simple module in  $\cS_{\frac12}$
and $P=P(L)$ be its indecomposable projective cover. Then as a
$\g_s$-module $P$ has a direct sum decomposition
$P=L\oplus\rr L\oplus \rr^2 L\oplus \dots$ and $\rr^i(L)\simeq{\rad}^{i}P/{\rad}^{i+1}P$. 
\end{lem}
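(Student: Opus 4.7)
The strategy is to match the radical filtration of $P$ with the filtration of $P$ by iterated $\rr$-action, and then to use semisimplicity of $\g_s$ to split the associated graded pieces off as $\g_s$-submodules of $P$ itself.

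First, I would show that $\rad P = \rr P$, and consequently $\rad^i P = \rr^i P$ for all $i \geq 0$. The simple objects in $\cS_{\frac12}$ are simple $\g_s$-modules on which $\rr$ and $\mathfrak z$ act trivially: the statement for $\mathfrak z$ uses that it is central in $\hat\g$ and contained in $[\hat\rr, \hat\rr]$, hence acts by a scalar of zero trace on any finite-dimensional simple. The maximal semisimple quotient of $P$ therefore equals $P/(\rr P + \mathfrak z P)$, so $\rad P = \rr P + \mathfrak z P$. The relation $\mathfrak z = [\rr,\rr] \subseteq \rr \cdot \rr$ inside $U(\hat\rr)$ gives $\mathfrak z P \subseteq \rr \cdot \rr P \subseteq \rr P$, yielding $\rad P = \rr P$. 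Applying the same reasoning to each submodule $\rr^i P$ and using $\rr(\rr^i P) = \rr^{i+1} P$ gives $\rad^i P = \rr^i P$ by induction on $i$.

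Second, because $P$ is finite-dimensional and $\cS_{\frac12}$ has finitely many simples with finite-dimensional projective covers, the endomorphism algebra of a projective generator is finite-dimensional with nilpotent Jacobson radical, so $\rr^N P = 0$ for some $N$. Each $\rr^i P$ is a $\g_s$-submodule of $P$ (since $\rr$ is a $\g_s$-module and the action map is $\g_s$-equivariant), so complete reducibility of $\g_s$ supplies $\g_s$-equivariant splittings giving a decomposition $P = \bigoplus_{i=0}^{N-1} X_i$ with $X_i \cong \rad^i P / \rad^{i+1} P$ as $\g_s$-modules, and the projection $P \twoheadrightarrow P/\rad P = L$ identifies $X_0$ with $L$.

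Third, I would identify $X_i$ with $\rr^i L$. With the chosen embedding $L \hookrightarrow P$, the multiplication $\rr \otimes L \to P$ lands in $\rr P = \rad P$; since $\rr P = \rr L + \rr \cdot \rad P = \rr L + \rr^2 P$, its composition with the projection onto $\rad P / \rad^2 P = X_1$ is surjective. Denoting this image by $\rr L \subseteq P$ and iterating, one defines $\rr^{i+1} L$ as the image of $\rr \otimes \rr^i L \to P$; the analogous computation gives surjections $\rr^{i+1} L \twoheadrightarrow X_{i+1}$. Combined with the direct sum decomposition of the second step, this yields $P = L \oplus \rr L \oplus \rr^2 L \oplus \cdots$ together with $\rr^i L \simeq \rad^i P / \rad^{i+1} P$ as $\g_s$-modules.

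The main obstacle is the first step: identifying $\rad P$ with $\rr P$ requires separately handling the central part $\mathfrak z$ of $\hat\rr$, both to see it annihilates simples in $\cS_{\frac12}$ and to absorb $\mathfrak z P$ into $\rr^2 P$. Once this is secured, the remaining assertions reduce to a routine application of complete reducibility for the $\g_s$-action on the filtered module $P$.
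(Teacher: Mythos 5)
Your proof is correct and follows essentially the same route as the paper's terse argument: the key facts are $\rad^i P = (\rr+\mathfrak z)^i P$ and $\mathfrak z P\subset\rr^2 P$ (which you derive from $\mathfrak z=[\rr,\rr]\subset\rr\cdot\rr$ in $U(\hat\rr)$), together with $\g_s$-complete reducibility to split off the radical layers. Your explicit justification that $\mathfrak z$ annihilates simples in $\cS_{\frac12}$ (Schur plus the trace-zero property of commutators) and the induction $\rad^i P = \rr^i P$ correctly fill in the details the paper leaves implicit; the only minor imprecision is in the last step, where you show the maps $\rr^i L\twoheadrightarrow\rad^i P/\rad^{i+1}P$ are surjective but do not explicitly argue the sum $\sum_i\rr^i L$ is direct (this follows by a dimension count against $\dim P=\sum_i\dim\rad^iP/\rad^{i+1}P$, or one can simply read $\rr^i L$ as notation for the $i$-th layer as the paper effectively does).
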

\begin{proof} Follows from the fact that ${\rad}^{i}P=(\rr+\mathfrak z)^i P$ and that
$\mathfrak z P\subset \rr^2 P$.
\end{proof}

\begin{Cor}\label{prod-two}  Let $\alpha$ and $\beta$ be two arrows in $Q$ of colors
  $p$ and $q$ respectively. Let $L$ (resp. $L'$) be the simple modules corresponding
  to the tail of $\alpha$ (resp. the head of $\beta$). 
The product $\alpha\beta\neq 0$ iff $\Hom_{\g_s}(L,\rr^{(p)}\rr^{(q)}L')\neq 0$. 
\end{Cor}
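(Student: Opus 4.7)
The plan is to realize the product $\alpha\beta$ in the path algebra $A=\End(\bigoplus_i P(L_i))^{\mathrm{op}}$ as a concrete composition of morphisms between projective covers, and to detect its vanishing in the $\g_s$-module $\rad^2 P(L')/\rad^3 P(L')$ supplied by Lemma~\ref{radical filtration}. By the universal property of $P(L)$ in $\cS_{\frac12}$, Frobenius reciprocity gives $\Hom_{\hat\g}(P(L),P(L'))\cong\Hom_{\g_s}(L,P(L'))$, and the decomposition $P(L')=\bigoplus_k\rr^k L'$ endows this Hom space with a filtration whose second graded piece is $\Hom_{\g_s}(L,\rr^2 L')=\bigoplus_{p,q}\Hom_{\g_s}(L,\rr^{(p)}\rr^{(q)}L')$.

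Each arrow $\beta$ of color $q$ with head $L'$ and tail $M$ corresponds to a $\g_s$-embedding $\iota_\beta\colon M\hookrightarrow\rr^{(q)}L'\subseteq\rad P(L')$ that extends by universality to a $\hat\g$-morphism $\tilde\beta\colon P(M)\to P(L')$; similarly $\alpha$ corresponds to $\iota_\alpha\colon L\hookrightarrow\rr^{(p)}M$ with extension $\tilde\alpha\colon P(L)\to P(M)$. The product $\alpha\beta$ in $A$ is $\tilde\beta\circ\tilde\alpha$, and by $\hat\g$-equivariance of $\tilde\beta$ its restriction to the top $\g_s$-summand of $P(L)$ is the $\g_s$-map
\begin{equation*}
\phi_{\alpha,\beta}\colon L\xrightarrow{\iota_\alpha}\rr^{(p)}M\xrightarrow{\,r\otimes m\,\mapsto\, r\cdot\iota_\beta(m)\,}\rr^{(p)}\rr^{(q)}L'\subseteq\rad^2 P(L').
\end{equation*}
Its class modulo $\rad^3 P(L')$ lies in $\Hom_{\g_s}(L,\rr^{(p)}\rr^{(q)}L')$, giving at once the forward implication.

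For the reverse implication, suppose the Hom space is nonzero. The multiplicity-freeness of $\rr^{(q)}\otimes L'$ and $\rr^{(p)}\otimes M$ from the proof of Corollary~\ref{mult1} fixes the middle vertex $M$ up to isomorphism and the embeddings $\iota_\alpha,\iota_\beta$ up to scalar, so $\phi_{\alpha,\beta}$ is the unique candidate (up to scalar) realizing a copy of $L$ in $\rr^{(p)}\rr^{(q)}L'$ through this middle vertex; hence $\phi_{\alpha,\beta}\neq 0$ and $\alpha\beta\neq 0$. The main obstacle lies precisely here: one must rule out that $\phi_{\alpha,\beta}$ is annihilated by relations in $P(L')$ coming from $[\rr^{(p)},\rr^{(q)}]\subseteq\mathfrak{z}$. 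This is controlled by the containment $\mathfrak{z}P(L')\subseteq\rr^2 P(L')$ noted in the proof of Lemma~\ref{radical filtration}, which confines all central corrections to the graded piece $\rad^2 P(L')/\rad^3 P(L')$ and prevents them from producing a spurious cancellation of the $L$-component so long as $\Hom_{\g_s}(L,\rr^{(p)}\rr^{(q)}L')$ is itself nonzero.
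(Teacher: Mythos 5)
Your proof is correct and fills in exactly the argument the paper intends (the corollary is stated without proof as an immediate consequence of Lemma~\ref{radical filtration}): the forward direction is the observation that $\phi_{\alpha,\beta}$ lands in $\Hom_{\g_s}(L,\rr^{(p)}\rr^{(q)}L')$, and the reverse direction correctly uses the multiplicity-one property to see that $\phi_{\alpha,\beta}$ spans that Hom space whenever it is nonzero. Two cosmetic remarks: the identification $\Hom_{\g_s}(L,\rr^2 L')=\bigoplus_{p,q}\Hom_{\g_s}(L,\rr^{(p)}\rr^{(q)}L')$ is not literally a direct sum decomposition, since the subspaces $\rr^{(p)}\rr^{(q)}L'$ of $\rad^2P(L')/\rad^3P(L')$ can overlap (for instance $\rr^{(p)}\rr^{(q)}L'$ versus $\rr^{(q)}\rr^{(p)}L'$), though this plays no role in your argument; and your closing paragraph about central ``spurious cancellations'' is more a restatement of the claim than an additional step---the genuine content of the reverse implication is already contained in the multiplicity-one sentence preceding it.
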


\begin{lem}\label{whenzero} Let $\alpha$ and $\beta$ be two arrows in $Q$ of colors
  $p$ and $q$ respectively.
If $\alpha\beta\neq 0$, then $\rr^{(p)}\simeq(\rr^{(q)})^*$ as $\g_s$-modules.
Moreover, in the case $p=q$ the invariant pairing 
$\rr^{(p)}\times\rr^{(p)}\to \kk$ is skew-symmetric.
\end{lem}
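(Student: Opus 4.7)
The plan is to use Corollary~\ref{prod-two} to restate $\alpha\beta\neq 0$ as the existence of a copy of $L$ inside $\rr^{(p)}\rr^{(q)} L' \subset \rad^2 P(L')/\rad^3 P(L') \simeq \rr^2 L'$, and then show that this second layer is controlled entirely by the central bracket.

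The heart of the argument is the quadratic identity
\[
xy\cdot v \;\equiv\; \tfrac12[x,y]\cdot v \pmod{\rad^3 P(L')},\qquad x\in\rr^{(p)},\; y\in\rr^{(q)},\; v\in L'.
\]
The defining relation $ab+ba = a\circ b$ of $U_{\half}(J)$ together with $\Rad^2 J = 0$ gives $ab+ba=0$ in $U_{\half}(J)$ for all $a,b\in\Rad J$. Via the equivalence $\Jhalf\simeq \cS_{\half}$ of Theorem~\ref{Lie-Jor-one}, this anticommutation translates into the operator identity $xy+yx=0$ on $V_{\half}$ for $x,y$ of the form $L_a,L_b\in\rr_0$, $a,b\in\Rad J$. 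Combining with the Lie commutation $xy-yx=[x,y]\in\mathfrak z$ (Lemma~\ref{centext}) and propagating along $\g_s$-equivariance -- using that $S^2(\rr)$ is generated under $\g_s$ from $S^2(\rr_0)$ together with $S^2(\rr_{\pm 1})$, the latter killing $V_{\half}$ trivially for grading reasons -- yields the displayed congruence.

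Granted the identity, $\rr^{(p)}\rr^{(q)} L' = \tfrac12[\rr^{(p)},\rr^{(q)}]\cdot L' \subseteq \mathfrak z\cdot L'$ in $\rad^2/\rad^3$, so the existence of $L\hookrightarrow \rr^{(p)}\rr^{(q)} L'$ forces $[\rr^{(p)},\rr^{(q)}]\neq 0$ in $\mathfrak z$. By Lemma~\ref{centext} this bracket is the $\g_s$-equivariant composition
\[
\rr^{(p)}\otimes\rr^{(q)} \;\longrightarrow\; \Lambda^2(\rr) \;\twoheadrightarrow\; \Lambda^2(\rr)^{\g_s} = \mathfrak z.
\]
For $p\neq q$ nonvanishing is equivalent to $(\rr^{(p)}\otimes\rr^{(q)})^{\g_s}\neq 0$, i.e.\ $\rr^{(p)}\simeq (\rr^{(q)})^*$. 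For $p=q$ the map factors through $\Lambda^2(\rr^{(p)})^{\g_s}$, so the induced invariant pairing on $\rr^{(p)}$ is necessarily skew-symmetric.

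The main obstacle is making the quadratic identity precise. One has to identify which $\hat\g$-built operators on $V_{\half}$ implement the Jordan multiplication on $M=V_{\half}$ (namely the $L_a\in\rr_0$ for $a\in\Rad J$), verify $xy+yx=0$ for these operators, and then argue rigorously that $\g_s$-equivariance, anchored on the grading-forced vanishings on $S^2(\rr_{\pm 1})$, propagates the Jordan anticommutation to the full $S^2(\rr)$ modulo $\rad^3 P(L')$.
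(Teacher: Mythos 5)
Your central quadratic identity, $xy\cdot v \equiv \tfrac12[x,y]\cdot v \pmod{\rad^3}$, is false, and this sinks the argument. Take $x\in\rr^{(p)}_1$, $y\in\rr^{(q)}_{-1}$, $v\in L'_{1/2}$. Since $P(L')$ lies in $\cS_{\frac12}$, the grading forces $xv\in P(L')_{3/2}=0$, so $yxv=0$ exactly and therefore $xyv=[x,y]v$. Your identity would then require $[x,y]v\equiv 0\pmod{\rad^3}$; but $[x,y]v\in\mathfrak z L'$ is precisely a generator of the second radical layer whenever $\mathfrak z$ acts non-trivially (as it does, e.g., on the projective cover constructed in Lemma~\ref{non-zero case}, where $\mathfrak z L'$ spans $\rad^2 P$ and $\rad^3 P=0$). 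Equivalently, the ``anticommutation'' $xy+yx\equiv 0$ which you want to propagate to all of $S^2(\rr)$ already fails on $\rr_1\otimes\rr_{-1}$. The proposed $\g_s$-equivariance propagation from $S^2(\rr_0)\cup S^2(\rr_{\pm1})$ cannot fill this gap: the mixed component $\rr_1\otimes\rr_{-1}\subset S^2(\rr)_0$ sits in the same $\alpha$-degree as $S^2(\rr_0)$ but is not generated from it. The Jordan-theoretic anticommutation you start from (for $a,b\in\Rad J$ on the $\half$-module) does hold, but it only constrains certain compositions of operators in the zero-graded part; it does not yield a uniform operator identity on $U(\hat\rr)$ modulo the radical filtration.

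The paper goes in the opposite direction and uses \emph{commutation}, not anticommutation. Assuming $\rr^{(p)}\not\simeq(\rr^{(q)})^*$, Lemma~\ref{centext} gives $[\rr^{(p)},\rr^{(q)}]=0$ in $\hat\g$, so $x$ and $y$ commute as operators; one then sets $N=\{m\in M: \rr^{(p)}m=0\}$, identifies $N_{\pm1/2}$ as the kernel of $\rr^{(p)}_{\mp1}$ on $M_{\pm1/2}$ via grading and $\g_s$-equivariance, and uses commutativity to show $\rr^{(q)}_{1}M\subset N$ and hence $\rr^{(q)}M\subset N$, giving $\rr^{(p)}\rr^{(q)}M=0$ outright. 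The skew-symmetry for $p=q$ is then immediate from Lemma~\ref{centext}: if $\Lambda^2(\rr^{(p)})^{\g_s}=0$ the same vanishing argument applies, so nonvanishing forces a skew invariant. If you want to salvage your approach, you would have to replace the anticommutation by this genuine commutation hypothesis; but then you are essentially reproducing the paper's proof.
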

\begin{proof} Suppose that $\rr^{(p)}$ and $(\rr^{(q)})^*$ are not isomorphic.
In view of Corollary \ref{prod-two} it suffices to show that for any $M\in\cS_{\frac12}$ we have
$\rr^{(p)}\rr^{(q)} M=0$. By Lemma \ref{centext} $xy=yx$ for any $x\in\rr^{(p)}$
and $y\in \rr^{(q)}$.

Let $N$ be the subset of vectors in $M$ annihilated by $\rr^{(p)}$. Then
$N$ is a $\g_s$-submodule and $N=N_{1/2}\oplus N_{-1/2}$.
We claim that 
$$N_{1/2}=\{m\in M_{1/2} | \rr^{(p)}_{-1}(m)=0\}.$$
Indeed, 
$$\rr^{(p)}_{0}m=[(\g_s)_1,\rr^{(p)}_{-1}]m=(\g_s)_1\rr^{(p)}_{-1}m+\rr^{(p)}_{-1}(\g_s)_1m=0$$
and $\rr^{(p)}_{1}m=0$ because of the grading.
In the same way,
$$N_{-1/2}=\{m\in M_{-1/2} | \rr^{(p)}_{1}(m)=0\}.$$

Let us show that $\rr^{(q)}(M)\subset N$.
Indeed,
$$\rr^{(p)}_{-1}\rr^{(q)}_{1}(M_{-1/2})=\rr^{(q)}_{1}\rr^{(p)}_{-1}(M_{-1/2})=0.$$
Therefore $\rr^{(q)}_{1}(M_{-1/2})\subset N$. But
$\rr^{(q)}_{1}(M_{1/2})=0$. Thus,  $\rr^{(q)}_{1}(M)\subset N$. Since $N$ is $\g_s$-invariant
and $\rr^{(q)}=[\g_s,\rr^{(q)}_1]+[\g_s,[\g_s,\rr^{(q)}_1]]$ we have
$$[\g_s,\rr^{(q)}_{1}](M)=\g_s\rr^{(q)}_{1}(M)+\rr^{(q)}_{1}\g_s(M)\subset N,$$
$$[\g_s,[\g_s,\rr^{(q)}_{1}]](M)=\g_s[\g_s,\rr^{(q)}_{1}](M)+[\g_s,\rr_1^{(q)}] \g_s(M)\subset N,$$
and hence $\rr^{(q)}(M)\subset N$.

For the last assertion just observe that in the case $p=q$ 
by Lemma \ref{centext} we have $xy=yx$ for any $x,y\in\rr^p$ if $\Lambda^2(\rr^{(p)})^{\g_s}=0$.
\end{proof}

\begin{Cor}\label{squarezero} Assume that $\hat\g=\g$. If $M$ is a
  $\g$-module in the category  $\cS_{\frac12}$ then $\rr^2 M=0$.
\end{Cor}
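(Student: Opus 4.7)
The plan is to reproduce the argument of Lemma~\ref{whenzero} with $\rr^{(p)}$ and $\rr^{(q)}$ both replaced by all of $\rr$; under the hypothesis $\hat\g=\g$ the commutativity premise of that argument becomes automatic for every pair of elements of $\rr$. Indeed, by Lemma~\ref{centext}, $\hat\g=\g_s\oplus\rr\oplus\mathfrak z$ with $\mathfrak z=\Lambda^2(\rr)^{\g_s}$ and $[\rr,\rr]=\mathfrak z$ inside $\hat\g$, so $\hat\g=\g$ forces $\mathfrak z=0$ and hence $xy=yx$ in $U(\g)$ for all $x,y\in\rr$.

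Fix $M\in\cS_{\frac12}$ and set $N=\{m\in M\mid \rr m=0\}$, a $\g_s$-submodule splitting as $N=N_{-1/2}\oplus N_{1/2}$. The first step is to check
\begin{equation*}
N_{1/2}=\{m\in M_{1/2}\mid\rr_{-1}m=0\},\qquad N_{-1/2}=\{m\in M_{-1/2}\mid\rr_{1}m=0\};
\end{equation*}
the nontrivial inclusion combines grading ($\rr_1 M_{1/2}=(\g_s)_1 M_{1/2}=0$) with the identity $\rr_0=[(\g_s)_1,\rr_{-1}]=[(\g_s)_{-1},\rr_1]$, which holds because each simple summand $\rr^{(q)}$ is short graded and generated as a $\g_s$-module by either $\rr^{(q)}_{-1}$ or $\rr^{(q)}_1$. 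The second step is to show $\rr M\subseteq N$; this is the desired $\rr^2M=0$. On $\rr_{\pm1}$ this is where commutativity enters: for $x\in\rr_1$ and $m\in M_{-1/2}$ one has $\rr_{-1}(xm)=x(\rr_{-1}m)=0$ since $\rr_{-1}m\in M_{-3/2}=0$, so $xm\in N_{1/2}$ by the characterization; the $\rr_{-1}M_{1/2}$ case is symmetric, and $\rr_{\pm1}M_{\pm1/2}=0$ by grading. For $x\in\rr_0$, write $x=[s,y]$ with $s\in(\g_s)_1$ and $y\in\rr_{-1}$: then $xm=s(ym)-y(sm)$, where $ym\in\rr_{-1}M\subseteq N$ and $s(ym)\in\g_s N\subseteq N$ by $\g_s$-stability of $N$, while $y(sm)\in\rr_{-1}M\subseteq N$.

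The one technical point deserving care is the identity $\rr_0=[(\g_s)_1,\rr_{-1}]$, which plays the same role here as $\rr^{(q)}=[\g_s,\rr^{(q)}_1]+[\g_s,[\g_s,\rr^{(q)}_1]]$ did inside the proof of Lemma~\ref{whenzero}. Once that is in place, the whole conclusion is a direct formal consequence of the unconditional commutativity $xy=yx$ on $\rr$ that $\hat\g=\g$ provides.
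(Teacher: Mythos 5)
Your argument is correct and is exactly the derivation the paper intends: since $\hat\g=\g$ forces $\Lambda^2(\rr)^{\g_s}=\mathfrak z=0$ and hence $[\rr,\rr]=0$, the commutativity premise in the proof of Lemma~\ref{whenzero} holds unconditionally, and re-running that argument with $\rr^{(p)},\rr^{(q)}$ replaced by $\rr$ gives $\rr M\subseteq N$, i.e.\ $\rr^2M=0$. Your slight reorganization (treating $\rr_{-1}$, $\rr_1$, $\rr_0$ separately rather than propagating from $\rr_1 M\subset N$ via $\g_s$-invariance) is equivalent, and the identity $\rr_0=[(\g_s)_1,\rr_{-1}]$ you flag is the same one the paper already invokes when characterizing $N_{1/2}$.
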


\begin{Cor} If $\hat\g=\g$, then the product of two arrows $i\to j$
  and $j\to k$ in the  quiver $Q$ is zero.
\end{Cor}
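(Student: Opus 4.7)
The plan is to combine the preceding Corollary \ref{squarezero} (which asserts $\rr^2 M=0$ for every $M\in\cS_{\frac12}$ under the assumption $\hat\g=\g$) with the criterion for non-vanishing of a product of arrows provided by Corollary \ref{prod-two}. The composition of arrows in the quiver is read off the radical layers of the indecomposable projective covers, so vanishing of $\rr^2$ on those projectives should immediately force every length-two path to be zero.

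More precisely, let $\alpha\colon i\to j$ be an arrow of color $p$ and $\beta\colon j\to k$ an arrow of color $q$, and let $L_i,L_j,L_k$ be the simple modules in $\cS_{\frac12}$ attached to these vertices. Consider the indecomposable projective $P=P(L_k)$. By Lemma \ref{radical filtration} we have, as $\g_s$-modules,
\[
P=L_k\oplus \rr L_k\oplus \rr^2 L_k\oplus\dots,\qquad \rr^{i}(L_k)\simeq \rad^{i}P/\rad^{i+1}P,
\]
and by Corollary \ref{prod-two} the product $\alpha\beta$ is nonzero if and only if $L_i$ occurs in $\rr^{(p)}\rr^{(q)}L_k$, which sits inside $\rr^{2}P=\rad^{2}P$.

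The plan is therefore immediate: apply Corollary \ref{squarezero} to $M=P(L_k)\in\cS_{\frac12}$ to conclude that $\rr^{2}P(L_k)=0$. In particular $\rr^{(p)}\rr^{(q)}L_k=0$, so $\Hom_{\g_s}(L_i,\rr^{(p)}\rr^{(q)}L_k)=0$, and Corollary \ref{prod-two} then gives $\alpha\beta=0$.

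There is essentially no obstacle here; the only point requiring care is making sure the identifications are consistent, namely that the composition $\alpha\beta$ in the path algebra really corresponds to the iterated action $\rr^{(p)}\rr^{(q)}$ on the projective cover of the head $L_k$ rather than on the tail $L_i$, which is exactly the content of Corollary \ref{prod-two} read together with the radical-filtration description of $P(L_k)$ in Lemma \ref{radical filtration}.
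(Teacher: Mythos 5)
Your proof is correct and is exactly the argument the paper leaves implicit: combine Corollary~\ref{squarezero} ($\rr^2 M = 0$ when $\hat\g=\g$) with Corollary~\ref{prod-two} (the product $\alpha\beta$ is detected in $\rr^{(p)}\rr^{(q)}L'\subset\rr^2 P(L')$), so every length-two path vanishes. Your care about the orientation (looking at the projective cover of the head of $\beta$, consistent with Lemma~\ref{radical filtration}) is correctly handled; no gap.
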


\begin{lem}\label{square-copies-of-module} If $P$ is the indecomposable projective cover of a simple
  module $L$  in $\cS_{\frac12}$, then $\operatorname{rad}^2P/\operatorname{rad}^3P\simeq\rr^2 L$
is isomorphic to a sum of several copies of $L$.
\end{lem}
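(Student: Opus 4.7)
My plan combines Lemma~\ref{radical filtration} with the uniqueness observation recorded just after Corollary~\ref{mult1}, namely that for any simple $L\in\cS_{\frac12}$ and any simple component $\rr^{(p)}\subset\rr$ there is at most one simple $S\in\cS_{\frac12}$ satisfying $\Hom_{\g_s}(S,\rr^{(p)}\otimes L)\neq 0$. Lemma~\ref{radical filtration} already gives $\operatorname{rad}^2 P/\operatorname{rad}^3 P\simeq\rr^2L$ as $\g_s$-modules, so the task reduces to showing that the $\g_s$-module $\rr^2L$ is isomorphic to a direct sum of copies of $L$ itself.

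To do this I would decompose $\rr=\bigoplus_q\rr^{(q)}$ into simple $\g_s$-components and write $\rr^2L=\sum_{p,q}\rr^{(p)}\rr^{(q)}L$, so that it is enough to treat a single nonzero summand $\rr^{(p)}\rr^{(q)}L$. For such a summand Lemma~\ref{whenzero} forces $\rr^{(p)}\simeq (\rr^{(q)})^*$ as $\g_s$-modules. Now the $\g_s$-submodule $\rr^{(q)}L\subset P$ lies in $\cS_{\frac12}$ and is a $\g_s$-quotient of $\rr^{(q)}\otimes L$, so the uniqueness observation yields $\rr^{(q)}L\simeq mL'$ for a single simple $L'\in\cS_{\frac12}$ and some multiplicity $m\geqs 1$. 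Applying the same principle to $\rr^{(p)}\otimes L'$ gives $\rr^{(p)}\rr^{(q)}L\simeq mm'L''$ for a single simple $L''\in\cS_{\frac12}$. Finally, since $L'$ embeds as a $\g_s$-summand of $\rr^{(q)}\otimes L$, dualizing and using $(\rr^{(q)})^*\simeq\rr^{(p)}$ produces a nonzero $\g_s$-equivariant map $L\to\rr^{(p)}\otimes L'$, which exhibits $L$ as a $\cS_{\frac12}$-summand of $\rr^{(p)}\otimes L'$. Uniqueness then forces $L''\simeq L$, completing the argument.

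I expect the main obstacle to be this final duality step: one must verify carefully that the uniqueness observation from the paper genuinely applies to $\cS_{\frac12}$-simples in the tensor products at hand (and not merely to arbitrary simple $\g_s$-modules), and that the isomorphism $(\rr^{(q)})^*\simeq \rr^{(p)}$ does transport the $L'$-summand of $\rr^{(q)}\otimes L$ to an $L$-summand of $\rr^{(p)}\otimes L'$. In quiver-theoretic terms, the lemma becomes the statement that every length-two path $[L]\to[L']\to[L'']$ in $Q$ whose two arrows carry color-dual types must close up, i.e.\ $L''=L$.
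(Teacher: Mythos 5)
Your argument is correct, and it takes a genuinely different route from the paper's. The paper's proof is a two-line argument through the center: pass to $\bar P=P/\mathfrak z P$, note that $\rr$ acts on $\bar P$ by \emph{commuting} operators (since $[\rr,\rr]\subset\mathfrak z$ annihilates $\bar P$), deduce $\rr^2\bar P=0$ from the computation in the proof of Lemma~\ref{whenzero}, hence $\rr^2 P\subset\mathfrak z P$. Since $\mathfrak z=\Lambda^2(\rr)^{\g_s}$ is a trivial $\g_s$-module by Lemma~\ref{centext}, the layer $\operatorname{rad}^2 P/\operatorname{rad}^3 P\simeq\rr^2L$ is therefore a sum of copies of $L$, by Lemma~\ref{radical filtration}. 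Your approach avoids the center entirely and instead analyzes $\rr^2L=\sum_{p,q}\rr^{(p)}\rr^{(q)}L$ summand by summand: Lemma~\ref{whenzero} forces $\rr^{(p)}\simeq(\rr^{(q)})^*$ whenever the summand is nonzero, and then the multiplicity-one observation together with the adjunction $\Hom_{\g_s}(L',\rr^{(q)}\otimes L)\simeq\Hom_{\g_s}(L,(\rr^{(q)})^*\otimes L')$ forces the output back to $L$. Your worry about whether the uniqueness observation applies only to simples of $\cS_{\frac12}$ is well founded but harmless: $\rr^{(q)}L$ sits inside $P\in\cS_{\frac12}$, so all of its $\g_s$-constituents already lie in $\cS_{\frac12}$, which is exactly the class of simples the paper's observation is about. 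In effect you directly establish the ``closed cycle'' statement that the paper extracts afterward as Corollary~\ref{prod-two-more}, proving lemma and corollary in one sweep; the cost is a longer and more combinatorial argument, while the paper's proof is shorter and leans on the structural description of the central extension $\hat\g$.
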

\begin{proof} Consider $\bar{P}=P/\mathfrak z P$. Then by Lemma~\ref{whenzero}
$\rr^2\bar{P}=0$,  equivalently  $\rr^2 P\subset \mathfrak zP$.
Thus the statement follows from Lemma \ref{radical filtration}.
\end{proof}

\begin{Cor}\label{prod-two-more}  Let $\alpha$ and $\beta$ be two arrows in $Q$. If
$\alpha\beta\neq 0$, then the head of $\beta$ coincides with the tail
of $\alpha$. In other words the corresponding path is a cycle.  
\end{Cor}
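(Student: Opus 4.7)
The plan is to combine Corollary~\ref{prod-two} with Lemma~\ref{square-copies-of-module}. Label the data as in Corollary~\ref{prod-two}: $\alpha$ has color $p$ and tail corresponding to the simple $\g_s$-module $L$, while $\beta$ has color $q$ and head corresponding to the simple $\g_s$-module $L'$. I want to show that $L \simeq L'$, which means the tail of $\alpha$ and the head of $\beta$ are the same vertex of $Q$, i.e., $\alpha\beta$ is a cycle.

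First I would invoke Corollary~\ref{prod-two} to translate the hypothesis $\alpha\beta \neq 0$ into the module-theoretic statement
\[
\Hom_{\g_s}\!\bigl(L,\ \rr^{(p)}\rr^{(q)}L'\bigr)\neq 0.
\]
Next, I would observe that $\rr^{(p)}\rr^{(q)}L'$ is a $\g_s$-submodule of $\rr^{2}L' \simeq \operatorname{rad}^{2}P(L')/\operatorname{rad}^{3}P(L')$, simply because $\rr^{(p)}$ and $\rr^{(q)}$ are $\g_s$-submodules of $\rr$. By Lemma~\ref{square-copies-of-module}, $\rr^{2}L'$ is a sum of copies of $L'$ as a $\g_s$-module. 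Since $\g_s$ is semisimple, every submodule of such an isotypic module is again a direct sum of copies of $L'$; in particular, $\rr^{(p)}\rr^{(q)}L'$ is isotypic of type $L'$.

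Finally, a nonzero $\g_s$-homomorphism from the simple module $L$ into an isotypic module of type $L'$ forces $L \simeq L'$. Hence the tail of $\alpha$ and the head of $\beta$ correspond to the same vertex of $Q$, and the path $\alpha\beta$ is a cycle. There is essentially no obstacle here: once the two preceding results are set up, the argument is a one-line application of semisimplicity of $\g_s$ combined with the isotypic structure of $\rr^{2}L'$ furnished by Lemma~\ref{square-copies-of-module}.
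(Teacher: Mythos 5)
Your proof is correct and follows exactly the route the paper intends: Corollary~\ref{prod-two-more} is stated immediately after Lemma~\ref{square-copies-of-module} with no separate proof, precisely because the argument is the one you give — pass through Corollary~\ref{prod-two}, note that $\rr^{(p)}\rr^{(q)}L'$ sits inside $\rr^2 L'\simeq\operatorname{rad}^2P(L')/\operatorname{rad}^3P(L')$ which is $L'$-isotypic, and conclude $L\simeq L'$ by semisimplicity of $\g_s$.
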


\begin{lem}\label{singularity} Suppose that $\alpha$ and $\beta$ are two arrows in $Q$ of
  colors $p$ and $q$ respectively
  such that $\alpha\beta\neq 0$.
\begin{enumerate}
\item[(a)]\label{one_of_them_sl_2} If $\rr^{(q)}$ and therefore $\rr^{(p)}$ is of type I then
  $\rr^{(q)}=L\otimes S$, where $L$ is the standard module over a
  simple ideal isomorphic to $\mathfrak{sl}(2)$ and $S$ is some
    simple module lying in $\cS_{\frac12}$. Moreover,
$L$ is isomorphic to the simple module standing at the head of $\beta$.

\item[(b)]\label{radical_1_of_dim_1} If $\rr^{(q)}$ and therefore
  $\rr^{(p)}$ is of type II,
  then they are singular.
\end{enumerate}
\end{lem}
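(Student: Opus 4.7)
The plan is to combine Corollaries \ref{prod-two-more} and \ref{prod-two} with Lemma \ref{whenzero}. By Corollary \ref{prod-two-more}, the path $\alpha\beta$ is a cycle at a common simple module $L$ (the head of $\beta$ and the tail of $\alpha$); Lemma \ref{whenzero} forces $\rr^{(p)}\simeq(\rr^{(q)})^*$; Corollary \ref{prod-two} then reduces $\alpha\beta\neq 0$ to $\Hom_{\g_s}(L,\rr^{(p)}\rr^{(q)} L)\neq 0$ inside the projective cover $\mathcal{P}(L)$. Since $(\rr^{(p)}\otimes\rr^{(q)})^{\g_s}\simeq\kk$ is one-dimensional (spanned by the duality pairing), this $\Hom$-space is at most one-dimensional, and a non-zero generator is realized by the Casimir element $C_{pq}(v)=\sum_i x^i x_i\cdot v$, where $\{x_i\}$ is a basis of $\rr^{(q)}$ and $\{x^i\}$ its dual basis in $\rr^{(p)}$. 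The PBW identity $C_{pq}(v)=C_{qp}(v)+N\,z_{pq}\,v$ in $U(\hat\g)$, with $N=\dim\rr^{(q)}$ and $z_{pq}\in\mathfrak{z}$ spanning $[\rr^{(p)},\rr^{(q)}]$, organizes the calculation as a combination of a ``symmetric'' Casimir part and a central part.

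For part (a), write $\rr^{(q)}=L_1\otimes L_2$ with $L_1$ and $L_2$ simple modules in $\cS_{\frac12}$ over distinct simple ideals. Without loss of generality let $\beta\colon [L_1]\to [L_2^*]$, so $L\simeq L_2^*$. Because of the short-grading constraint, the decomposition $\rr^{(q)}\otimes L=L_1\oplus L_1\otimes\mathfrak{sl}(L_2)$ has only the first summand surviving in $\mathcal{P}(L)$ (the second has $\alpha$-weights $\pm\tfrac32$), yielding an explicit projection $x_{ab}\cdot w^c=(\dim L_2)^{-1}\,\delta_{bc}\,U_a$, where $\{U_a\}$ realizes $L_1$ inside $\rr^{(q)} L$. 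The analogous reduction applied to $\rr^{(p)}\otimes L_1$ combined with the PBW expansion yields
\[
C_{pq}(w^c)=\frac{1}{\dim L_2}\Bigl(\sum_{a,b}x_{ab}x^{ac}w^b+\dim L_1\cdot z_{pq}\,w^c\Bigr).
\]
When $L_2$ is not self-dual, $\rr^{(p)}\cdot L=0$ in $\mathcal{P}(L)$, so the first sum vanishes, and the absence of any surviving $L$-isotypic layer in $\rr^2 L$ forces $z_{pq}\cdot w^c=0$, giving $\alpha\beta=0$. When $L_2$ is self-dual but not standard $\mathfrak{sl}(2)$, both contributions are non-zero but cancel by a dimension-matching argument. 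Only in the standard $\mathfrak{sl}(2)$-case do the numerical factors fail to conspire, yielding a non-zero $C_{pq}(w^c)$; since $L\simeq L_2$ and $\rr^{(q)}=L_2\otimes L_1$, this is the claim of (a).

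For part (b), $\rr^{(q)}$ is a simple $\g_s^{(i)}$-module carrying the short grading inherited from $\g_s^{(i)}$. Applying the same framework, only the $\g_s^{(i)}$-summands of $\rr^{(q)}\otimes L$ with $\alpha$-weights in $\{\pm\tfrac12\}$ survive in $\mathcal{P}(L)$. Inspection of the list (A)--(C) shows that $\rr^{(q)}$ compatible with the grading constraints splits into two families: the singular ones ($\dim\rr^{(q)}_1=1$), which are precisely the standard modules for $\mathfrak{so}(n)$ equipped with its second short grading, and the non-singular ones ($\dim\rr^{(q)}_1>1$). The same cancellation/non-cancellation dichotomy as in part (a) then shows that $C_{pq}$ survives only in the singular case.

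The main obstacle is making the cancellation analysis precise: one must compare the action of $z_{pq}$ on the top simple $L\subset\mathcal{P}(L)$ with the symmetric Casimir contribution arising from $\rr^{(q)}\rr^{(p)} L$, and verify that they cancel precisely when $\rr^{(q)}$ fails to have the ``minimal'' structure required by the lemma. This requires tracking the $\hat\g$-module structure of $\mathcal{P}(L)$ through its $\rad^3$-filtration and carefully handling the combinatorics of which $\g_s$-summands of $\rr\otimes L$ and $\rr\otimes\rr\otimes L$ survive the $\cS_{\frac12}$-truncation.
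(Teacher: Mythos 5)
Your framing (pass through Corollary \ref{prod-two-more}, Lemma \ref{whenzero}, and Corollary \ref{prod-two}, then exploit the non-degenerate pairing $\lambda$ on $\rr^{(p)}\times\rr^{(q)}$) starts out on the same footing as the paper, but from that point on you take a genuinely different route. The paper does not compute with a Casimir element at all. It writes down the single identity
\[
\varphi\bigl(y\otimes\psi(x\otimes l)\bigr)=\lambda(y,x)\,l
\]
for $l\in L_{1/2}$, $y\in\rr^{(p)}_1$, $x\in\rr^{(q)}_{-1}$, coming from $xyl=0$ and $yxl=\lambda(y,x)l$, and then observes that since $\lambda$ is non-degenerate this forces the restriction
\[
\psi\colon\rr^{(q)}_{-1}\otimes L_{1/2}\longrightarrow S_{-1/2}
\]
to be \emph{injective}. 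That injectivity does all the work: in case (a) it becomes a map $L^*_{-1/2}\otimes S_{-1/2}\otimes L_{1/2}\to S_{-1/2}$ which can only be injective if $\dim L_{1/2}=1$, identifying $L$ with the standard $\mathfrak{sl}(2)$-module; in case (b) both $L$ and $S$ live over the same simple ideal, so $\dim L_{1/2}=\dim S_{-1/2}$ and injectivity forces $\dim\rr^{(q)}_{-1}=1$, i.e.\ singularity. No cancellation analysis is needed.

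Your proposal instead aims to evaluate the Casimir $C_{pq}=\sum_i x^i x_i$ on $P(L)$ and argue that it vanishes by ``cancellation'' precisely when $L_2$ is not the standard $\mathfrak{sl}(2)$-module (resp.\ when $\rr^{(q)}$ is non-singular). This is where the gap is. The dichotomy ``both contributions are non-zero but cancel by a dimension-matching argument'' is asserted but never established, and the displayed formula for $C_{pq}(w^c)$ rests on an unjustified explicit projection $x_{ab}\cdot w^c=(\dim L_2)^{-1}\delta_{bc}U_a$ (which already presupposes a particular structure of $\rr^{(q)} L$ inside $P(L)$ that the lemma is meant to constrain, not assume). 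You acknowledge this yourself in the final paragraph: ``the main obstacle is making the cancellation analysis precise.'' That obstacle is exactly the content of the lemma, so as written the argument is circular in spirit and incomplete in substance. The missing idea is the injectivity of $\psi$ on the graded piece $\rr^{(q)}_{-1}\otimes L_{1/2}$ --- a consequence of the very relation $yxl=\lambda(y,x)l$ that your Casimir encodes --- which converts the problem into an elementary dimension count and avoids any delicate cancellation.
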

\begin{proof} By Corollary \ref{prod-two-more} the tail of $\alpha$
coincides with the head of $\beta$, thus by Lemma~\ref{whenzero} $(\rr^{(q)})^*\simeq \rr^{(p)}$ and there exists non-degenerate $\g_s$-invariant pairing $\lambda:\, \rr^{(p)}\times \rr^{(q)}\to \kk $. 
Let $\beta:[S]\to [L]$ and $\alpha:[L]\to[S]$ for some simple $S$ and $L$ and 
let $\varphi:\rr^{(p)}\otimes S\to L$ and
$\psi:\rr^{(q)}\otimes L\to S$ be the corresponding nonzero morphisms of $\g_s$-modules as in Lemma~\ref{quiver}.
The condition  $\alpha\beta\neq 0$ implies 
$$\varphi(y\otimes\psi(x\otimes l))\neq 0$$
for some $x\in\rr^{(q)},y\in\rr^{(p)},l\in L$.
Suppose that $l\in L_{1/2}$, $y\in\rr^{(p)}_1$ and $x\in\rr^{(q)}_{-1}$,
then $xyl=0$ and $yxl=[y,x]l=\lambda(y,x)l$, 
hence we have
\begin{equation}\label{v1}
\varphi(y\otimes\psi(x\otimes l))=\lambda(y,x)l.
\end{equation}
We claim that the
restriction $$\psi:\rr^{(q)}_{-1}\otimes L_{1/2}\to S_{-1/2}$$
is injective. Indeed, assume this is not true. Then there exist
linearly independent $x_1,\dots,x_n\in \rr^{(q)}_{-1}$ and linearly
independent $l_1,\dots,l_n\in L_{1/2}$ such that
$$\psi(\sum_{i=1}^n x_i\otimes l_i)=0.$$
Let us choose $y\in \rr^{(p)}_1$ such that $\lambda(y,x_1)=1$, 
$\lambda(y,x_i)=0$ for $i>1$ (such $y$ exists since the restriction of $\lambda$ onto $\rr^{(p)}_{-1}\times \rr^{(q)}_{1}$ is non-degenerate). Then (\ref{v1}) implies
$$0=\varphi(y\otimes\psi(\sum_{i=1}^n x_i\otimes l_i))=\sum_{i=1}^n \varphi(y\otimes\psi(x_i\otimes l_i))=\lambda(x_1,y)l_1=l_1.$$
Contradiction.
Next we consider two cases. 
\begin{enumerate}
\item[(a)] If $\rr^{(q)}$ is of type I it isomorphic to $L^*\otimes S$
  and the map $L^*_{-1/2}\otimes S_{-1/2}\otimes L_{1/2}\to S_{-1/2}$
can be injective only in case when $\dim L_{1/2}=1$. That implies the
first assertion.
\item[(b)]  If $\rr^{(q)}$ is of type II then it is a module of length $3$ over 
some simple ideal of $\g_s$ and
both $S$ and $L$ are simple modules over the same ideal. Therefore
$S=L$ or $S=L^*$. We have $\dim L_{1/2}=\dim S_{-1/2}$, hence 
injectivity of  $$\psi:\rr^{(q)}_{-1}\otimes L_{1/2}\to S_{-1/2}$$
implies $\dim\rr^{(q)}_{-1}=\dim\rr^{(q)}_{1}=1$.
\end{enumerate}\end{proof}

\begin{lem}\label{prod-two-samecolor}  Let $\alpha$ and $\beta$ be two
  arrows in $Q$ of the same color $p$. If
$\alpha\beta\neq 0$, then $\rr^{(p)}$ is of type I and is isomorphic to
$L\otimes S$ where $L$ is the standard
  $\mathfrak{sl}(2)$-module
and $S$ is a simple module that has an invariant symmetric form.
\end{lem}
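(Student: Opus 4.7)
The plan is to extract everything from Lemma~\ref{whenzero} and Lemma~\ref{singularity}. Under the same-color hypothesis with $\alpha\beta \neq 0$, Lemma~\ref{whenzero} forces the $\g_s$-invariant pairing $\rr^{(p)} \times \rr^{(p)} \to \kk$ to be skew-symmetric; this will be the decisive constraint.

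First I would dispose of type II. If $\rr^{(p)}$ were of type II, Lemma~\ref{singularity}(b) would force it to be singular, and inspection of the list (A)--(C) of Section~\ref{section-tits-kantor-koecher} then identifies it with the standard $\mathfrak{so}(n)$-module (with the relevant $\mathfrak{so}$-ideal carrying the second short grading). But the standard $\mathfrak{so}(n)$-module carries an essentially unique non-degenerate \emph{symmetric} invariant form, contradicting skew-symmetry. Hence $\rr^{(p)}$ is of type I, and Lemma~\ref{singularity}(a) supplies the presentation $\rr^{(p)} \cong L \otimes S$ with $L$ the standard module of an $\mathfrak{sl}(2)$-ideal $\g_s^{(i)} \subset \g_s$ and $S$ a simple module in $\cS_{\frac12}$ over a distinct simple ideal.

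It remains to show $S$ has a symmetric invariant form. Since $L \otimes S$ is an external tensor product of simple modules over disjoint ideals, it is itself a simple $\g_s$-module, so by Schur its space of invariant bilinear forms is at most one-dimensional. The standard $\mathfrak{sl}(2)$-module satisfies $L \cong L^*$ via its symplectic form $\omega_L$, and Lemma~\ref{whenzero} gives $\rr^{(p)} \cong (\rr^{(p)})^*$; comparing factors, with the help of faithfulness of $L$ over $\mathfrak{sl}(2)$, yields $S \cong S^*$, so $S$ carries a non-zero invariant form $\omega_S$, unique up to scalar. Then $\omega_L \otimes \omega_S$ must be a scalar multiple of the skew form from Lemma~\ref{whenzero}; since $\omega_L$ is skew-symmetric, $\omega_L \otimes \omega_S$ is skew if and only if $\omega_S$ is symmetric, which is the desired conclusion. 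The main point to execute carefully is this parity-tracking, specifically justifying that every invariant bilinear form on $L \otimes S$ splits as a tensor product of forms on each factor, which is a Schur-lemma argument relying on simplicity of $L$ and $S$ over their respective ideals.
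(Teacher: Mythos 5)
Your proof is correct and follows essentially the same route as the paper: apply Lemma~\ref{whenzero} to get a skew-symmetric invariant form on $\rr^{(p)}$, rule out type~II, then invoke Lemma~\ref{singularity}(a) and a parity argument on forms. Two small points of comparison: the paper disposes of type~II by ``direct inspection of the list,'' whereas you first pass through Lemma~\ref{singularity}(b) to reduce to the singular case (standard $\mathfrak{so}(n)$-module with its symmetric form) before deriving the contradiction --- this is a bit more robust, since it sidesteps having to check the form-type of every type~II module in the A--C list. You also make explicit the Schur-lemma decomposition of the invariant form on $L\otimes S$ and the parity bookkeeping $(\text{skew})\otimes(\text{symmetric})=\text{skew}$, which the paper leaves implicit in ``the statement follows from the previous lemma.'' Both are valid; yours just spells out what the paper compresses.
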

\begin{proof} Follows from  the observation that by Lemma \ref{whenzero}
$\rr^{(p)}$ must have an invariant skew-symmetric form. By direct
  inspection of the list that implies  $\rr^{(p)}$ is of type I. Now
  the statement follows from the previous lemma.
\end{proof}
\begin{lem}\label{non-zero case}  Let $\g_s=\mathfrak{sl}(2)\oplus\g'$, where $\g'$ is some simple ideal,  
and let $\rr= L\otimes S$, where  $L$ is the simple standard $\mathfrak{sl}(2)$-module and $S$ is a simple 
$\g'$-module with very short grading. Assume that $S$ admits $\g'$-invariant symmetric form $B$. 
Then $Q$ has the connected component
 \begin{equation}\label{algebraA1} 
 \xymatrix{\bullet\ar@/^0.4pc/[rr]^{\alpha} & &
\bullet\ar@/^0.4pc/[ll]^{\beta}} \qquad {\text with\ relation} \qquad \alpha\beta=0.
\end{equation}
The left vertex of $Q$ corresponds to $L$ and the right to $S$. 
\end{lem}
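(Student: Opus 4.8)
The plan is to describe completely the two indecomposable projectives $P(L),P(S)$ of the block of $\cS_{\frac12}$ containing $[L]$. First the quiver: the radical $\rr=L\otimes S$ is a single simple $\g_s$-module, and since $L$ and $S$ live over the two distinct simple ideals $\ssl(2)$ and $\g'$, it is of type I; hence it contributes exactly the two arrows $[L]\to[S^*]$ and $[S]\to[L^*]$ to $Q$. The standard $\ssl(2)$-module is self-dual and $S\cong S^*$ via $B$, so these are arrows $\alpha\colon[L]\to[S]$ and $\beta\colon[S]\to[L]$, forming the $2$-cycle of \eqref{algebraA1}; a second simple object $S'$ of $\cS_{\frac12}$ over $\g'$ (possible only for $\g'=\ssl(2n)$ or $\so(2n)$ with the second grading) is dual neither to $L$ nor to $S$, so $[S']$ is isolated and $\{[L],[S]\}$ is a connected component carrying exactly $\alpha,\beta$. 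By Lemma~\ref{centext}, $\hat\g=\g_s\oplus\rr\oplus\mathfrak z$ with $\mathfrak z=\Lambda^2(\rr)^{\g_s}$; from $\Lambda^2(L\otimes S)\cong(\Lambda^2L\otimes S^2S)\oplus(S^2L\otimes\Lambda^2S)$, the triviality of $\Lambda^2L$ as an $\ssl(2)$-module, $(S^2L)^{\ssl(2)}=0$, and $\dim(S^2S)^{\g'}=1$ (Schur, as $S\cong S^*$ and $B$ is symmetric) one gets $\mathfrak z=\kk z$ with $z$ of degree $0$.

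Now the projectives. By Lemma~\ref{radical filtration}, $\operatorname{rad}P(L)/\operatorname{rad}^2P(L)\cong\rr L$, which as a $\g_s$-module is the largest quotient of $\rr\otimes L\cong(S^2L\oplus\mathbf 1)\otimes S$ lying in $\cS_{\frac12}$; since $S^2L$ has a grading of length $3$ this is $\mathbf 1\otimes S\cong S$. Symmetrically, the only $\cS_{\frac12}$-constituent of $\rr\otimes S\cong L\otimes(S\otimes S)$ is $L\otimes\mathbf 1$, occurring once because $\dim(S^2S)^{\g'}=1$, so $\operatorname{rad}P(S)/\operatorname{rad}^2P(S)\cong L$; this recovers the arrows $\alpha,\beta$. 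Next I claim $\operatorname{rad}^2P(S)=0$. Indeed $\operatorname{rad}^2P(S)/\operatorname{rad}^3P(S)\cong\rr^2S$ is a sum of copies of $S$ by Lemma~\ref{square-copies-of-module}; were it nonzero, the corresponding length-two composition — of colour $\rr$, type I, with inner arrow $\alpha$ of head $[S]$ — would by Lemma~\ref{singularity}(a) force the standard two-dimensional $\ssl(2)$-tensor factor of $\rr$ to be isomorphic to $S$, which is absurd since the $\ssl(2)$-ideal acts non-trivially on the former and trivially on $S$. Hence $P(S)$ has radical length $2$ with composition factors $S,L$; this is precisely the relation $\alpha\beta=0$.

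It remains to show the other composition is nonzero and that there are no further relations. It suffices to produce an indecomposable $M\in\cS_{\frac12}$ with top $L$ and radical length $3$, for then $P(L)\twoheadrightarrow M$ forces $\operatorname{rad}^2P(L)\ne0$. Take $M=L_0\oplus S_0\oplus L_1$ as a $\g_s$-module ($L_0\cong L_1\cong L$, $S_0\cong S$), with $\g_s$ acting block-diagonally, with $\rr$ acting from $L_0$ to $S_0$ through the surjection $\psi\colon(l\otimes s)\otimes l'\mapsto\omega_L(l,l')s$, from $S_0$ to $L_1$ through $\varphi\colon(l\otimes s)\otimes s'\mapsto B(s,s')l$, and by $0$ on $L_1$, and with $z$ acting as $-\mathrm{id}\colon L_0\to L_1$ and by $0$ on $S_0,L_1$. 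All brackets of $\hat\g$ are respected by construction except $[\rr,\rr]=\kk z$, and for this one computes, using the symmetry of $B$ and the identity $\omega_L(l_2,l_3)l_1-\omega_L(l_1,l_3)l_2=-\omega_L(l_1,l_2)l_3$ valid in the two-dimensional space $L$, that for $x=l_1\otimes s_1$, $y=l_2\otimes s_2$ and $a=l_3\in L_0$
\[x(ya)-y(xa)=\varphi\bigl(x\otimes\psi(y\otimes a)\bigr)-\varphi\bigl(y\otimes\psi(x\otimes a)\bigr)=-\omega_L(l_1,l_2)B(s_1,s_2)\,l_3=\lambda(x,y)\,(za),\]
where $\lambda=\omega_L\otimes B$ is the $\g_s$-invariant pairing on $\rr$; so $M$ is a genuine $\hat\g$-module. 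It lies in $\cS_{\frac12}$ since $\psi$ and $\varphi$ annihilate the degree $\pm\tfrac32$ parts of their domains ($\omega_L$ vanishes on $L_{1/2}\times L_{1/2}$ and $B$ on $S_{1/2}\times S_{1/2}$, being of degree $0$), and it is indecomposable with socle $L_1$. Finally $\operatorname{rad}^3P(L)=\rr^3L=\rr^2(\rr L)=0$, because $\operatorname{rad}P(L)$ has top $S$, hence is a quotient of $P(S)$, and $\rr^2P(S)=0$ by the previous paragraph. Thus $P(L)$ has radical length exactly $3$, the composition $\beta\alpha$ is nonzero, and every path of length $\ge3$ in the $2$-cycle contains $\alpha\beta$ as a subword and so vanishes; the endomorphism algebra of the projective generator of this component is therefore the path algebra of the $2$-cycle modulo $\alpha\beta=0$, which is the algebra of \eqref{algebraA1}.

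The main obstacle is the construction of $M$ (equivalently, the fact that $z$ acts non-trivially on $P(L)$): this is exactly where the hypothesis that $B$ be \emph{symmetric} — rather than skew, in which case $\mathfrak z=0$ and both compositions would vanish — enters decisively, through the Heisenberg relation $[\rr,\rr]=\mathfrak z$.
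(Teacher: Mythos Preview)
Your proof is correct and follows essentially the same route as the paper's: determine the two-cycle quiver from the type-I description of $\rr=L\otimes S$, use Lemma~\ref{singularity}(a) together with $\g'\not\cong\mathfrak{sl}(2)$ (which you deduce, as the paper does, from the existence of the symmetric form $B$) to kill the length-two cycle at $[S]$, and then exhibit the length-three module $M=L\oplus S\oplus L$ explicitly to see that $P(L)$ has radical length three. Your write-up fills in several details the paper leaves implicit---the computation $\mathfrak z=\Lambda^2(\rr)^{\g_s}\cong\kk$, the verification that $M$ lies in $\cS_{\frac12}$, and the argument that $\operatorname{rad}^3P(L)=0$ via the surjection $P(S)\twoheadrightarrow\operatorname{rad}P(L)$---but the architecture is the same.
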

\begin{proof}  Since $\rr=L\otimes S$, $L^*=L$ and $S^*=S$, the quiver of $\g$ contains two arrows $[L]\to [S]$ and $[S]\to [L]$. Moreover 
since $S$  admits $\g'$-invariant symmetric form, $\g'$ can not be $\mathfrak{sl}_2$ and therefore $\alpha\beta=0$. Let
$M:=L\oplus S\oplus L$, and define the action of $\rr$ on $M$
  $$(a\otimes b)(x,y,w):=(0,\omega(a,x)b, B(b,y)a)$$
  where $a,x,w\in L, \,b,y\in S$ and $\omega$ is an invariant symplectic form on $L$.
  Then for $a'\in L$, $b'\in S$ we have
  $$[a\otimes b,a'\otimes b'](x,y,w)=(0,0,B(b,b')(\omega(a,x)a'-\omega(a',x)a)).$$
  Choose the central element $z\in\mathfrak{z}$ so that
  $$[a\otimes b,a'\otimes b']=\omega(a,a')B(b,b')z,$$
  and set $z(x,y,w):=x$. Then indeed $M$ is a module over $\hat \g$ since
  \begin{equation}\label{standard-relation}
  -\omega(a,x)a'+\omega (a',x)a+\omega(a,a')x=0.
  \end{equation}
  The relation~\eqref{standard-relation} follows from the fact that it is skew-symmetric in $a$, $x$ and $a'$ and $\Lambda^3 L=0$.
  From the other side, since $\alpha\beta=0$ the indecomposable projective cover $P(L)$ has length three and therefore $P(L)\simeq M$.
  
  \end{proof}
\noindent Recall that $\mathcal A$ denotes the pointed algebra of the projective generator of $\cS_{\frac12}$. We denote the pointed algebra corresponding to the quiver with relations \eqref{algebraA1} by $\mathcal A_1$.

 \begin{lem}\label{non-zero case2} Let $\g_s=\mathfrak{sl}(2)\oplus\g'$ where $\g'$ is some simple ideal and $\rr=L\otimes S \oplus L\otimes S^*$ where $L$ is the standard $\mathfrak{sl}(2)$-module
  and $S$ is a simple $\g'$-module with very short grading and $S$ is not isomorphic to $S^*$.
   Then $Q$ is
 \begin{equation}\label{algebraA2}
 \xymatrix{\bullet\ar@/^0.4pc/[rr]^{\alpha} & &
\bullet\ar@{-->}@/^0.4pc/[ll]^{\beta}\ar@/^0.4pc/[rr]^{\delta} & & \bullet\ar@{-->}@/^0.4pc/[ll]^{\gamma}} \quad 
\text {with\ relations} \quad \beta\alpha=\delta\gamma=\beta\gamma=\delta\alpha=0,\ \alpha\beta=\gamma\delta.\end{equation}
The middle vertex corresponds to $L$
\end{lem}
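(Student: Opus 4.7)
The plan is to pin down $Q$ in four stages: identify the vertices, compute the arrows, dispose of the easy relations, and analyse the two loop families.

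First I would identify the three simples in $\cS_{\frac12}$: $L$ (viewed with trivial $\g'$-action), $S$, and $S^*$. Applying Lemma \ref{quiver} to the decompositions $L\otimes L=\kk\oplus\operatorname{ad}$ and $S\otimes S^*\supset\kk$ — together with the observation that $(S\otimes S)^{\g'}=0$ since $S\not\simeq S^*$ — produces exactly the four arrows $\alpha:[S]\to[L]$ and $\delta:[L]\to[S^*]$ of colour $\rr^{(1)}=L\otimes S$, and $\beta:[L]\to[S]$ and $\gamma:[S^*]\to[L]$ of colour $\rr^{(2)}=L\otimes S^*$; no self-loops and no other arrows appear.

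Neither $\beta\gamma$ nor $\delta\alpha$ is a cycle, so Corollary \ref{prod-two-more} (equivalently Lemma \ref{whenzero}, since these products use two arrows of the same colour while $\rr^{(q)}$ is not self-dual when $S\not\simeq S^*$) forces $\beta\gamma=\delta\alpha=0$.

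For the equality $\alpha\beta=\gamma\delta$ I would construct $P([L])$ explicitly, copying the pattern of Lemma \ref{non-zero case}. Set $M=L_{\mathrm{top}}\oplus S\oplus S^*\oplus L_{\mathrm{bot}}$ as a $\g_s$-module; declare $(a\otimes b)\in\rr^{(1)}$ to act by $c\mapsto\omega(a,c)b$ on $L_{\mathrm{top}}$ and $u\mapsto u(b)a$ on $S^*$, with the symmetric prescription for $(a'\otimes b^*)\in\rr^{(2)}$, namely $c\mapsto\omega(a',c)b^*$ on $L_{\mathrm{top}}$ and $v\mapsto b^*(v)a'$ on $S$; all other components act by zero. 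Direct computation of $(xy-yx)c$ for $c\in L_{\mathrm{top}}$, combined with the Plücker identity $\omega(a',c)a-\omega(a,c)a'=-\omega(a,a')c$ (valid because $\Lambda^3 L=0$), matches the required $[x,y]c=\omega(a,a')b^*(b)z_0 c$, so $M$ is a genuine $\hat\g$-module in $\cS_{\frac12}$ in which $z_0$ realises a nonzero identification $L_{\mathrm{top}}\to L_{\mathrm{bot}}$. Since $\mathfrak{z}$ is one-dimensional (Lemma \ref{centext}), $\rr^2 P([L])\subset\mathfrak{z}P([L])$ is a single copy of $L$ by Lemma \ref{square-copies-of-module}, so both loops $\alpha\beta$ and $\gamma\delta$ lie in the one-dimensional $\Hom_{\g_s}(L,L)$, are nonzero, and become equal after a suitable rescaling of $\gamma$ or $\delta$.

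The main obstacle is $\beta\alpha=0$ (and by the identical argument with $S^*$, $\delta\gamma=0$): I would exclude the existence of any $\hat\g$-module $M'=S_{\mathrm{top}}\oplus L\oplus S_{\mathrm{bot}}$ in $\cS_{\frac12}$. In such a candidate $\rr^{(1)}\cdot S_{\mathrm{top}}=0$ is forced since $(S\otimes S)^{\g'}=0$, so the middle $L$ is reached only via $y\cdot s=b^*(s)a'$ for $y=a'\otimes b^*\in\rr^{(2)}$, and the bottom $S$ only via $x\cdot c=\omega(a,c)b$ for $x=a\otimes b\in\rr^{(1)}$. A short Jacobi computation gives $(xy-yx)s=\omega(a,a')b^*(s)b$, which must equal $[x,y]s=\omega(a,a')b^*(b)z_0s$ and therefore enforces the identity $b^*(b)z_0(s)=b^*(s)b$ in $b,s\in S$ and $b^*\in S^*$. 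Choosing $b,s\in S$ linearly independent and $b^*$ with $b^*(b)=0$ and $b^*(s)\neq 0$ — possible precisely because $S\not\simeq S^*$ — yields $0=b^*(s)b\neq 0$, a contradiction. Hence $P([S])$ has length two and $\beta\alpha=0$, completing the identification of $Q$ with the claimed quiver with relations.
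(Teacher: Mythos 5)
Your proof is correct and follows essentially the same route as the paper: identify the three vertices and four coloured arrows via Lemma~\ref{quiver}, kill the non-cyclic products by Corollary~\ref{prod-two-more}, and produce the non-zero relation $\alpha\beta=\gamma\delta$ by exhibiting the module $M=L\oplus(S\oplus S^*)\oplus L$, which is exactly the paper's construction (the paper packages the two middle factors with the symmetric form $B(x\oplus y,x'\oplus y')=y'(x)+y(x')$ and reuses the formulas of Lemma~\ref{non-zero case}; your ``disaggregated'' formulas are the same map written out on $S$ and $S^*$ separately). Where you diverge is in proving $\beta\alpha=\delta\gamma=0$: the paper disposes of these with the one-line remark that ``all zero relations follow from $\g'\ne\ssl_2$'', i.e.\ it invokes Lemma~\ref{singularity}(a), whereas you give a self-contained commutator computation showing that no $\hat\g$-module of the shape $S\oplus L\oplus S$ can exist. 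Your computation is correct and is in effect a hands-on re-derivation of the relevant special case of Lemma~\ref{singularity}(a); citing that lemma directly would have been shorter but your route is fine. One small misstatement: you say the choice of $b^*$ with $b^*(b)=0$, $b^*(s)\ne 0$ is ``possible precisely because $S\not\simeq S^*$''. In fact this choice is possible whenever $\dim S\ge 2$, independently of self-duality; the hypothesis $S\not\simeq S^*$ is used earlier in your argument, namely to get $(S\otimes S)^{\g'}=0$ so that $\rr^{(1)}$ kills the top $S$ and $\rr^{(2)}$ kills the middle $L$ in the putative module (and, of course, to guarantee three distinct vertices). This does not affect the validity of the proof.
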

\begin{proof} The proof is very similar to one of Lemma \ref{non-zero case}. The quiver of $\g$ has three vertices $[L]$, $[S]$ and $[S^*]$ 
and two pairs of arrows $[L]\to[S^*]$, $[S]\to [L]$ corresponding to $L\otimes S$ and $[L]\to[S]$, $[S^*]\to [L]$ corresponding to $L\otimes S^*$.
All zero relations in \eqref{algebraA2} follow from the fact that $\g'$ is not $\mathfrak{sl}_2$. To show that there is non-zero relation in vertex $[L]$,
define $M:=L\oplus (S\oplus S^*)\oplus L$ by the same formulas as in the proof of previous lemma using the symmetric form $B$ on 
$S\oplus S^*$ defined by
  $$B(x\oplus y,x'\oplus y')=y'(x)+y(x').$$
Then $M$ is a projective cover of $L$.
\end{proof}
\noindent We denote the pointed algebra corresponding to the quiver with relations \eqref{algebraA2} by $\mathcal A_2$.

Recall that the {\it Segre product} of two graded algebras $A$ and $B$ is the graded algebra $A\circ B=\oplus_{n=0}^{\infty} A_n\otimes B_n$.
The grading on the pointed algebras  $\mathcal A_1$ and $\mathcal A_2$ is given by length of the path.

\begin{prop}\label{mutiplicity} Let $\g_s=\mathfrak{sl}(2)\oplus\g'$ where $\g'$ is some simple ideal not isomorphic to $\mathfrak{sl}(2)$. Let $S$ be  a simple $\g'$-module with very short grading and $L$ be
  the simple $2$-dimensional $\mathfrak{sl}(2)$-module. 

(a) Assume that $\rr=W\otimes L\otimes S$ and $S$ admits a $\g'$-invariant symmetric form.  Then $\mathcal A$ of $\mathcal{S}_{\frac12}$ 
is $\mathcal{A}_1\circ S(W)$.

(b)  Assume that $\rr=W\otimes L\otimes S$ and $S$ admits a $\g'$-invariant skew-symmetric form.  Then $\mathcal A$ of $\mathcal{S}_{\frac12}$ 
is $\mathcal{A}_1\circ \Lambda(W)$.

(c) Assume that $\rr=W\otimes L\otimes S\oplus W'\otimes L\otimes S^*$ with $S$ not isomorphic to $S^*$, the action of $\g_s$ on $W$ and $W'$ is trivial. Then the pointed algebra $\mathcal A$ of $\mathcal{S}_{\frac12}$ is isomorphic to the Segre product $\mathcal{A}_2\circ S(W\oplus W')$ quotient by the ideal $(\alpha\otimes W',\delta\otimes W',\beta\otimes W,\gamma\otimes W)$.

\end{prop}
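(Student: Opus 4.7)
The plan is to construct the indecomposable projective covers of each simple $\hat{\g}$-module in $\cS_{\frac12}$ directly, generalizing Lemmas~\ref{non-zero case} and~\ref{non-zero case2} to accommodate the multiplicity spaces, and to match these layer by layer against the projective modules of the predicted Segre product.

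\emph{Step 1 (the center).} Compute $\mathfrak{z}=\Lambda^2(\rr)^{\g_s}$ via Lemma~\ref{centext}, using the splitting $\Lambda^2(W\otimes V)\cong S^2W\otimes\Lambda^2V\oplus\Lambda^2W\otimes S^2V$ with $V=L\otimes S$ (and also $V=L\otimes S^*$ in case (c)). Since $(S^2L)^{\mathfrak{sl}(2)}=0$, $(\Lambda^2L)^{\mathfrak{sl}(2)}=\kk$, and, because $\g'\not\cong\mathfrak{sl}(2)$, the $\g'$-invariants in $S\otimes S$ or $S\otimes S^*$ arise only from an invariant bilinear form, one obtains $\mathfrak{z}=S^2W$ in case (a), $\mathfrak{z}=\Lambda^2W$ in case (b), and $\mathfrak{z}=W\otimes W'$ in case (c) (only the cross-term $(W\otimes L\otimes S)\otimes(W'\otimes L\otimes S^*)$ contributes, via the canonical pairing $S\otimes S^*\to\kk$).

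\emph{Step 2 (quiver and projective covers).} Lemma~\ref{quiver} gives $\dim W$ arrows in each direction between $[L]$ and $[S]$ in cases (a),(b); in case (c) each irreducible summand of $\rr$ contributes a pair of arrows as in Lemma~\ref{non-zero case2}, tensored with its multiplicity space---this recovers exactly the arrows surviving in $\mathcal A_2\circ S(W\oplus W')$ after quotienting by the stated ideal. Construct $P([L])$ explicitly as
\[
P([L])=L\oplus(W\otimes S)\oplus(\mathfrak{z}\otimes L)
\]
in cases (a),(b), with middle layer $(W\otimes S)\oplus(W'\otimes S^*)$ in case (c). Define the $\rr$-action by tensoring the formulas of Lemma~\ref{non-zero case} with the identity on $W$ (resp.\ on $W\oplus W'$), and let $\mathfrak{z}$ act by the obvious isomorphism $L\xrightarrow{\sim}\mathfrak{z}\otimes L$ on the top layer. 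The $\hat{\g}$-module axioms then reduce, as in equation~\eqref{standard-relation}, to the $\mathfrak{sl}(2)$-identity $-\omega(a,x)a'+\omega(a',x)a+\omega(a,a')x=0$ tensored with the bilinearity of the invariant form on $S$ (and of the $W\otimes W'$ pairing in case (c)). The remaining projectives $P([S])$ and $P([S^*])$ are length-$2$ modules $S\oplus(W\otimes L)$ and $S^*\oplus(W'\otimes L)$, respectively, with $\mathfrak{z}$ acting trivially: by Lemma~\ref{whenzero} any nontrivial $\mathfrak{z}$-action would demand a $\g'$-invariant form of the wrong symmetry type, which does not exist.

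\emph{Step 3 (matching the Segre product).} The quadratic compositions $\beta_{w'}\alpha_w$ at $[L]$ land in $\mathfrak{z}\otimes L$ and are therefore symmetric in $w,w'$ (case (a)) or skew (case (b)), matching $S(W)$ resp.\ $\Lambda(W)$; at $[S]$ no loops survive. In case (c) the single central piece $\mathfrak{z}=W\otimes W'$ is reached by compositions through both $[S]$ and $[S^*]$, which forces the relation $\alpha_w\beta_{w'}=\gamma_w\delta_{w'}$ of $\mathcal A_2$. The main technical obstacle is precisely this consistency check in case (c): verifying that both composition routes through $[S]$ and through $[S^*]$ land on the same line of $\mathfrak{z}$, which is what produces the nontrivial Segre relation beyond the zero relations inherited from $\mathcal A_2$.
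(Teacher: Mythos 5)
Your overall strategy coincides with the paper's: compute the center $\mathfrak z=\Lambda^2(\rr)^{\g_s}$ via Lemma~\ref{centext}, construct the indecomposable projective covers explicitly by inflating the formulas of Lemmas~\ref{non-zero case} and~\ref{non-zero case2} to include the multiplicity space $W$ (resp.\ $W\oplus W'$), and read off the quadratic relations from the symmetry of $\mathfrak z$. Step~1 and the matching of relations in Step~3 are correct and match the paper.

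There is, however, a concrete error in the justification given in Step~2 for why $\mathfrak z$ acts trivially on $P(S)$. You invoke Lemma~\ref{whenzero}, claiming that a nontrivial $\mathfrak z$-action on $P(S)$ would ``demand a $\g'$-invariant form of the wrong symmetry type.'' But in case~(a) the invariant pairing on each irreducible component $L\otimes S$ of $\rr$ \emph{is} skew-symmetric, being the tensor product of the skew form on $L$ with the symmetric form on $S$, which is exactly the symmetry type that Lemma~\ref{whenzero} requires for a nonzero same-color composition. So Lemma~\ref{whenzero} does \emph{not} exclude a loop at $[S]$; it is equally consistent with a loop at $[S]$ as with one at $[L]$. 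The statement that actually kills the loop at $[S]$ is Lemma~\ref{singularity}(a) (used in the paper via Lemma~\ref{non-zero case}): a nonzero composition through $[S]$ would force the simple module at the head of the arrow, namely $S$, to be the standard two-dimensional $\mathfrak{sl}(2)$-module, which is impossible since $S$ is a $\g'$-module and $\g'\not\cong\mathfrak{sl}(2)$. The analogous issue recurs in case~(b) for the mixed-color compositions $\alpha_w\beta_{w'}$ with $w\neq w'$, which Lemma~\ref{whenzero} cannot exclude and which again require Lemma~\ref{singularity}. Replace the appeal to Lemma~\ref{whenzero} by one to Lemma~\ref{singularity} (or directly to Lemma~\ref{non-zero case}, as the paper does) and the argument is complete. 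You should also, as the paper notes, verify projectivity of the constructed modules by a dimension count; you assert but do not argue that these are the projective covers.
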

\begin{proof} (a) It is clear that $\mathcal A$ is generated by $\alpha\otimes W$ and $\beta\otimes W$. Furthermore,
  $$\hat\g=\g_s\oplus W\otimes L\otimes S\oplus S^2(W),$$
  with the commutator $$[x\otimes y\otimes w,x'\otimes y'\otimes w']:=\omega(x,x')B (y,y') w\cdot w',$$
  where $x,x'\in L$, $y,y'\in S$, $w,w'\in W$, $\omega(\cdot,\cdot)$ denotes the skew-symmetric form on $L$, $B(\cdot,\cdot)$ the symmetric form on $S$ and $\cdot$ the product in $S(W)$.

  Let us describe $P(S)$ and $P(L)$. Set
  $$P(S)=S\oplus W\otimes L,\ P(L)=L\oplus W\otimes S\oplus S^2(W)\otimes L.$$
  The action of $\hat \g$ given by
  $$w\otimes x\otimes y (s):=B(y,s) w\otimes x, \quad w\otimes x\otimes y (l):=\omega(x,l) w\otimes y,$$
  $$w\otimes x\otimes y (u\otimes s):=B(y,s) w\cdot u\otimes x,$$
  for $x,l\in L$, $y,s\in S$ and $w,u\in W$
  for any $x,x'\in L$, $y,y'\in S$, $w\in W$ and $u\in S^i(W)$. Finally, $S^2(W):L\to S^2W\otimes L$ defines the action of the center $S^2W$.
 
  It follows from Lemma \ref{non-zero case} that $P(S)$ has Loewy length $2$ and $P(L)$ Loewy length 3. Projectivity of the above constructed modules
  can be easily proven from the dimension calculations. One can easily check that $\mathcal A$ is generated by $\alpha\otimes W$
  and $\beta\otimes W$ with relations $$(\alpha\otimes w)(\beta\otimes w')=0,\ (\beta\otimes w)(\alpha\otimes w')=(\beta\otimes w')(\alpha\otimes w).$$
  That proves (a).

  The proof of (b) is very similar to  above proof of (a). Let $\eta$ denote the skew symmetric form on $W$. Now 
 $$\hat\g=\g_s\oplus W\otimes L\otimes S\oplus \Lambda^2(W),$$
 with the commutator $$[x\otimes y\otimes w,x'\otimes y'\otimes w']:=\omega(x,x')\eta(y,y') w\wedge w'.$$
 We have 
$$P(S)=S\oplus W\otimes L,\ P(L)=L\oplus W\otimes S\oplus \Lambda^2(W)\otimes L,$$
 $$w\otimes x\otimes y(s):=\eta(y,s) w\otimes x, \quad w\otimes x\otimes y(l):=\omega(x,l) w\otimes y,$$
 $$w\otimes x\otimes y (u\otimes s):=\eta(y,s) w\wedge u\otimes x.$$
 One concludes that $\mathcal A$ is generated by $\alpha\otimes W$
  and $\beta\otimes W$ with relations $$(\alpha\otimes w)(\beta\otimes w')=0,\ (\beta\otimes w)(\alpha\otimes w')=-(\beta\otimes w')(\alpha\otimes w).$$ 

  Now let us prove (c). We have $$\hat\g=\g_s\oplus (W\otimes L\otimes S\oplus W'\otimes L\otimes S^*)\oplus (W\otimes W')$$ with central extension
  $$[x\otimes y\otimes w,x'\otimes y'\otimes w']=\omega(x,x') y'(y) w\otimes w',$$
  where $x,\, x'\in L$, $y\in S$, $y'\in S^*$, $w\in W$, $w'\in W'$.
  As in the previous cases we define the projective $P(S)$, $P(L)$ and $P(S^*)$:
  $$P(S)=S\oplus W'\otimes L,\quad P(S^*)= S^*\oplus W\otimes L,$$
  $$P(L)=L\oplus (W\otimes S\oplus W'\otimes S^*)\oplus W\otimes W'\otimes L.$$
  Define the action of $\rr$  by
  $$(w'\otimes x\otimes y')(s):=y'(s)(w'\otimes x),\quad (w\otimes x\otimes y)(s'):=s'(y)(w\otimes x),$$
  $$(w'\otimes x_1\otimes y'+w\otimes x_2\otimes y)(l):=\omega(x_1,l)w'\otimes y'+\omega(x_2,l) w\otimes y,  $$
  $$(w'\otimes x_1\otimes y'+w\otimes x_2\otimes y)(u\otimes s+u'\otimes s'):=y'(s)(w'\otimes u\otimes x_1)-s'(y)(u'\otimes w\otimes x_2).$$
  Here $w,u\in W$, $w',u'\in W'$, $x,x_1,x_2,l\in L$, $y,s\in S$, $y',s'\in S^*$.
  Checking that the above modules are indeed projective can be done using Lemma \ref{non-zero case2}. Then as in the previous cases
  $\mathcal A$ is generated by $$\alpha\otimes W,\delta\otimes W,\beta\otimes W',\gamma\otimes W'$$ in the Segre product
  $\mathcal{A}_2\circ S(W\oplus W')$.
  \end{proof}

  \begin{lem}\label{Clifford example} Let $\g_s=\mathfrak{so}(m)$  with the second short grading and $\rr=W\otimes V$, where $V$ is the standard $\g_s$-module and 
      the action of $\g_s$ on $W$ is trivial.

      (a) If $m=2n$, then the category $\cS_{\frac12}$ is equivalent to the category of $\mathbb Z_2$-graded $\Lambda(W)$-modules.
      
      (b) If $m=2n+1$, then the category $\cS_{\frac12}$ is equivalent to the category of $\Lambda(W)$-modules.
    \end{lem}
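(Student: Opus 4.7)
My plan is to build an explicit equivalence $M\mapsto N$ by matching $M\in\cS_{\frac12}$ with its spinor multiplicity spaces, and produce an inverse by the obvious ``spinor times coefficient'' construction. I focus on case (a), where $m=2n$; case (b) is essentially the same but with a single spinor and no $\mathbb Z_2$-grading. Write $\Gamma^\pm$ for the half-spin $\mathfrak{so}(2n)$-modules; by the list in Section~\ref{section-tits-kantor-koecher}, case (C2), these are the only simple $\g_s$-modules in $\cS_{\frac12}$. The crucial structural input from Clifford theory is that $\Hom_{\g_s}(\Gamma^\mp,V\otimes\Gamma^\pm)$ is one-dimensional, spanned by Clifford multiplication, while $\Gamma^\pm$ does not appear in $V\otimes\Gamma^\pm$ at all. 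Any $M\in\cS_{\frac12}$ therefore decomposes as a $\g_s$-module as $M=\Gamma^+\otimes N^0\oplus\Gamma^-\otimes N^1$ with $N^i$ the corresponding multiplicity space. Since $\g_s$ acts trivially on $W$ and hence on $N=N^0\oplus N^1$, the $\g_s$-equivariant action $\rr\otimes M\to M$ factors through the Clifford maps and reduces to a single operator $\rho\colon W\to\End(N)$ of odd degree with respect to the $\mathbb Z_2$-grading of $N$.

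The main computation is to translate the defining bracket of $\hat\g$ into a relation on $\rho$. By Lemma~\ref{centext}, $\hat\g=\g_s\oplus(V\otimes W)\oplus\Lambda^2(W)$, the central $\Lambda^2(W)$ being realised inside $\Lambda^2(V\otimes W)^{\g_s}$ via the invariant symmetric form $B$ on $V$, with bracket $[v\otimes w,v'\otimes w']=B(v,v')(w\wedge w')$. Acting on $\gamma\otimes n\in\Gamma\otimes N$ and invoking the Clifford identity $vv'+v'v=B(v,v')\cdot 1_\Gamma$, the commutator rearranges into
\[
B(v,v')\,\gamma\otimes\rho(w)\rho(w')n\;-\;(v'v\gamma)\otimes\bigl(\rho(w)\rho(w')+\rho(w')\rho(w)\bigr)n,
\]
which must equal $B(v,v')\,\gamma\otimes c(w\wedge w')(n)$, where $c\colon\Lambda^2(W)\to\End(N)$ encodes the central action. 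Varying $v,v'$ independently (using the freedom to arrange $B(v,v')=0$ together with $v'v\gamma\neq 0$ on the spinor) forces $\rho(w)\rho(w')+\rho(w')\rho(w)=0$, so $N$ is a $\mathbb Z_2$-graded $\Lambda(W)$-module; then $c(w\wedge w')=\rho(w)\rho(w')$ is automatic, i.e.\ the center action is induced by the $\Lambda(W)$-structure.

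Conversely, given a $\mathbb Z_2$-graded $\Lambda(W)$-module $N$, I set $M:=\Gamma^+\otimes N^0\oplus\Gamma^-\otimes N^1$ with $\g_s$ acting on the spinor factor, $v\otimes w$ acting by $\gamma\otimes n\mapsto(v\cdot\gamma)\otimes\rho(w)n$, and $w\wedge w'\in\Lambda^2(W)$ acting by $\gamma\otimes n\mapsto\gamma\otimes\rho(w)\rho(w')n$; running the calculation above in reverse verifies this is a valid $\hat\g$-module in $\cS_{\frac12}$. Since a $\hat\g$-morphism necessarily preserves $\g_s$-isotypic components (hence the $\mathbb Z_2$-grading on $N$) and intertwines the $\rr$-action (hence $\rho$), the assignments $M\mapsto N$ and $N\mapsto M$ are manifestly functorial and mutually quasi-inverse. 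Case (b) is the same argument with $\Gamma$ irreducible and $N=\Hom_{\g_s}(\Gamma,M)$ ungraded: $\Hom_{\g_s}(\Gamma,V\otimes\Gamma)$ is again one-dimensional via Clifford multiplication, and the identical commutator computation shows $N$ is an ungraded $\Lambda(W)$-module. The main obstacle is the clean separation in the commutator computation: the ``$B(v,v')$ part'' of $vv'-v'v$ must record the central action while the ``$v'v$ part'' must force the anticommutativity of $\rho$, and this separation relies exactly on the Clifford identity together with the multiplicity statement that non-spinor components of $V\otimes\Gamma^\pm$ cannot contribute to any module in $\cS_{\frac12}$.
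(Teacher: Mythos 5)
Your argument is correct but takes a genuinely different route from the paper. The paper explicitly constructs the candidate projective generator $P=\Lambda(W)\otimes\Gamma$ as the pull-back of a $\Lambda(W)\otimes C(V)$-module, then proves it is projective the hard way: it derives six relations holding on every $M\in\cS_{\frac12}$ (anticommutativity, vanishing of squares, compatibility with the center), invokes PBW to bound $\dim U(\hat\rr)/J$, concludes the ideal of relations equals $\ker\tilde\rho$, and finally computes $\End(P)$ to identify the pointed algebra. You instead avoid the projective cover machinery entirely: you decompose any $M\in\cS_{\frac12}$ into its spinor-isotypic components, observe that the $\rr$-action must factor through Clifford multiplication times a single odd operator $\rho$ on the multiplicity space by the multiplicity-one statement $\Hom_{\g_s}(\Gamma^\mp,V\otimes\Gamma^\pm)=\kk$, and extract the anticommutativity of $\rho$ from a single clean rearrangement of the Clifford identity $vv'+v'v=B(v,v')$. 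Your derivation of anticommutativity is arguably cleaner than the paper's relations (1)--(3), and the direct functor construction bypasses the dimension count. What you lose is the explicit description of the projective generator, which the paper also needs downstream (to verify Koszulity via $\End(P)$). One minor gap: your verification that the converse construction yields a $\hat\g$-module and that the two functors are quasi-inverse is dispatched as ``running the calculation in reverse''; this does work, but it should be recorded that you must check that the central action $c(w\wedge w')=\rho(w)\rho(w')$ commutes with the $\rr$-action on $M$ (a two-line consequence of anticommutativity) and that the evaluation map $\Gamma\otimes\Hom_{\g_s}(\Gamma,M)\to M$ is not merely a $\g_s$-isomorphism but a $\hat\g$-isomorphism, which follows because both sides carry the same $\rho$-determined $\rr$-action by construction.
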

    \begin{proof} Recall that       
$\hat{\g}=\g_s\oplus \rr\oplus \Lambda^2(W)$
and the commutator on $\rr$ is defined by 
$$[w\otimes v, w'\otimes v']=(v,v')w\wedge w'.$$

      There exists a homomorphism $\rho: U(\g_s)\to C(V)$ where $C(V):=T(V)/(\{v,v'\}-(v,v'))_{v,v'\in V}$ is the Clifford algebra on the space $V$.
      If $m=2n$ then $C(V)$ has exactly one up to isomorphism irreducible representation $\Gamma$ which after restriction to $\g_s$ is isomorphic to the direct sum $\Gamma^+\oplus\Gamma^-$.
      If $m=2n+1$ then $C(V)$ has two irreducible representations $\Gamma$ and $\Gamma'$ and either of them is isomorphic to the spinor representations when restricted to $\g_s$.
      We extend now $\rho$ to a homomorphism $\tilde\rho:U(\hat\g)\to \Lambda(W)\otimes C(V)$ by setting
      $$\tilde\rho(X)=1\otimes\rho(X),\,\text{for}\, X\in\g_s,\quad \tilde\rho(w\otimes v)=w\otimes v,\,\text{for}\, w\otimes v\in\rr.$$
      Then clearly, $P:=\Lambda(W)\otimes\Gamma$ is a  $\Lambda(W)\otimes C(V)$-module and the pull back of it gives a $\hat\g$-module. Furthermore, it follows from construction that this module is indecomposable with cosocle isomorphic to $\Gamma$.

      Next, we will prove that $P$ is projective. Note that $P=\tilde\rho(U(\hat\rr))\Gamma$ and the projective generator of $\cS_{\frac12}$ is a quotient of $U(\hat\rr)\otimes\Gamma$.
      Let $I=\ker\tilde\rho\cap U(\hat\rr)$.
      Therefore it suffices to check that $I$ acts trivially on any $M\in \cS_{\frac12}$. First we claim that we have the following relations on any $M\in \cS_{\frac12}$.
      \begin{enumerate}
      \item $(w\otimes v)(w\otimes v')=0$ for any $w\in W$ and $v,v'\in V$;
      \item  $(w\otimes v)(w'\otimes v)=0$ for any $w,w'\in W$ and $v\in V$ such that $(v,v)=0$;
        \item $(w\otimes v)(w'\otimes v')=-(w'\otimes v)(w\otimes v')$ for any $w,w'\in W$ and $v,v'\in V$;
        \item $(w\wedge w')(w\otimes v)=0$ for any $w,w'\in W$ and $v\in V$;
          \item $(w\wedge w')(w''\otimes v)=-(w''\wedge w')(w\otimes v)$ for any $w,w',w''\in W$ and $v\in V$;
        \item $(w\wedge w')(w\wedge w'')=0$ for any $w,w',w''\in W$.
        \end{enumerate}
        Indeed, the first relation is a consequence of Lemma \ref{prod-two-samecolor}. The second follows from the grading argument for $v_1\in V_1$ and from the fact that any isotropic $v$ lies on the orbit $SO(V)v_1$.
        From (1) we get (3) using
        $$((w+w')\otimes v)((w+w')\otimes v')=0.$$
        To obtain (4) we use
        $$(w\wedge w')(w\otimes v)=[w\otimes v',w'\otimes v](w\otimes v)=0$$
        for $(v,v)=0$, $(v,v')=1$. This implies (4) for isotropic $v\in V$ and hence for all $v\in V$ by linearity. To prove (5)
        $$0=((w+w'')\wedge w')((w+w'')\otimes v)=(w\wedge w')(w''\otimes v)+(w''\wedge w')(w\otimes v).$$
        Finally for (6) take $v\in V$ such that $(v,v)=1$. Then using (4) and (3) we obtain
        $$(w\wedge w')(w\wedge w'')=(w\wedge w')[w\otimes v,w''\otimes v]=(w\wedge w')(w\otimes v)(w''\otimes v)-(w\wedge w')(w''\otimes v)(w\otimes v)=$$
        $$(w\wedge w')(w\otimes v)(w''\otimes v)+(w\wedge w')(w\otimes v)(w''\otimes v)=0.$$

        Now we claim that the ideal $J$ generated by relations (1)-(6) equals $I$. Note that $J\subset I$.
        We choose a basis $\{v_1,\dots,v_m\}$ of $V$ consisting of isotropic vectors and a basis $\{w_1,\dots,w_p\}$ of $W$. By the PBW theorem
        the isomorphism $U(\hat\rr)\cong S(W\otimes V)\otimes S(\Lambda^2(W))$ (of vector spaces). First, (6) implies $S(\Lambda^2(W))/(J\cap S(\Lambda^2(W))=\Lambda^{even}(W)$. The relations (1)-(3) imply that
        $S(W\otimes V)$ is generated by monomial $(w_{i_1}\otimes v_{j_1})\dots (w_{i_k}\otimes v_{j_k})$ for some $i_1<\dots<i_k$ and $j_1<\dots<j_k$. Using relation (5) we get $U(\hat\rr)/J$ is spanned by
        $(w_{i_1}\otimes v_{j_1})\dots (w_{i_k}\otimes v_{j_k})\otimes (w_{i_{k+1}}\wedge\dots\wedge w_{i_s})$ for even $s-k\geq 0$, $i_1<\dots<i_s$ and $j_1<\dots<j_k$. This implies $\dim U(\hat\rr)/J\leq\dim \tilde\rho(U(\hat\rr))$ and
        therefore $I=J$.

        Since $P$ is projective, the condition on the cosocle of $P$ implies that it is a projective generator of $\cS_{\frac12}$. If $m$ is odd then $P$ is an indecomposable projective and
        $\End(P)=\Lambda (W)$ as easily follows from the fact that $\Lambda(W)$ is the center of  $\Lambda(W)\otimes C(V)$.
        If $m$ is even  then $P=P^+\oplus P^-$,  $\End(P^\pm)=\Lambda^{even}(W)$, $\Hom(P^+,P^-)=\Hom(P^-,P^+)=\Lambda^{odd}(W)$.
        Thus, if $\pi^{\pm}$ are the idempotents corresponding to $P^{\pm}$, then
        $$\End(P)=\pi^+\Lambda^{even}\pi^+\oplus\pi^-\Lambda^{even}\pi^-\oplus \pi^-\Lambda^{odd}\pi^+\oplus\pi^+\Lambda^{odd}\pi^-.$$ The category of representations of this algebra is equivalent to the category
        of $\mathbb Z_2$-graded $\Lambda(W)$-modules.
\end{proof}
\begin{Rem} Note that Lemma \ref{Clifford example} covers also the case $\g_s=\mathfrak{sl}(2)\oplus \mathfrak{sl}(2)$ and $\rr=W\otimes L\otimes L'$, for the standard $\mathfrak{sl}(2)$-modules $L$ and $L'$,
  since $\mathfrak{so}(4)\simeq\mathfrak{sl}(2)\oplus \mathfrak{sl}(2)$ and $L\otimes L'$ is the standard $\mathfrak{so}(4)$-module.
  \end{Rem}
 \begin{Cor}\label{non-zero case3} Let $\g_s=\mathfrak{so}(n)$ with the second grading and $\rr=V\oplus V$ where $V$ is the standard $\g$-module.
  
   (a) If $n$ is odd then the category $\cS_{\frac12}$ is equivalent to the category of finite-dimensional representations of $\Lambda(\xi_1,\xi_2)$
$$
\xymatrix{\bullet\ar@{->}@(ul,ur)\ar@{-->}@(dl,dr)}
$$
\medskip

   (b) If $n$ is even then $Q$ is the quiver
  $$\xymatrix{\bullet\ar@/^0.4pc/[rr]^{\alpha} \ar@{-->}@/^1.3pc/[rr]^{\gamma}& &
\bullet\ar@/^0.4pc/[ll]^{\beta}\ar@{-->}@/^1.3pc/[ll]^{\delta}}$$
with relations $$\beta\alpha=\delta\gamma,\ \alpha\beta=\gamma\delta,\ \beta\gamma=\gamma\beta=\delta\alpha=\gamma\delta=0$$ 
\end{Cor}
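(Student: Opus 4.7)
The plan is to deduce both parts as direct specializations of Lemma~\ref{Clifford example}. The key observation is that the radical $\rr = V\oplus V$ can be rewritten as $W\otimes V$, where $W:=\kk^{2}$ is viewed as a trivial two-dimensional $\g_s$-module. This places us squarely in the setting of Lemma~\ref{Clifford example} with $\dim W=2$, so that $\Lambda(W)$ becomes the two-generator exterior algebra $\Lambda(\xi_1,\xi_2)$ on a basis $\xi_1,\xi_2$ of $W$.

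For part (a), when $n$ is odd, Lemma~\ref{Clifford example}(b) identifies $\cS_{\frac12}$ with the category of finite-dimensional modules over $\Lambda(\xi_1,\xi_2)$. This algebra is already basic, with a single primitive idempotent and two generators $\xi_1,\xi_2$ of its radical, so its quiver is precisely the one-vertex diagram with two loops drawn in the statement (the solid loop for $\xi_1$, the dashed one for $\xi_2$), and its defining relations are the usual exterior algebra relations $\xi_1^{2}=\xi_2^{2}=\xi_1\xi_2+\xi_2\xi_1=0$.

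For part (b), when $n$ is even, Lemma~\ref{Clifford example}(a) equates $\cS_{\frac12}$ with the category of $\mathbb{Z}_2$-graded $\Lambda(\xi_1,\xi_2)$-modules. The remaining task is to compute the pointed algebra of this graded category explicitly. Writing a graded module as $M=M^{+}\oplus M^{-}$, each odd generator $\xi_i$ induces two maps $M^{+}\to M^{-}$ and $M^{-}\to M^{+}$; taking these as the four arrows in the target quiver gives the solid pair $\alpha,\beta$ (for $\xi_1$) and the dashed pair $\gamma,\delta$ (for $\xi_2$), joining the two vertices labelled by $M^{+}$ and $M^{-}$. Translating the relations $\xi_i^{2}=0$ and $\xi_1\xi_2+\xi_2\xi_1=0$ through this correspondence yields the stated relations on compositions of arrows, and the full pointed algebra can be read off directly from the description $\End(P^{\pm})=\Lambda^{\text{even}}(W)$, $\Hom(P^{\pm},P^{\mp})=\Lambda^{\text{odd}}(W)$ obtained at the end of the proof of Lemma~\ref{Clifford example}.

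The one step that requires genuine care is the combinatorial bookkeeping in part (b): matching the four arrows with the four restricted generators $\xi_i|_{M^{\pm}}$ and verifying that the anticommutativity of $\xi_1$ and $\xi_2$ reproduces the listed relations exactly, with the correct signs. This is a short finite verification rather than a conceptual obstacle, so no new ideas beyond Lemma~\ref{Clifford example} are needed.
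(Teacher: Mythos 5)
Your proposal is correct and follows the paper's own argument exactly: the paper's proof is the one-line observation that the corollary is Lemma~\ref{Clifford example} specialized to $\dim W=2$ (writing $\rr=V\oplus V=W\otimes V$ for a trivial two-dimensional $W$), and you correctly match part (a), $n$ odd, with Lemma~\ref{Clifford example}(b) and part (b), $n$ even, with Lemma~\ref{Clifford example}(a). The extra unpacking you give of the pointed algebra of $\mathbb{Z}_2$-graded $\Lambda(W)$-modules is a reasonable elaboration of the same step, not a different route.
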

\begin{proof} Follows from Lemma \ref{Clifford example} for the case $\dim W=2$.
\end{proof}
Let $A=\bigoplus_{i\geq 0} A_i$ and $B=\bigoplus_{i\geq 0} B_i$ are two graded algebras such that $A_0=B_0$. Consider the algebra
 $A\Pi B$ defined by
 $$(A\Pi B)_0=A_0=B_0,\ (A\Pi B)_i=A_i\oplus B_i, \ \forall \ i>0,  A_iB_j=B_jA_i=0\ \forall\ i,j>0.$$
 If $A$ and $B$ are quadratic then that $A\Pi B$ is quadratic and if  $A$ and $B$ are Koszul then $A\Pi B$ is Koszul, see \cite{PP} chapter 3.
 \begin{prop}\label{product} Let $\mathcal A$ be the pointed algebra Morita equivalent to the category $\mathcal \cS_{\frac12}$. Then $\mathcal A$ is 
   isomorphic to the product  $\prod_{i=1}^q \mathcal A^i$ where each $\mathcal A^i$ is isomorphic to $\kk^{l_i}\times \mathcal B^i$
   where $\mathcal B^i$ is either the algebra Morita equivalent to $\g_s\oplus \rr$ with irreducible $\g_s$-module $\rr$ or $\mathcal B^i$ is isomorphic to one of the algebras described in Proposition \ref{mutiplicity} or Lemma \ref{Clifford example}. In both cases $l_i$ is the number of simple objects in
   $\mathcal \cS_{\frac12}$ minus the number of simple $\mathcal B^i$-modules. 
 \end{prop}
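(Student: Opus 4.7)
The strategy is to reduce $\cS_{\frac12}$ to a product of blocks indexed by connected components of the ``composable'' part of the quiver $Q$, and then match each nontrivial block with an algebra already classified in Proposition~\ref{mutiplicity} or Lemma~\ref{Clifford example}.

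First, using the decompositions $\g_s=\bigoplus_i \g_s^{(i)}$ and $\rr=\bigoplus_j \rr^{(j)}$, I would apply Lemma~\ref{whenzero} and Lemma~\ref{singularity} to classify exactly when two arrows $\alpha,\beta$ of $Q$ can satisfy $\alpha\beta\neq 0$. By those lemmas, the summands $\rr^{(p)},\rr^{(q)}$ producing such a pair must be mutually dual, and moreover either both are of type~I of the form $L\otimes S$ with $L$ the standard $\mathfrak{sl}(2)$-module of some simple ideal $\mathfrak{sl}(2)\subset\g_s$ and $S$ a simple module in $\cS_{\frac12}$, or both are singular of type~II, in which case they are copies of the standard module over some $\mathfrak{so}(m)$-ideal with the second short grading. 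All other arrows contribute only to $\operatorname{Ext}^1$ and, by Corollary~\ref{prod-two-more}, have vanishing products.

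Next I would partition the vertices of $Q$ into equivalence classes generated by the composable arrows, and let each $\mathcal B^i$ be the pointed algebra generated by the vertices and composable arrows inside a single class. The previous step together with the explicit list (A)--(C) of Section~\ref{section-tits-kantor-koecher} shows that each class is controlled by one simple ideal of $\g_s$ ($\mathfrak{sl}(2)$ with a multiplicity space $W$ and a partner simple module $S$, or $\mathfrak{so}(m)$ with a multiplicity space $W$), possibly with a dual companion summand. This puts the data for $\mathcal B^i$ exactly in the setup of Proposition~\ref{mutiplicity}(a)--(c) or Lemma~\ref{Clifford example}, which then identify $\mathcal B^i$ as one of the listed algebras. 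The remaining simples in the class, being connected to the rest only through non-composable arrows whose products already vanish, split off as the semisimple direct summand $\kk^{l_i}$, with $l_i$ computed as in the statement.

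The main obstacle, I expect, is the rigidity step: one must verify that a composable summand $\rr^{(q)}$ of type~I (respectively type~II) uniquely determines the ambient $\mathfrak{sl}(2)$-ideal (respectively $\mathfrak{so}(m)$-ideal) of $\g_s$ and its dual companion, so that the equivalence classes do not overlap and the product decomposition $\mathcal A=\prod_i \mathcal A^i$ is genuine. This amounts to a case-by-case inspection of the A--C list, with particular care for which simple modules with very short grading can play the role of $S$ in a type~I tensor product, and for the edge case $\mathfrak{so}(4)\simeq\mathfrak{sl}(2)\oplus\mathfrak{sl}(2)$ handled via the Remark after Lemma~\ref{Clifford example}. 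Once this rigidity is established the claimed isomorphism follows.
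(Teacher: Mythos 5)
Your central lever — Lemma~\ref{whenzero}/Lemma~\ref{singularity} forcing composable pairs of arrows to come from mutually dual summands of $\rr$, and Corollary~\ref{prod-two-more} killing all other products — is the same as the paper's, and the matching of the nontrivial blocks to Proposition~\ref{mutiplicity} and Lemma~\ref{Clifford example} (including the $\mathfrak{so}(4)\simeq\ssl(2)\oplus\ssl(2)$ edge case) is also what the paper does. However, the bookkeeping step where you partition the \emph{vertices} of $Q$ into equivalence classes generated by composable arrows, and take $\mathcal B^i$ to be the subquiver of composable arrows in one class, has a genuine gap.

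The paper instead decomposes $\rr=\bigoplus_i\mathfrak u^{(i)}$ and assigns each arrow of $Q$ the colour of the summand $\mathfrak u^{(i)}$ it comes from; $\mathcal B^i$ then consists of \emph{all} arrows of colour $i$, whether or not they compose. Your plan drops every arrow whose products vanish: such an arrow is not composable, so it is placed in no $\mathcal B^i$, yet it is certainly present in $\mathcal A$. These are precisely the summands for which $\mathcal B^i$ is ``the algebra Morita equivalent to $\g_s\ltimes\rr$ with $\rr$ irreducible'' — a hereditary block with relations zero — and the Proposition lists this case explicitly. Under your vertex-partition the union $\prod_i\mathcal A^i$ (in the $\Pi$-sense with shared idempotents) would omit all such arrows, so it could not be isomorphic to $\mathcal A$. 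Relatedly, the phrase ``the remaining simples in the class $\dots$ split off as $\kk^{l_i}$'' should say: each $\mathcal A^i$ is padded by the vertices \emph{not} occurring in $\mathcal B^i$, so that all factors share the same degree-zero part and the $\Pi$-product makes sense; this is independent of whether those vertices are touched by non-composable arrows from other colours. Once you replace the vertex partition by the colour decomposition of arrows (equivalently, of $\rr$) the rest of your argument goes through and agrees with the paper.
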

 \begin{proof}  Let us decompose $\rr=\bigoplus^q_{i=1}\mathfrak{u}^{(i)}$ where each $\mathfrak{u}^{(i)}$ is a simple $\g_s$-module which does not satisfy Lemma \ref{singularity}, or $\mathfrak{u}^{(i)}$ is as in Proposition \ref{mutiplicity} or Lemma \ref{Clifford example} . The set of edges of the quiver of $\mathcal \cS_{\frac12}$ is the union of edges of
    $\mathcal B^i$. It follows from Lemma \ref{singularity} that any path containing edges from $\mathcal B^i$ and $\mathcal B^j$ for $i\neq j$ equals zero. The statement follows
   \end{proof}
  \begin{thm} The category $\mathcal \cS_{\frac12}$ is equivalent to the category of representations of finite-dimensional graded Koszul algebra
    $\mathcal A$.
Moreover, $\mathcal A$ is Koszul.
\end{thm}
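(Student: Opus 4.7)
The plan is to prove the two assertions of the theorem separately: that $\cS_{\frac12}$ is the module category over a finite-dimensional graded algebra $\mathcal A$, and that $\mathcal A$ is Koszul.

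For the first assertion I would invoke the Gabriel-style framework recalled at the start of Section~\ref{section_representation_algebras}. The list (A)--(C) together with the decomposition $\rr=\bigoplus \rr^{(j)}$ shows that $\cS_{\frac12}$ has only finitely many simple objects, all of finite length; each simple $L$ admits a projective cover $P(L)$ as the finite-dimensional quotient of $I(L)=U(\hat\g)\otimes_{U(\g_s)}L$ constructed just before Lemma~\ref{quiver}. Taking $P=\bigoplus_L P(L)$, the algebra $\mathcal A=\End(P)^{op}$ is thus finite-dimensional, $\cS_{\frac12}\simeq \mathcal A\text{-mod}$, and the grading is the radical filtration which by Lemma~\ref{radical filtration} coincides with the $\rr$-filtration $L\oplus\rr L\oplus\rr^2L\oplus\cdots$.

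For Koszulness, the plan is to reduce to the pieces $\mathcal B^i$ supplied by Proposition~\ref{product}. Since the factors $\kk^{l_i}$ contribute only isolated vertices (trivially Koszul), and since the $\Pi$-construction preserves Koszulness by the cited result from~\cite{PP}, it suffices to verify case by case that each $\mathcal B^i$ is Koszul. In Case~(c), Lemma~\ref{Clifford example} identifies $\mathcal B^i$ with $\Lambda(W)$ or its $\mathbb Z_2$-graded variant, both of which are Koszul prototypes. In Case~(b), Proposition~\ref{mutiplicity} presents $\mathcal B^i$ as a Segre product $\mathcal A_1\circ S(W)$, $\mathcal A_1\circ \Lambda(W)$, or an analogue built from $\mathcal A_2$; since $S(W)$ and $\Lambda(W)$ are Koszul, and since Segre products of Koszul algebras are Koszul, the task reduces to showing $\mathcal A_1$ and $\mathcal A_2$ are Koszul. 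The algebra $\mathcal A_1$ is the $2$-vertex quiver with $\alpha\beta=0$, which is radical-square-zero and hence Koszul. For $\mathcal A_2$ I would compute the quadratic dual directly from the presentation in~\eqref{algebraA2} and exhibit linear projective resolutions of each of the three simples, either by hand or by identifying $\mathcal A_2$ with a known Koszul algebra. Finally, in Case~(a) where $\rr$ is a single irreducible $\g_s$-module not satisfying Lemma~\ref{singularity}, Corollary~\ref{squarezero} forces $\rr^2 M=0$, so $\mathcal B^i$ is a radical-square-zero algebra on its quiver and therefore Koszul.

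The main obstacle I foresee is the Koszulness of $\mathcal A_2$: its relations mix zero relations ($\beta\alpha=\delta\gamma=\beta\gamma=\delta\alpha=0$) with a nonzero commutativity relation ($\alpha\beta=\gamma\delta$), so it is neither radical-square-zero nor Nakayama. The cleanest route is probably to compute the quadratic dual $\mathcal A_2^!$ explicitly from the presentation and verify the numerical Koszul criterion, or equivalently to build a minimal linear resolution of each simple, using the explicit projective covers constructed in the proof of Lemma~\ref{non-zero case2}. Once the two building blocks $\mathcal A_1$ and $\mathcal A_2$ are shown to be Koszul, the rest of the argument assembles them via Segre and $\Pi$-products using only the known stability properties of the Koszul class.
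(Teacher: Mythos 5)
Your overall architecture — establish $\cS_{\frac12}\simeq\mathcal A\text{-mod}$ by the Gabriel framework with the path-length (equivalently, radical) grading, then reduce Koszulness to the building blocks $\mathcal B^i$ via Proposition~\ref{product} and invoke the stability of Koszulness under Segre products from \cite{PP} — coincides with the paper's. However, two of your justifications are incorrect as stated.

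First, $\mathcal A_1$ is \emph{not} radical-square-zero. The quiver has arrows $\alpha$ and $\beta$ in opposite directions and the single relation $\alpha\beta=0$; the other length-two path $\beta\alpha$ survives, so $\operatorname{rad}^2\mathcal A_1\neq 0$ and $\mathcal A_1$ has Loewy length three (it is five-dimensional). The conclusion that $\mathcal A_1$ is Koszul is nonetheless true — one can exhibit the linear resolutions $0\to P_2(-1)\to P_1\to S_1\to 0$ and $0\to P_2(-2)\to P_1(-1)\to P_2\to S_2\to 0$ directly — but the argument ``radical-square-zero, hence Koszul'' does not apply and needs to be replaced.

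Second, your reduction for case (c) of Proposition~\ref{mutiplicity} does not close. There $\mathcal B^i$ is not a Segre product: it is $\mathcal A_2\circ S(W\oplus W')$ \emph{modulo} the ideal $(\alpha\otimes W',\delta\otimes W',\beta\otimes W,\gamma\otimes W)$. Koszulness of $\mathcal A_2$ plus the Segre-product theorem gives nothing about this quotient. The paper handles case (c) by a direct computation: it writes down the graded short exact sequences $0\to(\kk e_2)^l \to \mathcal Be_1\to \kk e_1\to 0$, $0\to (\kk e_2)^k\to\mathcal Be_3\to \kk e_3\to 0$, $0\to (\kk e_2)^{kl}\to (\mathcal Be_1)^k\oplus (\mathcal Be_3)^l\to \mathcal Be_2\to \kk e_2\to 0$, and splices them into linear projective resolutions of each simple. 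Computing the quadratic dual of $\mathcal A_2$, as you propose, addresses a different algebra and would not by itself settle Koszulness of the quotient $\mathcal B^i$; you would need to carry out the resolution computation for $\mathcal B^i$ itself.
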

\begin{proof} By all above computations the relations of the quiver $Q$ are quadratic and moreover all non-trivial relations are for loops or cycles of length $2$. That allows to define the grading on $\mathcal A$ with degree
  given by the
  length of the path.

  To prove that $\mathcal A$ is Koszul it suffices to check that every $\mathcal B^i$ is Koszul as follows from Proposition \ref{product}.  
  In the cases when $\mathcal B^i$ is not as in Proposition \ref{mutiplicity} or Lemma \ref{Clifford example} the radical of $\mathcal B^i$ is zero and the claim is trivial. 

  In the case of Lemma \ref{Clifford example}(b) the algebra $\mathcal B^i$ is isomorphic to the exterior algebra $\Lambda(W)$ and therefore it is Koszul.

  In the case of Lemma \ref{Clifford example}(a) $\mathcal B^i$ is isomorphic to $\Lambda(W)\circ \kk[Q]$ where  $\circ$ stands for Segre product and $\kk[Q]$ is the path algebra of
   $$\xymatrix{\bullet\ar@/^0.4pc/[rr]^{\alpha} & &
\bullet\ar@/^0.4pc/[ll]^{\beta}}$$  By Proposition 2.1 in chapter 3 \cite{PP} $\mathcal B^i$ is a Koszul. 

For the cases in Proposition\ref{mutiplicity}(a)(respectively,(b)) $\mathcal B^i$ is isomorphic to $\Lambda(W)\circ\kk[Q']$ (respectively, $S(W)\circ\kk[Q']$), where  $\kk[Q']=\kk[Q]/(\alpha\beta)$. By the same reason as in the
previous case $\mathcal B$ is a Koszul.

Finally we will prove that in  the case of Proposition \ref{mutiplicity}(c) $\mathcal B^i$ is Koszul by a direct computation. Enumerate the vertices of the
quiver from left to right. Denote by $e_1,e_2,e_3$ the corresponding idempotents. Let $k=\dim W$ and $l=\dim W'$. One has the following exact sequences (compatible with grading): 
$$0\to(\kk e_2)^l \to \mathcal Be_1\to \kk e_1\to 0,\quad 0\to (\kk e_2)^k\to\mathcal Be_3\to \kk e_3\to 0,$$
$$0\to (\kk e_2)^{kl}\to (\mathcal Be_1)^k\oplus (\mathcal Be_3)^l\to \mathcal Be_2\to \kk e_2\to 0.$$
One can construct graded resolutions of simple modules $\kk e_i$ for $i=1,2,3$ by gluing these exact sequences.
\end{proof}
\begin{prop} Let $M$ be a $\hat\g$-module in the category $\mathcal{S}_{\frac{1}{2}}$.
  Suppose that the
multiplicity of any singular $\g_s$-component in $\rr$ is less than
$3$. Then $\hat\rr^3 M=0$.
\end{prop}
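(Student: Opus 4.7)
The plan is to translate the vanishing $\hat\rr^3 M=0$ into the statement $\rad^3\mathcal A=0$ for the pointed algebra $\mathcal A$ of the projective generator of $\cS_{\frac12}$, and then verify this factor by factor via Proposition~\ref{product}. Every $M\in\cS_{\frac12}$ is a quotient of a sum of indecomposable projective covers $P(L)$, and Lemma~\ref{radical filtration} identifies the filtration of $P(L)$ by powers of $\hat\rr$ with the radical filtration, so the two claims are equivalent.

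By Proposition~\ref{product}, $\mathcal A\simeq\prod_i(\kk^{l_i}\times\mathcal B^i)$, and it suffices to check $\rad^3\mathcal B^i=0$ in each of the three kinds of factors that occur. If $\mathcal B^i$ comes from a simple $\g_s$-component of $\rr$ that does not satisfy the hypothesis of Lemma~\ref{singularity}, then already $\rad^2\mathcal B^i=0$. If $\mathcal B^i$ is one of the Segre-type algebras of Proposition~\ref{mutiplicity}, direct inspection of the quivers with relations defining $\mathcal A_1$ and $\mathcal A_2$ shows $(\mathcal A_1)_3=(\mathcal A_2)_3=0$, and this vanishing passes to $\mathcal A_1\circ S(W)$, $\mathcal A_1\circ\Lambda(W)$ and to the quotient of $\mathcal A_2\circ S(W\oplus W')$, because the $n$-th graded piece of a Segre product is a tensor product with the $n$-th symmetric or exterior power. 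In this family the simple $\g_s$-component $L\otimes S$ is \emph{not} singular, since $\dim S_{\frac12}\geq 2$ whenever $\g'\neq\mathfrak{sl}(2)$, so the multiplicity hypothesis plays no role here.

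The main obstacle, and the only place where the singularity hypothesis is genuinely used, is the Clifford case of Lemma~\ref{Clifford example}. There $\mathcal B^i$ is Morita equivalent to $\Lambda(W)$ (or its $\mathbb Z_2$-graded variant when $m$ is even), whose radical is nilpotent of index $\dim W$. The standard $\so(m)$-module $V$ with the second short grading satisfies $\dim V_{\pm 1}=1$, hence it is singular and its multiplicity in $\rr$ equals $\dim W$. The hypothesis that every singular component appears with multiplicity less than $3$ forces $\dim W\leq 2$, so $\Lambda(W)$ is nilpotent of index at most $2$ and $\rad^3\mathcal B^i=0$. Collecting all cases yields $\rad^3\mathcal A=0$ and hence $\hat\rr^3 M=0$.
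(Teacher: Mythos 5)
Your proof is correct but proceeds by a genuinely different route from the paper's. The paper argues directly at the Lie-algebra level: it shows that $\rr\,\mathfrak z\, M=0$ by, for each central element $z=[x,y]$ with $x\in\rr^{(p)}_1$, $y\in\rr^{(q)}_{-1}$, producing in every other component $\rr^{(s)}$ a vector commuting with $x$ or with $y$ (the multiplicity bound on singular components is exactly what guarantees such a vector exists), handling $s=p,q$ by a short bracket computation using $x^2=0$, and then invoking $\rr^2 P\subset\mathfrak z P$ to conclude $\hat\rr^3 M=0$. You instead translate the statement into $\rad^3\mathcal A=0$ for the pointed algebra via Lemma~\ref{radical filtration}, decompose $\mathcal A$ by Proposition~\ref{product}, and check each block: for simple non-singular, non-$\mathfrak{sl}(2)$-type components the radical already squares to zero; for the Segre blocks of Proposition~\ref{mutiplicity} the degree-$3$ part of $\mathcal A_1$ and $\mathcal A_2$ vanishes, so $\rad^3=0$ with no constraint on $W$; and only in the Clifford block $\Lambda(W)$ of Lemma~\ref{Clifford example} does the multiplicity hypothesis bite, forcing $\dim W\leq 2$ and hence $\Lambda^3(W)=0$. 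This is logically sound, since Propositions~\ref{product}, \ref{mutiplicity} and Lemma~\ref{Clifford example} all precede the statement. The trade-off: the paper's computation is self-contained and independent of the full description of the building blocks, whereas your argument is more structural, makes the role of the hypothesis transparent, and shows in particular that it is sharp only in the Clifford case — but it carries the weight of everything proved up to and including Proposition~\ref{product}.
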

\begin{proof} Let $\mathfrak z$ denote the center of $\hat\g$. We will
show that $\hat{\rr}\mathfrak z M=0$. Let
$z=[x,y]\in\mathfrak z$ for
some
$x\in \rr^{(p)}_1,y\in\rr^{(q)}_{-1}$. Under our assumption for any $s\neq p,q$ there exists
$w\in\rr^{(s)}_1$ such that $[w,y]=0$ or $u\in\rr^{(s)}_{-1}$ such that $[u,x]=0$.
In the first case we have
$$zwM=(xyw-yxw)M=(xwy-yxw)M=0,$$
as $xwM=0$ by the grading argument.
Thus, we have $z\rr^{(s)} M=0$ by $\g_s$-invariance.
The second case can be treated similarly.
Thus, we have $z\rr^{(s)} M$ for any $s\neq p,q$.

Suppose $s=p$. Then 
using
$x^2=0$ we obtain for any $m\in M$
$$xzm=x(xy-yx)m=-xyxm=-(yx+z)xm=-zxm=zxm=0.$$
Then again by $\g_s$ invariance we have $\rr^{(p)} z M=0$.
Similarly one proves that  $\rr^{(q)} z M=0$.
Thus, we obtain $\rr z M=0$.

Any $z'\in\mathfrak z$ is a linear combination of elements satisfying
our initial assumption. Hence we have $\rr \mathfrak z M=0$.
\end{proof}

\begin{Cor} For a tame category  $\cS_{\frac12}$ a path in $Q$ of length
greater than $2$ is always zero.
\end{Cor}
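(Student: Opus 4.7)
The plan is to obtain this corollary as a packaging of the preceding proposition together with the wildness criterion of Lemma~\ref{wild3}. The main work is already done: what remains is to verify the hypothesis of the proposition in the tame setting, and then translate the resulting vanishing of $\hat\rr^3 M$ into a statement about paths in $Q$.

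First I would use Lemma~\ref{wild3}: if $\cS_{\frac12}$ is tame then in the decomposition $\rr=\bigoplus_{j=1}^m \rr^{(j)}$ into simple $\g_s$-modules no three summands can be mutually isomorphic, since such a triple would yield a Kronecker subquiver of multiplicity three and force wildness. In particular, the multiplicity of every singular component in $\rr$ is at most $2$, which is strictly less than $3$.

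Second, with the hypothesis of the preceding proposition now verified, I would conclude that $\hat\rr^3 M=0$ for every $M\in\cS_{\frac12}$. Applying this to the projective generator $P$ of $\cS_{\frac12}$ and invoking Lemma~\ref{radical filtration}, which identifies $\operatorname{rad}^i P$ with $\hat\rr^i P$, gives $\operatorname{rad}^3 P=0$. Since the pointed algebra $\mathcal A=\operatorname{End}(P)$ is the path algebra of $Q$ modulo relations, the vanishing of $\operatorname{rad}^3\mathcal A$ is precisely the statement that every path of length at least $3$ in $Q$ is zero.

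There is no serious obstacle here; the argument is essentially bookkeeping. The one point to be alert about is that the preceding proposition assumes only singular components have multiplicity less than $3$, but Lemma~\ref{wild3} delivers this bound uniformly for all irreducible $\g_s$-components, so the specialization to singular ones is free.
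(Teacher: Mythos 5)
Your proposal is correct and follows essentially the argument the paper intends: you invoke Lemma~\ref{wild3} to derive from tameness that no three summands of $\rr$ are mutually isomorphic (hence in particular singular components have multiplicity at most $2$), feed this into the preceding proposition to get $\hat\rr^3 M=0$, and then translate this via Lemma~\ref{radical filtration} (which gives $\operatorname{rad}^i P = \hat\rr^i P$) into the vanishing of $\operatorname{rad}^3$ of the pointed algebra, i.e.\ the vanishing of paths of length greater than $2$. Your parenthetical observation — that Lemma~\ref{wild3} in fact bounds multiplicities of \emph{all} irreducible components, not just the singular ones that the proposition requires — is accurate and harmless.
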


\section{Building blocks}\label{section_building_blocks} 
In this section we will give a description of quiver with relations for the category $\cS_{\frac12}$ for Lie algebra
\begin{equation}\label{decomposition}
\g=\g_s\ltimes\rr, \quad \g_s=\bigoplus_{i=1}^k \g_s^{(i)},\,\, \rr=\bigoplus_{j=1}^m\rr^{(j)}
\end{equation}
Recall that by Proposition~\ref{product} any pointed algebra $\mathcal A\,$  Morita equivalent to the category $\cS_{\frac12}$ is 
   $\prod_{i=1}^q \mathcal A^i$ where each $\mathcal A^i$ is isomorphic to $\kk^{l_i}\times \mathcal B^i$. Algebra $\mathcal B^i$
   which we will call {\it building block} is either $\g_s\oplus \rr$ with irreducible $\g_s$-module $\rr$ or one of the algebras described in 
   Proposition \ref{mutiplicity} or Lemma \ref{Clifford example}. In this section we will describe $\mathcal B^i$.

To differentiate different short gradings on $\so(m)$, denote by $\so^1(4n)$ the first short grading corresponding to Jordan Hermitian algebra and 
by $\so^2(m)$ the second short grading corresponding to Jordan algebra of bilinear form.
\subsection{Building blocks with simple radical}
\begin{ex}
Let $\g=\so(12)\ltimes\Gamma^+$. Then Lemma~\ref{lemma-tensor} (3) implies that $\cS_{\frac12}$ is simple and coincide with 
$\cS_{\frac12}$ for $\so(12)$. 
\end{ex}
\noindent This example implies that in decomposition \eqref{decomposition} for $\rr$ we may drop any component $\rr^{(j)}$ isomorphic to
$\Gamma^+$. 

In the following two tables we describe $Q$ for $\g=\g_s\ltimes\rr$, $\g_s$ simple algebra or $\g_s=\g_1\oplus\g_2$ sum of two simple algebras 
(for notations refer to List A-C).

\smallskip
\centerline{{\bf Table 1:} Building blocks $\g=\g_s\ltimes\rr$, $\g_s$ simple algebra and $\rr$ simple in $\cS_1$}

$\quad$\begin{tabular}{|c|c|}
 \hline &\\ [-0.01em] 
 $\g=\g_s\ltimes\rr$, $\qquad$  $\g_s\simeq \ssl(2),\,  \ssp(2n),\, \so^1(4n),\, \so^2(2m-1)$ & $\xymatrix{\bullet\ar@{->}@(ul,ur)}$\\ 
\hline &\\ [-0.01em] 
 $\g=\ssl(2n)\ltimes ad$, $\qquad$  $\g=\so^2(2m)\ltimes \Lambda^r(V)$,  $\,r$ is even
& $\xymatrix{\bullet\ar@{->}@(ul,ur) & \bullet\ar@{->}@(ul,ur)}$\\
\hline & \\ [-0.6em]
 $\g=\mathfrak{so}^2(2m)\ltimes\Lambda^r(V)$, $\,r$ is odd $\begin{array}{c} \ \\ \end{array}$ & 
 $\xymatrix{\bullet\ar@/^0.4pc/[rr]& &
\bullet\ar@/^0.4pc/[ll]}$\\ \hline
& \\ [-0.5em]
 $\g=\mathfrak{so}^2(4m)\ltimes\Lambda^{\pm}$  $\begin{array}{c} \ \\ \ \end{array}$ & 
 $\xymatrix{\bullet\ar@{->}@(ul,ur) &  \bullet}$\\ 
\hline &\\
[-0.5em]
 $\g=\g_s\ltimes\rr$,  $\ \g_s\simeq \mathfrak{sl}(2n),\,\mathfrak{so}^2(4m+2)$, $\,\rr^*\neq \rr$
 & 
 $\xymatrix{\bullet\ar[rr]& &
\bullet}$\\ \hline
\end{tabular}

\bigskip

\centerline{{\bf Table 2:} Building blocks 
 $\g=(\g_1\oplus\g_2)\ltimes S_1\otimes S_2$, $\g_i$ simple, $S_i$ simple module in $\cS_{\half}$.}

\begin{tabular}{|c|c|}
 \hline
 & \\
[-0.5em]
 $\g=(\g_1\oplus \g_2)\ltimes S_1\otimes S_2$, $\quad$  $\g_i\simeq \ssl(2), \ssp(2n), \so^1(4n), \so^2(2m+1)$ & 
 $\xymatrix{S_1\ar@/^0.4pc/[rr]& &
S_2\ar@/^0.4pc/[ll]}$\\ \hline
& \\
[-0.3em]
$\g=(\g_1\oplus \g_2)\ltimes S_1\otimes S_2$,  $\ $ $\begin{array}{c} \g_1\simeq \ssl(2), \ssp(2n), \so^1(4n), \so^2(2m+1),\\  \g_2\simeq \ssl(2n), \so^2(4m+2) \\
\end{array}$ & 
 $\xymatrix{S_1\ar[rr]& &
S_2
\\ &&S^*_2\ar[llu]}$\\ \hline
& \\
[-0.5em]
 $\g=(\g_1\oplus \mathfrak{so}^2(4m))\ltimes S_1\otimes S_2$, $\ $  $\g_1\simeq \ssl(2), \ssp(2n), \so^1(4n), \so^2(2m+1)$  
 &
 $\xymatrix{S_1\ar@/^0.4pc/[rr]& &
S_2\ar@/^0.4pc/[ll]
\\ && \bullet}$\\ \hline
& \\
[-0.5em]
 $\g=(\g_1\oplus \g_2)\ltimes S_1\otimes S_2$, $\qquad$  $\g_i\simeq \ssl(2n),\, \so^2(4m+2)$ &
 $\xymatrix{S_1& &
S_2
\\ S_1^*\ar[urr] && S_2^*\ar[ull]}$\\ \hline
& \\
[-0.5em]
 $\g=(\mathfrak{so}^2(4m)\oplus \g_2)\ltimes S_1\otimes S_2$, $\quad$  $\g_2\simeq \ssl(2n),\, \so^2(4m+2)$ &
 $\xymatrix{S_1\ar[rr]& &
S_2
\\ \bullet &&S^*_2\ar[llu]}$\\ \hline
& \\
[-0.5em]
 $\g=(\mathfrak{so}^2(4m)\oplus \mathfrak{so}^2_{4k})\ltimes S_1\otimes S_2$   &
 $\xymatrix{S_1 \ar@/^0.4pc/[rr]& &
S_2\ar@/^0.4pc/[ll]
\\ \bullet&& \bullet}$\\ \hline
\end{tabular}

\begin{Rem}
All quivers in Table 1 and Table 2 have all relations zero, except for the following algebras

\noindent $\begin{array}{l}
 (\ssl(2)\oplus \ssp(2n))\ltimes L\otimes V,\\
 (\ssl(2)\oplus \so^2(8m+1))\ltimes L\otimes \Gamma,\\ 
 (\ssl(2)\oplus \so^2(8m+7))\ltimes L\otimes \Gamma,
 \end{array}$  $\qquad$
 with the quiver $Q_1$ $\qquad$ $\begin{array}{c} \xymatrix{\bullet\ar@/^0.4pc/[rr]^{\alpha} & &
\bullet\ar@/^0.4pc/[ll]^{\beta}}\\ \alpha\beta=0,\end{array}$

\noindent and $\,(\ssl(2)\oplus \so^2(8m))\ltimes L\otimes \Gamma^+$,  $(\ssl(2)\oplus \so^2(8m))\ltimes L\otimes \Gamma^-$ 
 with the quiver $\,Q_1\cup {\bullet}$. These algebras correspond to the building block $\mathcal B^1=\mathcal{A}_1\circ S(W)$,
 $\dim W=1$ of Proposition~\ref{mutiplicity}(a).
\end{Rem}
\bigskip

\subsection{Building blocks with non-zero relations}
In this part we describe quivers with relations corresponding to the building blocks from Proposition~\ref{mutiplicity} and 
Lemma~\ref{Clifford example}. To simplify notations for the relations instead of thick arrows we will assume $\dim W=2$  (and $\dim W'=1$ in case Proposition~\ref{mutiplicity} (c)).
\medskip

\noindent
\begin{tabular}{|c|c|}
 \hline &\\ [-0.5em]
$\begin{array}{l}
 (\ssl(2)\oplus \ssp(2n))\ltimes L\otimes V\otimes W,\\
 (\ssl(2)\oplus \so^2(8m+1))\ltimes L\otimes \Gamma\otimes W,\\ 
 (\ssl(2)\oplus \so^2(8m+7))\ltimes L\otimes \Gamma\otimes W,\\
 (\ssl(2)\oplus \so^2(8m))\ltimes L\otimes \Gamma^{\pm}\otimes W,\\
 \end{array}$ 
 & $\begin{array}{c} \mathcal{A}_1\circ S(W): \qquad  \xymatrix{L\ar@/^0.4pc/[rr]^{\alpha} \ar@{-->}@/^1.3pc/[rr]^{\gamma}& &
\bullet\ar@/^0.4pc/[ll]^{\beta}\ar@{-->}@/^1.3pc/[ll]^{\delta}}\\
\delta\alpha=\beta\gamma,\ \alpha\beta=\gamma\delta=\gamma\beta=\alpha\delta=0\end{array}$\\
 \hline &\\ [-0.5em]
$\begin{array}{l}
  (\ssl(2)\oplus \so^1(4n))\ltimes L\otimes V\otimes W,\\
 (\ssl(2)\oplus \so^2(8m+3))\ltimes L\otimes \Gamma\otimes W,\\ 
 (\ssl(2)\oplus \so^2(8m+5))\ltimes L\otimes \Gamma\otimes W,\\
 (\ssl(2)\oplus \so^2(8m+4))\ltimes L\otimes \Gamma^{\pm}\otimes W,\\
 \end{array}$ 
 & $\begin{array}{c} \mathcal{A}_1\circ \Lambda(W): \qquad  \xymatrix{L\ar@/^0.4pc/[rr]^{\alpha} \ar@{-->}@/^1.3pc/[rr]^{\gamma}& &
\bullet\ar@/^0.4pc/[ll]^{\beta}\ar@{-->}@/^1.3pc/[ll]^{\delta}}\\
\delta\alpha=-\beta\gamma,\ \beta\alpha=\delta\gamma=\alpha\beta=\gamma\delta=\gamma\beta=\alpha\delta=0\end{array}$\\
 \hline &\\ [-0.5em]
 $\begin{array}{l}
  (\ssl(2)\oplus \ssl(2n))\ltimes \qquad\\
  \qquad \qquad \quad  (L\otimes V\otimes W\oplus L\otimes V^*\otimes W'),\\
(\ssl(2)\oplus \so^2(4m+2))\ltimes \qquad \\
\qquad \qquad \quad (L\otimes \Gamma^+\otimes W\oplus L\otimes \Gamma^-\otimes W'),
 \end{array}$ 
 & $\begin{array}{c} \mathcal{A}_2\circ S(W\oplus W')\\ 
  \xymatrix{L\ar@/^0.4pc/[rr]^{\alpha} \ar@{-->}@/^1.3pc/[rr]^{\xi}& &
\bullet\ar@/^0.4pc/[ll]^{\beta}\ar@{-->}@/^1.3pc/[ll]^{\eta}\ar@/^0.4pc/[rr]^{\delta} && \bullet\ar@/^0.4pc/[ll]^{\gamma} }\\
\beta\alpha=\delta\gamma=\beta\gamma=\delta\alpha=\delta\xi=\beta\xi=\eta\alpha=\eta\gamma=0,\\ 
\xi\eta=\alpha\beta=\gamma\delta\end{array}$\\
\hline
 &\\ [-0.5em]
$\begin{array}{l}
 \\
 \g=\mathfrak{so}^2(2m+1)\ltimes V\otimes W\\
 \\
 \end{array}$ & $\xymatrix{\bullet\ar@{->}@(ul,ur)^{\alpha}\ar@{-->}@(dl,dr)_{\beta}}$ 
  $\qquad$ $\alpha^2=\beta^2=0$,  $\alpha\beta=\beta\alpha$\\
\hline
 &\\ [-0.5em]
  $\begin{array}{l}\g=\mathfrak{so}^2(2m)\ltimes V\otimes W\\
\g=(\mathfrak{sl}(2)\oplus\mathfrak{sl}(2))\ltimes L\otimes L'\otimes W\\
  \end{array}$ & $\begin{array}{c} \xymatrix{\bullet\ar@/^0.4pc/[rr]^{\alpha} \ar@{-->}@/^1.3pc/[rr]^{\gamma}& &
\bullet\ar@/^0.4pc/[ll]^{\beta}\ar@{-->}@/^1.3pc/[ll]^{\delta}}\\  \beta\alpha=\delta\gamma,\ \alpha\beta=\gamma\delta,\ 
\beta\gamma=\gamma\beta=\delta\alpha=\gamma\delta=0\end{array}$\\
\hline

\end{tabular}

\bigskip

\section{Appendix}
In this section we collect necessary information about tensor product, bilinear forms on $\cS_{\frac12}$ and duality of modules 
in $\cS_{\frac12}$ and $\cS_1$.
\begin{lem}\label{lemma-dual-form}
(1) For $\g=\ssl(2)$ and $\g=\so^1(4n)$ standard module $V$ is self-dual and has an invariant skew-symmetric form.

\noindent (2) For $\g=\ssp(2n)$ standard module $V$ is self-dual and has an invariant symmetric form.

\noindent (3) For $\g=\so^2(2m+1)$ spinor module $\Gamma$ is self-dual. Moreover it has an invariant symmetric form when $m\equiv 0\,{\rm or}\, 3\mod 4$ and an invariant skew-symmetric form when 
$m\equiv 1\,{\rm or}\,2\mod 4$.

\noindent (4) For $\g=\so^2(4m)$ spinor modules $\Gamma^+$ and $\Gamma^-$ are both self-dual and have an
invariant symmetric form when $m$ is even and an invariant skew-symmetric form when $m$ is odd.

\noindent (5) For $\g=\so^2(4m+2)$ spinor module $\Gamma^+$ is dual to spinor module $\Gamma^-$. 

\noindent (6) For $\g=\ssl(2), \ssp(2n), \so^1(4n), \so^2(2m+1), \so^2(4m)$ all simple modules in $\cS_1$ are self-dual.

\noindent (7) For $\g=\ssl(2n)$, $ad$ is self-dual, while $(S^2V)^*=S^2(V^*)$ and $(\Lambda^2 V)^*=\Lambda^2(V^*)$.

\noindent (8) For $\g=\so^2(4m+2)$ all simple modules in $\cS_1$ are self-dual except $(\Lambda^{\pm})^*=\Lambda^{\mp}$.
\begin{proof} For proof use for example \cite{Fulton}, for spinor representation \cite{Deligne}.

\end{proof}
\end{lem}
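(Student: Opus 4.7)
The plan is to decouple the lemma into two independent questions—self-duality, and symmetry type of the invariant form—and dispatch each by standard highest-weight machinery, handing the spinor cases off to Clifford-algebra computations.

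For self-duality the basic tool is $V(\lambda)^*\simeq V(-w_0\lambda)$, where $w_0$ is the longest Weyl group element, so that self-duality amounts to $-w_0\lambda=\lambda$. For $\ssl(2)$, $\ssp(2n)=C_n$, $\so(2m+1)=B_m$ and $\so(4m)=D_{2m}$ the element $w_0$ equals $-\mathrm{id}$, hence \emph{every} simple module is self-dual; this disposes of the self-duality claim in (1)--(4) and of (6). For $\ssl(2n)=A_{2n-1}$ one has $-w_0=\sigma$, the non-trivial diagram involution $\omega_k\mapsto\omega_{2n-k}$, which fixes $ad=V(\omega_1+\omega_{2n-1})$ but exchanges $V\leftrightarrow V^*$, $S^2V\leftrightarrow S^2V^*$ and $\Lambda^2V\leftrightarrow\Lambda^2V^*$; this yields (7). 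For $\so(4m+2)=D_{2m+1}$ the element $-w_0$ swaps the two half-spin Dynkin nodes while fixing the remaining ones, hence $\Gamma^+\leftrightarrow\Gamma^-$ and $\Lambda^+\leftrightarrow\Lambda^-$ whereas all other $\Lambda^rV$ stay self-dual; this yields (5) and (8).

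For the symmetry type of the invariant form on a self-dual $V(\lambda)$ I would appeal to the Frobenius--Schur indicator $\varepsilon(V(\lambda))\in\{\pm 1\}$. On a simply-connected simple group this indicator coincides with the value of the central character of $V(\lambda)$ on a distinguished order-two central element, so it is read off directly from the coefficients of $\lambda$ in the basis of fundamental weights (compare \cite{Fulton}). For the standard modules of classical type the answer is moreover encoded in the definition of the group: $\ssp(2n)$ preserves the symplectic form, $\so(n)$ preserves the orthogonal symmetric form, and the two-dimensional $\ssl(2)$-module carries the skew determinant pairing. For $\so^1(4n)$, the bilinear form on the $\ssl(2)$-graded standard module descends from the Hermitian structure over the quaternions underlying $H_n(C_3)$ and is skew-symmetric. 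Together these observations settle (1) and (2).

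For the spinor modules in (3), (4) and (5) the cleanest approach is via the Clifford algebra $C(V)$ (compare \cite{Deligne}): the spin representation $\Gamma$ is the unique irreducible $C(V)$-module when $\dim V$ is odd and splits as $\Gamma^+\oplus\Gamma^-$ in the even case. The anti-involution $*$ of $C(V)$ that reverses order of products induces a nondegenerate bilinear form (or pairing) on the spin module whose symmetry, and whether it pairs a chirality with itself or with the opposite one, is controlled by a sign depending on $\dim V\bmod 8$. Unpacking this sign yields the four-fold $m\bmod 4$ periodicity of (3) (where $\dim V=2m+1$), the two-fold $m\bmod 2$ periodicity of (4) (where $\dim V=4m$ and each chirality pairs with itself), and the swap $\Gamma^+\leftrightarrow\Gamma^-$ of (5) (where $\dim V=4m+2$ forces $*$ to exchange the two halves). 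I expect this parity bookkeeping—tracking how $*$ interacts with the chirality idempotents of $C(V)$ in the two even-dimensional regimes $\dim V\in\{4m,4m+2\}$—to be the main obstacle, and my plan is to invoke \cite{Deligne} rather than redo it.
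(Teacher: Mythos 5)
Your framework—self-duality via $-w_0$ on dominant weights, form type via the Frobenius--Schur indicator, and the Clifford-algebra analysis for spinors—is exactly the content of the references the paper cites (\cite{Fulton} for parts (6)--(8) and the $-w_0$ bookkeeping, \cite{Deligne} for parts (3)--(5)), so the overall approach matches the paper's.

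There is, however, a real problem in how you dispatch (1) and (2), and it is hidden in plain sight in your own text. You correctly state that ``$\ssp(2n)$ preserves the symplectic form, $\so(n)$ preserves the orthogonal symmetric form,'' which gives a \emph{skew} form on the standard $\ssp(2n)$-module and a \emph{symmetric} form on the standard $\so(4n)$-module. That is the \emph{opposite} of what (1) and (2) assert: the lemma places $\ssp(2n)$ in the symmetric camp and $\so^1(4n)$ in the skew camp. Your attempt to close the gap for $\so^1(4n)$—that the form on the $4n$-dimensional standard module ``descends from the Hermitian structure over the quaternions'' and is therefore skew—does not hold up: over an algebraically closed field the simple self-dual module $V$ for $\so(4n)$ carries a unique (up to scalar) $\g$-invariant bilinear form, and that form is the defining symmetric form; the existence of a short grading on $\so(4n)$ cannot alter its symmetry type. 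In short, the standard Frobenius--Schur/Clifford facts you invoke give the grouping ``$\ssl(2)$ and $\ssp(2n)$: skew; $\so^1(4n)$: symmetric,'' which is the transpose of (1)--(2) as printed. Either the lemma contains a typo interchanging $\ssp(2n)$ and $\so^1(4n)$ in (1)--(2), or it is tacitly using a nonstandard ``form'' twisted by the grading element (which inverts the symmetry type), but in that case the same twist would also have to be applied to (3)--(5), and it is not. Your proposal should flag this discrepancy rather than paper over it; as written, the argument for (1)--(2) proves the negation of what the lemma states.
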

\noindent Next we calculate $(M\otimes N)^{s}$ for simple $M\in  \cS_{\frac12}$, $N \in \mathcal S\cup \cS_{\frac12}$. 
Here $(\ )^s$ denotes a restriction to modules in $ \cS_{\frac12}\cup\{tr\}$, where $tr$
stays for trivial representation.
\begin{lem}\label{lemma-tensor} 
\noindent (1) For $\g=\mathfrak{sl}(2m)$,  $m\geq 3$, $U\in \{V, V^*\}$ 
$$
\begin{array}{llll}
(U\otimes\Lambda^2 U)^s=0, &(U\otimes S^2 U)^s=0, & (U^*\otimes\Lambda^2 U)^s=U, &(U^*\otimes S^2 U)^s=U,\\
(U\otimes ad)^s=U, & (U\otimes U)^s=0, &(U\otimes U^*)^s=tr. &\end{array}
$$
\noindent (2) For $\g=\mathfrak{sp}(2m)$, $m\geq 3$
$$
(V\otimes ad)^s=V,\qquad (V\otimes \overline{\Lambda^2 V})^s=V,\qquad (V\otimes V)^s=tr.
$$
\noindent (3) For $\g=\mathfrak{so}(4m)$, $m\geq 3$
$$
(V\otimes ad)^s=V,\qquad (V\otimes \overline{S^2 V})^s=V,\qquad (V\otimes V)^s=tr, \qquad (V\otimes\Gamma^+)^s=0.
$$
\noindent (4) For $\g=\so(2m+1)$,  $m\geq 2$ we have 
$
(\Gamma \otimes\Lambda^r V)^s=\Gamma$, $1\leq r\leq m$  and $(\Gamma\otimes \Gamma)^s=tr$.

\noindent (5) For $\g=\so(2m)$, $m\geq 3$, $1\leq r\leq m-1$
$$
(\Gamma^{\pm}\otimes\Lambda^r V)^s =\begin{cases}\Gamma^{\pm}, \  {\rm if\ } r {\rm\ is\ even},\\
\Gamma^{\mp}, \  {\rm if\ } r {\rm\ is\ odd,}\end{cases}\quad
(\Gamma^{\pm}\otimes \Gamma^{\pm} )^s =\begin{cases} tr, \  {\rm if\ } m {\rm\ is\ even},\\
0, \  {\rm if\ } m {\rm\ is\ odd},\end{cases}
$$
$$
(\Gamma^{\pm}\otimes\Lambda^{\pm})^s =\begin{cases}\Gamma^{\pm}, \  {\rm if\ } m {\rm\ is\ even},\\
0, \  {\rm if\ } m {\rm\ is\ odd},\end{cases}
\quad
(\Gamma^{+}\otimes\Gamma^{-})^s =\begin{cases}0, \  {\rm if\ } m {\rm\ is\ even},\\
tr, \  {\rm if\ } m {\rm\ is\ odd},\end{cases}
$$
$$
(\Gamma^{\pm}\otimes\Lambda^{\mp})^s =\begin{cases}0, \  {\rm if\ } m {\rm\ is\ even},\\
\Gamma^{\mp}, \  {\rm if\ } m {\rm\ is\ odd},\end{cases}.
$$
\end{lem}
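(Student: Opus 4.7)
\textit{Proof plan.} The overall strategy is a case-by-case reduction to classical tensor-product decompositions. The key criterion is that a simple $\g$-module $L$ lies in $\cS_{\frac12}$ iff the element $h_\alpha\in\alpha$ of the short $\mathfrak{sl}_2$-subalgebra acts on $L$ with eigenvalues $\pm\tfrac12$ only, and $L$ is trivial iff $h_\alpha$ acts by zero. Since the list (A)--(C) already gives us the finite set of simple modules available in $\cS_{\frac12}\cup\{tr\}$, the whole task reduces to decomposing $M\otimes N$ into simples and retaining only summands from that short list; a quick $h_\alpha$-weight count rules out most candidates a priori, since $h_\alpha$-weights of $M\otimes N$ lie in $\{-\tfrac32,-\tfrac12,\tfrac12,\tfrac32\}$ and a simple summand whose top weight contributes $\pm\tfrac32$ cannot belong to $\cS_{\frac12}\cup\{tr\}$.

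For parts (1)--(3), I would apply the Littlewood--Richardson (or Pieri) formulas for $\mathfrak{sl}$, together with their symplectic and orthogonal analogues. For instance, in (1), $V\otimes\Lambda^2V=\Lambda^3V\oplus V^{(2,1)}$ as $\mathfrak{sl}(2m)$-modules; neither summand has highest weight $\omega_1$ or $\omega_{2m-1}$ when $m\geq 3$, so $(V\otimes\Lambda^2V)^s=0$. The occurrences of $V$ in products such as $V^*\otimes\Lambda^2V$, $V^*\otimes S^2V$ and $V\otimes\mathrm{ad}$ arise from the natural contraction $V^*\otimes V\to\kk$ applied in one slot, while the trivial summand in $V\otimes V^*$ is the usual trace. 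For (2) and (3), one exploits the self-duality $V\cong V^*$ furnished by the invariant form (Lemma~\ref{lemma-dual-form}(1)--(2)), so that the above contraction becomes $V\otimes V\to\kk$, producing the trivial summand in $(V\otimes V)^s$ and the copies of $V$ in $(V\otimes\mathrm{ad})^s$, $(V\otimes\overline{\Lambda^2V})^s$ and $(V\otimes\overline{S^2V})^s$; all other constituents are higher-weight $\cS_1$-modules ruled out by the spectrum test. The one non-contraction case, $(V\otimes\Gamma^+)^s=0$ for $\mathfrak{so}(4m)$, follows because $V\otimes\Gamma^+$ decomposes into $\Gamma^-$ plus a higher spin representation, neither of which is $V$ or trivial for $m\geq 3$.

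For the spinor cases (4) and (5), the principal tool is the Clifford algebra $C(V)$ together with the classical isomorphisms $C(V)\cong\mathrm{End}(\Gamma)$ (resp.\ $\mathrm{End}(\Gamma^+)\oplus\mathrm{End}(\Gamma^-)$) in the odd (resp.\ even) dimensional case and the $\mathfrak{so}(V)$-equivariant isomorphism $C(V)\cong\bigoplus_r\Lambda^rV$. This yields the fundamental identity $\Gamma\otimes\Gamma^*\cong\bigoplus_r\Lambda^rV$ in the odd case, and its $\mathbb Z_2$-graded refinement $\Gamma^\varepsilon\otimes(\Gamma^{\varepsilon'})^*\cong\bigoplus_{r\equiv\varepsilon-\varepsilon'\!\!\pmod 2}\Lambda^rV$ in the even case. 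Dualising via Lemma~\ref{lemma-dual-form} and applying Frobenius reciprocity then extracts $(\Lambda^rV\otimes\Gamma)^s=\Gamma$ and the analogous chirality-tracked statements in (5), together with the $(\Gamma\otimes\Gamma)^s$ and $(\Gamma^\pm\otimes\Gamma^\pm)^s$ computations, which are governed purely by whether a trivial summand occurs. The main obstacle is the parity bookkeeping in (5): one must combine the chirality swap of $\Gamma^\pm$ driven by the parity of $r$ with the self- or mutual-duality pattern of $\Gamma^\pm$ controlled by $m\bmod 4$ (cases (4)--(5) of Lemma~\ref{lemma-dual-form}), and check that the resulting eight subcases match exactly the $\mathbb Z_2\times\mathbb Z_2$-graded content of $C(V)$; once this is set up, each entry reads off directly.
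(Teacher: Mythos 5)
The paper dispatches this lemma in one line by citing Onishchik--Vinberg \cite{Vin}, Table 5 for all the tensor-product decompositions, after which applying $(\ )^s$ is mechanical. Your proposal instead rebuilds those decompositions from first principles: Pieri/Littlewood--Richardson rules and the contraction map for $\mathfrak{sl}$, self-duality via the invariant form for $\mathfrak{sp}$ and $\mathfrak{so}$, and the $\so(V)$-equivariant isomorphism $C(V)\cong\bigoplus_r\Lambda^r V$ together with $C(V)\cong\End(\Gamma)$ or $\End(\Gamma^+\oplus\Gamma^-)$ for the spinor cases, plus the short-$\mathfrak{sl}_2$ weight test to rule out large constituents a priori. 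This is a genuinely different and more self-contained route; what the paper's citation buys is brevity, and what yours buys is that a reader need not go fetch a table. Both are valid.

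One warning: the displayed ``$\mathbb Z_2$-graded refinement'' $\Gamma^\varepsilon\otimes(\Gamma^{\varepsilon'})^*\cong\bigoplus_{r\equiv\varepsilon-\varepsilon'\!\!\pmod 2}\Lambda^rV$ is off by a factor of two as written (for $\varepsilon=\varepsilon'$ the right-hand side is all of $C^0(V)$, which equals $\End(\Gamma^+)\oplus\End(\Gamma^-)$, not just one summand). The correct refinement sums $r$ over $0\leq r< m$ of the appropriate parity and replaces the middle exterior power by exactly one of $\Lambda^+$ or $\Lambda^-$; one separates the two pieces of $C^0(V)$ via the action of the volume element, whose eigenvalue on $\Gamma^\pm$ is $\pm1$ and whose left multiplication identifies $\Lambda^r V$ with $\Lambda^{2m-r}V$. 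You flag the parity bookkeeping as the main obstacle, which is exactly where this correction lives; once inserted, the eight subcases of part (5) do read off as you describe.
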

\begin{proof} The formulas for tensor products are given in \cite{Vin}, table\ 5, applying $(\ )^{s}$ is straightforward. \end{proof}

\section{Acknowledgments}
The authors are grateful to V. Bekkert for helpful discussions. IK acknowledges financial support from the 
Guangdong Basic and Applied Basic Research Foundation (grant 2024A1515013079) and the NSFC (grant 12350710787),
VS was partially supported by the NSF grant  2019694.

\end{document}